\documentclass[10pt,reqno]{amsart}

\usepackage{amsthm,amsmath,amstext,amssymb,amscd,euscript, mathrsfs, dsfont,multicol,times,enumerate,subfig,sidecap}

\usepackage{comment}
\excludecomment{comm}

\usepackage{enumerate}
\usepackage{amsmath, amssymb, amsthm}
\usepackage{mathrsfs}
\usepackage{esint}
\usepackage{xcolor}
\usepackage{mathtools}
\usepackage{bm}
\usepackage{bbm}

\usepackage[colorlinks=true, pdfstartview=FitV, linkcolor=blue, citecolor=red, urlcolor=blue]{hyperref} 

\usepackage[left=2.5cm,right=2.5cm,top=3cm,bottom=3cm]{geometry}

\newtheorem{thm}{Theorem}[section]
\newtheorem{corollary}[thm]{Corollary}
\newtheorem{lem}[thm]{Lemma}
\newtheorem{prop}[thm]{Proposition}
\theoremstyle{definition}
\newtheorem{defn}[thm]{Definition}
\newtheorem{example}[thm]{Example}
\newtheorem{assumption}[thm]{Assumption}
\newtheorem{rem}[thm]{Remark}

\newcommand\bN{\mathbb{N}}

\newcommand\bR{\mathbb{R}}

\newcommand\cB{\mathcal{B}}

\newcommand\cD{\mathcal{D}}
\newcommand\cF{\mathcal{F}}

\newcommand\cH{\mathcal{H}}

\newcommand\cS{\mathcal{S}}

\newcommand{\p}{\partial}
\DeclareMathOperator*{\esssup}{ess\,sup}

\newcommand\cpar{\text{$[$\kern-.38em$[$}}
\newcommand\cbrk{\text{$]$\kern-.15em$]$}}
\newcommand\opar{\text{\,\raise.2ex\hbox{${\scriptstyle
|}$}\kern-.34em$($}}
\newcommand\obrk{\text{$)$\kern-.34em\raise.2ex\hbox{${\scriptstyle |}$}}\,}

\def\XXint#1#2#3{{\setbox0=\hbox{$#1{#2#3}{\int}$ }
\vcenter{\hbox{$#2#3$ }}\kern-.58\wd0}}

\makeatletter
\def\@tocline#1#2#3#4#5#6#7{\relax
  \ifnum #1>\c@tocdepth 
  \else
    \par \addpenalty\@secpenalty\addvspace{#2}%
    \begingroup \hyphenpenalty\@M
    \@ifempty{#4}{%
      \@tempdima\csname r@tocindent\number#1\endcsname\relax
    }{%
      \@tempdima#4\relax
    }%
    \parindent\z@ \leftskip#3\relax \advance\leftskip\@tempdima\relax
    \rightskip\@pnumwidth plus4em \parfillskip-\@pnumwidth
    #5\leavevmode\hskip-\@tempdima
      \ifcase #1
       \or\or \hskip 1em \or \hskip 2em \else \hskip 3em \fi%
      #6\nobreak\relax
    \dotfill\hbox to\@pnumwidth{\@tocpagenum{#7}}\par
    \nobreak
    \endgroup
  \fi}
\makeatother

\newcommand{\nrm}[1]{\Vert#1\Vert}
\newcommand{\abs}[1]{\left\vert#1\right\vert}
\newcommand{\set}[1]{\left\{#1\right\}}

\newcommand{\tld}[1]{\widetilde{#1}}

\newcommand{\ift}{\infty}
\newcommand{\alp}{\alpha}
\newcommand{\bt}{\beta}
\newcommand{\gmm}{\gamma}

\newcommand{\lmb}{\lambda}

\newcommand{\tht}{\theta}

\newcommand{\Omg}{\Omega}

\newcommand{\bfb}{{\bf b}}
\newcommand{\bfc}{{\bf c}}

\newcommand{\bff}{{\bf f}}
\newcommand{\bfg}{{\bf g}}

\newcommand{\bfB}{{\bf B}}

\newcommand{\bfV}{{\bf V}}

\newcommand{\embed}{\hookrightarrow}

\newcommand{\f}[2]{\frac{#1}{#2}}       

\newcommand{\q}{\text{ }}       
\newcommand{\qd}{\quad }

\newcommand{\bfone}{\mathbf{1}}

\newcommand{\paren}[1]{\left(#1\right)}

\newcommand{\domain}{\mathbb{R}^{d+1}}
\newcommand{\dd}{\mathrm{d} }

\begin{document}

\title[Neural Network Approximation of Solutions to Fractional Parabolic Partial Differential Equations]{Neural Network Approximation of Solutions to Fractional Parabolic Partial Differential Equations}

\author[J.-H. Choi]{Jae-Hwan Choi}
\address{School of Mathematics, Korea Institute for Advanced Study, 85 Hoegi-ro, Dongdaemun-gu, Seoul, 02455, Republic of Korea}
\email{jhchoi@kias.re.kr}
\thanks{}

\author[H. Lim]{Hyojae Lim}
\address{Center for Artificial Intelligence and Natural Sciences, Korea Institute for Advanced Study, 85 Hoegi-ro, Dongdaemun-gu, Seoul, 02455, Republic of Korea}
\email{hlim@kias.re.kr}
\thanks{}

\author[J. Seo]{Jinsol Seo}
\address{School of Mathematics, Korea Institute for Advanced Study, 85 Hoegi-ro, Dongdaemun-gu, Seoul, 02455, Republic of Korea}
\email{seo9401@kias.re.kr}
\thanks{}

\author[Y.-J. Sim]{Young-Jin Sim}
\address{School of Mathematics, Korea Institute for Advanced Study, 85 Hoegi-ro, Dongdaemun-gu, Seoul, 02455, Republic of Korea}
\email{yjsim@kias.re.kr}
\thanks{}

\author[C. Song]{Changhoon Song}
\address{Research Institute of Mathematics, Seoul National University, 1 Gwanak-ro, Gwanak-gu, Seoul, 08826, South Korea}
\email{changhoon.song93@snu.ac.kr}
\thanks{}

\subjclass[2020]{68T07, 65N99, 35K30, 26A33}

\keywords{Parabolic PDE, Anisotropic spectral Barron space, Neural network, Regularity theory, Approximation theory}

\begin{abstract}
We establish a dimension-efficient neural network approximation theory for solutions to fractional parabolic equations with lower-order drift and potential terms. By introducing \textit{anisotropic spectral Barron spaces}, which measure temporal and spatial regularity separately in frequency space, we first develop a dimension-independent maximal regularity theory for these equations, using dimension-independent multiplication estimates and the method of continuity to incorporate the lower-order terms.
A key technical novelty is the application of the Vandermonde matrix to the global-in-time extension of the finite-time fractional heat semigroup with sufficient regularity at the initial time, thereby enabling analysis of the forward-in-time evolution via the global space–time Fourier structure of anisotropic Barron norms.
We also show that a corresponding uniform-in-time estimate of the spectral Barron regularity generally fails. 
Finally, we derive $n^{-1/2}$ two-layer approximation bounds in mixed Sobolev norms for non-constant periodic activations and, under additional anisotropic Barron regularity, for non-periodic activations satisfying a polynomial-decay condition.
\end{abstract}

\maketitle
\pagestyle{headings}

\section{Introduction}

Barron \cite{MR1237720}, in 1993, initiated a quantitative viewpoint on neural network approximation beyond the classical universal approximation theory. While density-type results show that neural networks with suitable activation functions can approximate large classes of functions \cite{cybenko1989approximation,hornik1989multilayer,leshno1993multilayer}, they do not identify when such approximation can be achieved with a dimension-efficient number of parameters. Barron addressed this issue by considering superpositions 
$$
u_n(x)
=
\sum_{j=1}^n a_j \sigma(w_j\cdot x+b_j)
$$ of sigmoidal ridge functions $\sigma$, 
and by proving approximation bounds
$$
\|u-u_n\|_{L^2(\Omega)}
\lesssim  n^{-1/2}\int_{\mathbb R^d} |\xi|\,|\widehat u(\xi)|\,\dd \xi,
$$
where the constant depends only on the size of $\Omega$ (for instance, its diameter or volume) and has no additional dependence on the ambient dimension $d$. The dependence on $u$ is captured by the weighted $L^1$-norm of $\hat{u}$ on the right-hand side, which is controlled by the Barron norm of $u$.
Consequently, for functions with bounded Barron norm, the approximation rate is governed by the number of neurons $n$, without exponential growth in $d$.
This dimension independence is particularly important in high dimensions,  where approximation bounds based on Sobolev or H\"older regularity reflect the curse of dimensionality; see \cite{NEURIPS2022_8caa10fb, yarotsky2017error}.

Barron's quantitative result opened the possibility of explaining, in analytic terms, why neural networks can approximate high-dimensional PDE solutions efficiently. This naturally required the search for neural-network-adapted function spaces and PDE regularity theories showing that solution operators preserve or improve the corresponding structure. 

This analytic program has led to systematic studies of neural-network representation spaces, which formalize function classes admitting efficient representation or approximation by neural-network ridge-function superpositions. 
These works investigated their Banach-space structure, representation formulas, and approximation properties
\cite{MR4376565,e2020banach,e2022representation,lu2025approximation,siegel2020approximation,MR4357282}.

Fourier-analytic refinements of Barron-type spaces, defined through weighted $L^1$-norms of the Fourier transform (see, \textit{e.g.}, Definitions~\ref{Barron} and \ref{def:B_para}),
have been investigated in \cite{MR4931876,MR4538591,song2026sobolev,MR4734714,zhao2026logarithmic}.
In the polynomially weighted case, the order of the weight reflects the
corresponding regularity level, thereby linking these spaces with classical smoothness scales such as Sobolev or Besov spaces.

In parallel, Barron-space regularity and neural network approximation results have been established for several elliptic, Schr\"odinger-type, fractional, and nonlinear static PDEs \cite{chen2026regularity,chen2021representation,MR4551470,choulli2025functional,MR5033018,MR4993598,marwah2023neural}. 
For parabolic equations, an important model case was established by E and Wojtowytsch \cite{e2022observations}, who proved the inheritance of Barron regularity for the whole-space evolution equation via its heat-kernel representation. Their result demonstrates the compatibility between parabolic evolution and Barron regularity in a translation-invariant setting. 

However, a systematic regularity theory is still needed for fractional parabolic equations with lower-order terms, for which no comparable explicit representation formula is generally available. The main structural feature is the intrinsic anisotropic coupling between temporal and spatial frequencies: for the fractional heat operator \(\partial_t-\Delta^{\gamma/2}\), one temporal derivative corresponds to \(\gamma\) spatial derivatives.

Our main goal is therefore to develop a systematic space-time Barron framework that captures this anisotropic structure. More precisely, motivated by the existing theory of \textit{spectral} Barron spaces, of which a representative norm is
$$
\|u\|_{\mathcal B^\alpha(\mathbb R^d)}
:=
\int_{\mathbb R^d}
(1+|\xi|^\alpha)
|\hat u(\xi)|\,\dd\xi,
$$ we will introduce the \textit{anisotropic} spectral Barron space $\mathcal B^{\alpha,\beta}(T)$, which separately measures $\alpha$-order regularity in the time variable $t\in[0,T]$ and $\beta$-order regularity in the spatial variable $x\in\mathbb R^d$. We then consider the fractional parabolic equations with lower-order terms
\begin{equation}\label{eq: intro target PDE}
    \begin{cases}
        \partial_t v= \Delta^{\gamma/2} v + \paren{\bfb\cdot \nabla v}\bfone
        _{\set{\gmm>1}} + cv + f,
        & \qd\text{in }(0,T)\times \mathbb{R}^d,
        \\
        v(0,\cdot)=v_0
        & \qd\text{in }  \mathbb{R}^d,
    \end{cases}
\end{equation}
and establish the regularity gain
$$
    v_0\in\mathcal B^{\gmm+s}(\bR^d)
    \qd\text{and}\qd 
    f\in \mathcal B^{s/\gamma,s}(T)
    \quad\Longrightarrow\quad
    v\in \mathcal B^{1+s/\gamma,\gamma+s}(T).
$$ Here, one obstacle is that the evolution of the solution is posed only forward in time on \([0,T]\), whereas the anisotropic spectral Barron norm is defined through a global space-time Fourier extension. We overcome this mismatch by extending the fractional heat evolution across the initial time through a Vandermonde-based reflection procedure that preserves the required temporal regularity.

Next, we combine the preceding regularity theory with neural network approximation results to obtain dimension-efficient space-time approximations of the form
\begin{equation}\label{eq: intro neural network}
    v_n(t,x)
    =
    \sum_{j=1}^n c_j
    \sigma(W_{t,j}t+W_{x,j}\cdot x+b_j),
    \qquad (t,x)\in [0,T]\times \Omega,
\end{equation}
satisfying an estimate of the schematic form
$$\|v-v_n\|_{H^\alp((0,T);L^2(\Omg))}+
\|v-v_n\|_{L^2((0,T);H^\bt(\Omg))}
\lesssim n^{-1/2}\|v\|_{\mathcal B^{\alpha,\beta}(T)}.$$

\subsection{Main results}
\label{sec:main}
Before we introduce our main results, we first recall the standard definition of a spectral Barron space.
\begin{defn}[Spectral Barron space]
\label{Barron}
Let $\alp\in[0,\ift)$. The spectral Barron space $\cB^\alp(\bR^N)$ is defined as the set of tempered distributions $v\in\cS'(\bR^N)$ whose Fourier transform $\cF_N[v](\xi)$ (see Remark~\ref{rem:fourier of S'} for its justification) is locally integrable and satisfies 
$$\nrm{v}_{\cB^\alp(\bR^N)}:=\int_{\bR^N}\paren{1+|\xi|^\alp}|\cF_N[v](\xi)|\,\dd\xi\,<\,\ift.$$
\end{defn}
As an extension, we introduce the anisotropic spectral Barron space. \begin{defn}[Anisotropic spectral Barron space] \label{def:B_para} Let $\alpha,\beta \in [0,\infty)$. 
The \textit{anisotropic spectral Barron space} $\cB^{\alpha,\beta}(\mathbb{R}^{d+1})$ is defined as the set of tempered distributions $v \in \mathcal{S}'(\mathbb{R}^{d+1})$ whose space-time Fourier transform $\mathcal{F}_{d+1}[v](\tau, \xi)$ is locally integrable and satisfies 
\begin{equation} \label{eq:B_para_norm}
    \|v\|_{\cB^{\alpha,\beta}(\mathbb{R}^{d+1})} := \int_{\mathbb{R}} \int_{\mathbb{R}^d} \paren{1 + |\tau|^{\alpha} + |\xi|^{\beta}} \abs{\mathcal{F}_{d+1}[v](\tau, \xi)} \, \mathrm{d}\xi \mathrm{d}\tau < \infty,
\end{equation}
where $\tau \in \mathbb{R}$ and $\xi \in \mathbb{R}^d$ denote the frequency variables dual to time $t\in\bR$ and space $x\in\bR^d$, respectively.

Furthermore, for a finite time interval $[0, T]$, we define the restriction space {$\cB^\alp(T)$} as the space of functions~$v$ on $[0, T]$ that admits a globally defined extension $V$ in $\cB^\alp(\mathbb{R})$. The restriction norm is given by taking the infimum over all such extensions:
$$
\|v\|_{\cB^{\alpha}(T)}:= \inf \left\{ \|V\|_{\cB^{\alpha}(\mathbb{R})} \;\middle|\; V \in \cB^{\alpha}(\mathbb{R}) \text{ and } V|_{[0,T]} = v \right\}.
$$
Similarly, we define the space {$\cB^{\alp,\bt}(T)$} as the space of function $v$ on $[0,T]\times\bR^d$ that admit an extension $V \in \cB^{\alp,\bt}(\mathbb{R}^{d+1})$, equipped with the restriction norm
\begin{equation} \label{eq:B_para_restriction}
    \|v\|_{\cB^{\alpha,\beta}(T)} := \inf \left\{ \|V\|_{\cB^{\alpha,\beta}(\mathbb{R}^{d+1})} \;\middle|\; V \in \cB^{\alpha,\beta}(\mathbb{R}^{d+1}) \text{ and } V|_{[0,T]\times\mathbb{R}^d} = v \right\}.
\end{equation}
\end{defn}

Our main result is a neural network approximation bound for the solution $v\in\cB^{1+s/\gmm,\gmm+s}(T)$ to the equations \eqref{eq: intro target PDE}, which takes the form
$$\begin{aligned}
    \|v-v_n\|_{H^{1+s/\gamma}((0,T);L^2(\Omg))}+&
    \|v-v_n\|_{L^2((0,T);H^{\gamma+s}(\Omg))}\\&\leq
    C n^{-1/2}\left(
    \|v_0\|_{\cB^{\gmm+s}(\mathbb R^d)}
    +
    \|f\|_{\cB^{s/\gmm,s}(T)}
    \right),
\end{aligned}$$
where $v_n$ is a two-layer approximation of the form \eqref{eq: intro neural network}, and $C$ is a constant depending only on $T,\Omega,s,\gmm,\nrm{b}_{\cB^{s/\gmm,s}(T)},\nrm{c}_{\cB^{s/\gmm,s(T)}}$, but independent of the spatial dimension $d$.
This is a direct consequence of two theorems that follow: the PDE regularity estimate in Theorem~\ref{thm: intro PDE estimate} and the neural network approximation result in Theorem~\ref{thm: intro neural network approximation}:

\begin{thm}[Regularity estimate]
\label{thm: intro PDE estimate}
Let $T,\gmm\in(0,\ift)$ and $s\in [0,\infty)$. For any $v_0\in\cB^{\gmm+s}(\bR^d)$ and $b_1,\dots,b_d,c,f\in \cB^{s/\gmm,s}(T)$ with $\bfb:=(b_1,\dots,b_d)$, there exists a unique solution $v\in\cB^{1+s/\gmm,\gamma+s}(T)$ to the equation \eqref{eq: intro target PDE}
satisfying the estimate
\begin{equation}
\|v\|_{\cB^{1+s/\gmm,\gmm+s}(T)}
\leq
C(T,s,\gmm,\nrm{\bfb},\nrm{c})\left(
\|v_0\|_{\cB^{\gmm+s}(\mathbb R^d)}
+
\|f\|_{\cB^{s/\gmm,s}(T)}
\right),
\end{equation} where 
$$\nrm{\bfb}:=\paren{\sum_{k=1}^d\nrm{b_k}^2_{\cB^{s/\gmm,s}(T)}}^{1/2}\qd\text{and}\qd
\nrm{c}:=\nrm{c}_{\cB^{s/\gmm,s}(T)}.$$
\end{thm}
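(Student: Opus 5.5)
We prove the estimate in three stages: the model equation without lower-order terms, a multiplication estimate, and the method of continuity.

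\emph{Stage 1: the model problem.} Take $\bfb=0$ and $c=0$. Write $v=v^{(1)}+v^{(2)}$, where $v^{(1)}$ solves the homogeneous problem with data $v_0$ and $v^{(2)}$ solves the inhomogeneous problem with zero initial data.

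For $v^{(2)}$, choose an extension $F\in\cB^{s/\gmm,s}(\bR^{d+1})$ of $f$ with nearly minimal norm. Multiplying by a smooth temporal cutoff is harmless in this space, since it convolves $\cF_{d+1}[F]$ in $\tau$ with an $L^1$ function and $1+|\tau|^{s/\gmm}$ is submultiplicative up to constants. Then define the causal solution
\[
V_2=\int_{-\infty}^t e^{-(t-r)\lambda(-\Delta)^{\gmm/2}}F(r)\,\dd r,
\]
where $\lambda>0$ is a damping parameter; the shift $e^{\lambda t}$ is absorbed on $[0,T]$ at cost $e^{\lambda T}$. In Fourier variables, $\cF[V_2]=\cF[F]/(i\tau+|\xi|^\gmm+\lambda)$. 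The elementary bound
\[
\frac{(1+|\tau|^{1+s/\gmm}+|\xi|^{\gmm+s})}{|i\tau+|\xi|^\gmm+\lambda|}\lesssim 1+|\tau|^{s/\gmm}+|\xi|^s
\]
holds with constants independent of $d$. It gives the bound for $V_2$. To enforce the zero initial datum, cut off $F$ to vanish for $t<0$, which uses $F\mathbf 1_{t>0}$. Better, we avoid the cutoff and subtract the homogeneous solution with datum $V_2(0)$. For this we need $V_2(0)\in\cB^{\gmm+s}(\bR^d)$, which holds because $\int|\xi|^{\gmm+s}\bigl|\int \cF[V_2]\,\dd\tau\bigr|\dd\xi\le \|V_2\|_{\cB^{1+s/\gmm,\gmm+s}}$. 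This reduces Stage 1 to the homogeneous problem.

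For $v^{(1)}=e^{-t(-\Delta)^{\gmm/2}}v_0$, we need a global-in-time extension $V_1$. Its $\tau$-Fourier transform should be integrable against $1+|\tau|^{1+s/\gmm}+|\xi|^{\gmm+s}$, with integral at most $C\,(1+|\xi|^{\gmm+s})|\hat v_0(\xi)|$. For $t<0$, set
\[
V_1(t,\xi)=\chi(t)\sum_{k=1}^{m}a_k\, e^{k t|\xi|^\gmm}\hat v_0(\xi),
\]
where $\chi$ is a cutoff (or the decay is built into the exponentials). Choose $m>1+s/\gmm$ and determine the coefficients $a_k$ by a Vandermonde system, so that the derivatives $\partial_t^j$ for $j<m$ match those of $e^{-t|\xi|^\gmm}$ at $t=0$. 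The matching conditions read $\sum_k a_k k^j=(-1)^j$. This glues $V_1$ in $C^{m-1}$ across $t=0$, and the constants $a_k$ are independent of $\xi$ and $d$. A scaling $t|\xi|^\gmm\mapsto\tau/|\xi|^\gmm$ then computes the one-dimensional $\cB^{1+s/\gmm}(\bR_t)$ norm of each $\xi$-slice as $\lesssim(1+|\xi|^{\gmm+s})$, using the fractional regularity of a $C^{m-1}$ piecewise exponential. A low-frequency damping $e^{-\lambda|t|}$ handles $\xi$ near $0$. \textbf{This stage is the main obstacle}: it requires checking that the matched jump structure yields $|\tau|^{-m-1}$ decay uniformly after rescaling.

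\emph{Stage 2: multiplication.} We show that for $\theta\ge0$ and any $g,w$,
\[
\|gw\|_{\cB^{\theta/\gmm,\theta}}\lesssim \|g\|_{\cB^{\theta/\gmm,\theta}}\|w\|_{\cB^{\theta/\gmm,\theta}},
\]
via Young's inequality on the Fourier side and $|\tau|^a\lesssim|\tau-\tau'|^a+|\tau'|^a$. Because the $L^1$-Fourier norm is an algebra with dimension-free constants, this yields
\[
\|\bfb\cdot\nabla v+cv\|_{\cB^{s/\gmm,s}(T)}\le C(\|\bfb\|+\|c\|)\,\|v\|_{\cB^{s/\gmm,s+1}}.
\]
The $d$ terms are combined by Cauchy--Schwarz in the form $\sum_k|\hat b_k||\xi_k|\le(\sum|\hat b_k|^2)^{1/2}|\xi|$ pointwise in frequency. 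This is not directly the $\ell^2$ of norms, so we use Minkowski to get $\sum_k\int|\hat b_k|\,|\xi_k|\le\int(\sum_k|\hat b_k|^2)^{1/2}\cdots$. If needed, we also use $\nrm{\bfb}$ bounding the vector-valued $L^1(\ell^2)$ norm after a nearly optimal choice of extensions.

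Since $\gmm>1$ whenever drift is present, interpolation gives
\[
\|v\|_{\cB^{s/\gmm,s+1}}\le\varepsilon\|v\|_{\cB^{1+s/\gmm,\gmm+s}}+C_\varepsilon\|v\|_{\cB^{s/\gmm,s}}.
\]
The lower-order term is controlled through the damping parameter $\lambda$ (equivalently, a short-time or $e^{-\lambda t}$ weighting), at cost $\lambda^{-1}$.

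\emph{Stage 3: method of continuity.} Consider the operators $L_\mu=\partial_t-\Delta^{\gmm/2}-\mu(\bfb\cdot\nabla+c)$ for $\mu\in[0,1]$. Stages 1 and 2 give an a priori estimate that is uniform in $\mu$. Stage 1 gives solvability at $\mu=0$. The method of continuity then gives existence for $\mu=1$. Uniqueness follows from the a priori estimate applied to the difference of two solutions.
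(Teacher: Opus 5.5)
Your Stages~1 and~3 follow the paper's route: Vandermonde reflection of the semigroup, the multiplier $(\lambda+\mathrm{i}\tau+|\xi|^\gamma)^{-1}$ with a homogeneous correction through the trace, and the method of continuity. The a priori estimate in Stage~2, however, does not close as sketched.

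You propose to control the leftover norm $\|v\|_{\cB^{s/\gamma,s}(T)}$ by damping ``at cost $\lambda^{-1}$'' at the top level $s$. On $[0,T]$ with prescribed initial data this gain is not available. The finite-time damped estimate (Theorem~\ref{thm:finite_mr_unshifted}) only gives
\[
\lambda^{1-\theta}\|v\|_{\cB^{\theta+s/\gamma,\theta\gamma+s}(T)}\le C\,(1+\lambda^{s/\gamma})\,\|f\|_{\cB^{s/\gamma,s}(T)},
\]
and the factor $\lambda^{s/\gamma}$ cannot be removed. Take $f=\phi(x)$ constant in time, with $\cF_d[\phi]$ compactly supported, and $v(0,\cdot)=0$. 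Then $\lambda v\approx(1-\mathrm{e}^{-\lambda t})\phi$ has a boundary layer of width $\lambda^{-1}$ at $t=0$. Testing the time-H\"older embedding of Proposition~\ref{prop:B-to-Ctx} shows that its $\cB^{s/\gamma,s}(T)$-norm is at least of order $\lambda^{s/\gamma}$. So absorbing the zero-order terms needs $\lambda\gg\lambda^{s/\gamma}$, essentially $s<\gamma$; absorbing the drift directly through the $\theta=1/\gamma$ bound even needs $s<\gamma-1$.

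The missing idea is the paper's finite bootstrap. Apply the undamped estimate of Theorem~\ref{thm: inhomo. heat} at the levels $s_i=s-i\gamma'$, with $\gamma'=\gamma-\mathbf{1}_{\{\gamma>1\}}$. By Proposition~\ref{260516347}, the lower-order terms at level $s_i$ are bounded by $(\nrm{\bfb}+\nrm{c})\|v\|_{\cB^{1+s_{i+1}/\gamma,\gamma+s_{i+1}}(T)}$. So nothing has to be absorbed at these levels, and after finitely many steps everything reduces to $\|v\|_{\cB^{1,\gamma}(T)}$. Only there does one damp, via $z=\mathrm{e}^{-\lambda t}(v-v_0)$, because for $s=0$ the constant in \eqref{2607201144} is uniform in $\lambda$.

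Your $\varepsilon$-interpolation is also applied to restriction norms on $[0,T]$. Proposition~\ref{2607201245} is proved only on $\bR^{d+1}$, and near-optimal extensions for two different norms need not coincide, so that step needs justification. The paper avoids it: it takes the bound on $\lambda^{1-1/\gamma}\|z\|_{\cB^{1/\gamma,1}(T)}$ directly from the $\theta=1/\gamma$ case of \eqref{2607201144}. That case is proved for an explicitly extended solution and transfers to $z$ by uniqueness.

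There are also two problems in your product estimate. First, $\|\partial_{x_k}v\|_{\cB^{s/\gamma,s}}$ involves the mixed weight $|\tau|^{s/\gamma}|\xi|$, which $\cB^{s/\gamma,s+1}$ does not control. Young's inequality requires $\cB^{(s+1)/\gamma,s+1}$; this is the $\theta=1/\gamma$ intermediate space, so the fix is easy.

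Second, and this matters for the dimension-free constant, your pointwise Cauchy--Schwarz produces the constant $\int w\,(\sum_k|\cF_{d+1}[B_k]|^2)^{1/2}$, where $w:=1+|\tau|^{s/\gamma}+|\xi|^{s}$. Minkowski's inequality runs the other way from what you assert. This $L^1(\ell^2)$ quantity is bounded below by $(\sum_k\|B_k\|^2_{\cB^{s/\gamma,s}(\bR^{d+1})})^{1/2}$, hence by $\nrm{\bfb}$, and it can exceed it by a factor $\sqrt d$ (take the $\cF_{d+1}[B_k]$ with disjoint supports). So your constant would depend on $d$, which the statement excludes.

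The correct order is as follows. Estimate $\|B_k\partial_{x_k}V\|\lesssim\|B_k\|\,\|\partial_{x_k}V\|$, then apply Cauchy--Schwarz to the resulting sum of norms over $k$. Use Minkowski only on the $V$-side, $(\sum_k\|\partial_{x_k}V\|^2)^{1/2}\le\int|\xi|\,w\,|\cF_{d+1}[V]|$. There the common scalar factor $|\cF_{d+1}[V]|$ lets the $\ell^2$-sum of the $\xi_k$ collapse to $|\xi|$.
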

We emphasize that the estimate above is formulated in the space-time Barron space $\mathcal{B}^{\alpha,\beta}(T)$.
In general, an analogous uniform-in-time estimate in $L^{\infty}((0,T);\mathcal B^{s}(\mathbb R^d))$ fails; see Section~\ref{26.06.07.14.31} for details.\medskip

The following theorem yields simultaneous approximation in temporal and spatial Sobolev norms with rate $n^{-1/2}$ using the sinusoidal activation. Generalizations to non-constant periodic activations and to non-periodic activations satisfying a polynomial-decay condition are given in Theorems~\ref{thm:periodic-activation-barron-approx}  and~\ref{thm:barron-mixed-sobolev-approx}, respectively.

\begin{thm}[Special case of Theorem~\ref{thm:periodic-activation-barron-approx}]
\label{thm: intro neural network approximation}
    Let $T\in\left(0,\infty\right)$, $\alpha,\beta\in\left[0,\infty\right)$, and $\Omega\subset\bR^d$ be a bounded Lipschitz domain. For any real-valued $u\in \cB^{\alpha,\beta}(T)$ and $n\in\bN$, there exists a two-layer neural network of the form 
    $$
        u_n(t,x)=\sum_{i=1}^n \bar{c}_{i}^n\sin\left(\bar{W}_{t,i}^nt+\bar{W}_{x,i}^n\cdot x+\bar{b}_i^n\right),
        \qquad (t,x)\in[0,T]\times \Omega,
    $$
    where $\bar{c}_i^n,\bar{W}_{t,i}^n,\bar{b}_i^n\in\bR$ and $\bar{W}_{x,i}^n\in\bR^d$, such that
    \begin{equation}\begin{aligned}
    \label{eq:cor-single-network-estimate}
        \|u-u_n\|_{H^\alpha((0,T);L^2(\Omega))}
        +
        \|u-u_n\|_{L^2((0,T);H^\beta(\Omega))}\leq C n^{-1/2}
        \|u\|_{\mathcal{B}^{\alpha,\beta}(T)}.
    \end{aligned}\end{equation}
    Here, the constant $C$ is independent of $n$ and $u$.
\end{thm}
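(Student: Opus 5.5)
Fix $\varepsilon>0$ and choose an extension $U\in\cB^{\alp,\bt}(\bR^{d+1})$ of $u$ with $\nrm{U}_{\cB^{\alp,\bt}(\bR^{d+1})}\le(1+\varepsilon)\nrm{u}_{\cB^{\alp,\bt}(T)}$. We may take $\varepsilon=1$, since the final bound only needs a constant. Write $\hat U:=\cF_{d+1}[U]$, which lies in $L^1$ because of the weight $1$ in \eqref{eq:B_para_norm}. Fourier inversion then gives, for a.e.\ $(t,x)$,
\[
U(t,x)=\int_{\bR^{d+1}}|\hat U(\tau,\xi)|\cos\bigl(2\pi(\tau t+\xi\cdot x)+\theta(\tau,\xi)\bigr)\,\dd\xi\,\dd\tau ,
\]
where $\theta$ is the phase of $\hat U$; the imaginary part vanishes because $u$ is real. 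We have $\cos(z+\theta)=\sin(z+\theta+\pi/2)$. Set $w(\tau,\xi):=1+|\tau|^\alp+|\xi|^\bt$ and define the probability measure $\mu:=Z^{-1}w|\hat U|\,\dd\xi\,\dd\tau$, with $Z=\nrm{U}_{\cB^{\alp,\bt}(\bR^{d+1})}$. Then
\[
U=Z\,\mathbb E_{(\tau,\xi)\sim\mu}\bigl[g_{\tau,\xi}\bigr],\qquad g_{\tau,\xi}(t,x):=\frac{\sin\bigl(2\pi(\tau t+\xi\cdot x)+\theta+\pi/2\bigr)}{w(\tau,\xi)} ,
\]
and each $g_{\tau,\xi}$ is a single sine neuron with a bounded coefficient.

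Next we carry out a Maurey-type sampling in a Hilbert space. Let $\cH:=H^\alp((0,T);L^2(\Omg))\cap L^2((0,T);H^\bt(\Omg))$ with the sum norm. The key estimate is a uniform bound
\[
\nrm{g_{\tau,\xi}}_{\cH}\le C(T,\Omg,\alp,\bt),
\]
with $C$ independent of $(\tau,\xi)$ and of $d$. To obtain it, note that the function $t\mapsto\sin(2\pi\tau t+c)$ has $H^\alp(0,T)$ norm at most $C_T(1+|\tau|^\alp)$. For integer $\alp$ this follows by direct differentiation; for fractional $\alp$ it follows by interpolation or via the Slobodeckij seminorm. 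Likewise, $x\mapsto\sin(2\pi\xi\cdot x+c)$ has $H^\bt(\Omg)$ norm at most $C|\Omg|^{1/2}(1+|\xi|^\bt)$ for integer $\bt$. Fractional $\bt$ is handled by interpolation, or by restricting from a ball containing $\Omg$ and using the Lipschitz extension and restriction characterization. The resulting constant must depend on $\Omg$ only through its diameter and volume, in order to remain dimension-honest. Dividing by $w$ gives the uniform bound. Then, for i.i.d.\ samples $(\tau_i,\xi_i)\sim\mu$,
\[
\mathbb E\Bigl\|U-\frac Zn\sum_{i=1}^n g_{\tau_i,\xi_i}\Bigr\|_{\cH}^2\le\frac{Z^2}{n}\sup\nrm{g}_{\cH}^2 .
\]
Consequently some realization achieves error at most $CZn^{-1/2}$. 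This realization has exactly the form claimed, with $\bar c_i=Z/(n\,w)$, $\bar W_{t,i}=2\pi\tau_i$, $\bar W_{x,i}=2\pi\xi_i$ and $\bar b_i=\theta+\pi/2$. Since $U=u$ on $[0,T]\times\Omg$, this proves \eqref{eq:cor-single-network-estimate}.

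Two technical points need care. The first is the Bochner integrability of $(\tau,\xi)\mapsto g_{\tau,\xi}$ as an $\cH$-valued map, and the identification of the Bochner integral with $U$ restricted to $(0,T)\times\Omg$. Continuity in $(\tau,\xi)$ in the $\cH$ norm, together with the uniform bound, handles this; the identification follows by testing against $C_c^\infty$ functions. The second point, which I expect to be the main obstacle, is the fractional-order Sobolev bound on the Lipschitz domain $\Omg$ with constants uniform in $\xi$, and to a lesser extent in $\tau$ on $(0,T)$. I would first try the Gagliardo seminorm directly. The bound $|e^{i\xi\cdot x}-e^{i\xi\cdot y}|\le\min(2,|\xi||x-y|)$ reduces the estimate to an explicit integral over $\Omg\times\Omg$, which gives the growth $|\xi|^{\bt}$ for $0<\bt<1$. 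Higher orders then follow by differentiating and applying the same bound to the top derivative.
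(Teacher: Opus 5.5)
Your argument is correct and follows the architecture of the paper's proof of Theorem~\ref{thm:periodic-activation-barron-approx}: Fourier inversion of a near-optimal extension, a probability measure proportional to $w\,|\cF_{d+1}[U]|$, a uniform bound on the atoms in $\cH$, and Lemma~\ref{lem:hilbert-sampling}. It differs in two places.

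\textbf{Representation.} The paper handles an arbitrary non-constant periodic $\sigma$. It picks a nonzero Fourier coefficient $a_{k_0}$ and writes $\mathrm{e}^{\mathrm{i}k_0y}=(Pa_{k_0})^{-1}\int_0^P\sigma(y+b)\mathrm{e}^{-\mathrm{i}k_0b}\,\dd b$. It then samples complex atoms in the enlarged variable $(\tau,\xi,b)$, with $b$ uniform on $[0,P]$, and takes real parts only at the end. You instead use that $u$ is real to get a cosine representation from the start, and absorb the phase of $\cF_{d+1}[U]$ into the bias via $\cos z=\sin(z+\pi/2)$. So no auxiliary bias variable is needed, and the biases are deterministic functions of the sampled frequencies. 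This is more elementary, but it is tied to $\sigma=\sin$, whereas the paper's identity covers all periodic activations at once.

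\textbf{Fractional-order atom bounds.} The paper proves integer-order derivative bounds by the chain rule and interpolates with Corollary~\ref{cor:lipschitz-domain-sobolev-interpolation}, which rests on Stein extension. Your Gagliardo route also works. However, it needs the equivalence between the intrinsic Slobodeckij norm on $\Omg$ and the restriction norm of Definition~\ref{def:Hilbert-valued-Sobolev}, which is again an extension theorem. The step you call the main obstacle is easiest through the restriction norm itself. Take $\chi\in C_c^\infty(\bR^d)$ with $\chi=1$ on $\Omg$. Then $\cF_d[\chi\mathrm{e}^{\mathrm{i}\xi\cdot x}](\zeta)=\cF_d[\chi](\zeta-\xi)$, and Peetre's inequality $1+|\zeta|^2\le 2(1+|\xi|^2)(1+|\zeta-\xi|^2)$ gives $\|\mathrm{e}^{\mathrm{i}\xi\cdot x}\|_{H^\bt(\Omg)}\le 2^{\bt/2}(1+|\xi|^2)^{\bt/2}\|\chi\|_{H^\bt(\bR^d)}$ for every real $\bt\ge0$. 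The same argument works in $t$ on $(0,T)$ for $L^2(\Omg)$-valued functions. This needs no interpolation, and not even the Lipschitz property of $\Omg$.

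\textbf{Small repairs.}
\begin{itemize}
\item The extension $U$ need not be real-valued, so your cosine formula holds only on $[0,T]\times\Omg$. That is all you use. Alternatively, replace $U$ by $\operatorname{Re}U$, whose norm is no larger because the weight is even.
\item The variance identity in the sampling lemma needs the Hilbert norm $\bigl(\|v\|^2_{H^\alp((0,T);L^2(\Omg))}+\|v\|^2_{L^2((0,T);H^\bt(\Omg))}\bigr)^{1/2}$ rather than the sum norm. The two are equivalent, so this costs only a factor $\sqrt2$.
\item The phase $\theta$ is only measurable, so $(\tau,\xi)\mapsto g_{\tau,\xi}$ is not continuous in general. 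Strong measurability comes from continuity in $(\tau,\xi,\theta)$ composed with the measurable $\theta$.
\item Your $2\pi$-in-the-exponent Fourier convention differs from Definition~\ref{def:fourier}; this affects only constants.
\item The ``dimension-honest'' requirement is not part of the statement. Its constant may depend on $\Omg$, and hence on $d$.
\end{itemize}
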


\subsection{Related work}

Following Barron's original approximation theorem \cite{MR1237720}, a substantial literature has developed around function spaces that quantify neural-network representational complexity. Barron spaces and related flow-induced spaces were studied in \cite{MR4376565}, while Banach spaces associated with infinite-width multi-layer ReLU networks were introduced and analyzed in \cite{e2020banach}. Representation formulas and pointwise properties of Barron functions were investigated in \cite{e2022representation}. Quantitative approximation rates for shallow neural networks with general activation functions, together with higher-order results for cosine and ${\rm ReLU}^k$ activations, were obtained in \cite{siegel2020approximation,MR4357282};
these results also show that non-periodic activations lead to additional Barron-type regularity requirements for Sobolev approximation.
More recent work addresses activation dependence and optimality of such rates \cite{lu2025approximation}. 
A Fourier-analytic branch of this theory is provided by Barron spectrum and spectral Barron spaces, whose approximation properties and relations with classical Sobolev, Besov, and Bessel potential spaces have been studied in \cite{MR4931876,MR4538591,MR4734714}. Logarithmic variants have also been introduced to capture borderline or weaker regularity assumptions \cite{song2026sobolev,zhao2026logarithmic}. These developments provide the function-space framework underlying the present work.

In parallel, neural network methods have been applied directly to high dimensional evolution equations. Deep learning algorithms for high-dimensional parabolic PDEs and backward stochastic differential equations were developed in \cite{e2017deep}. For certain semilinear evolution equations, polynomial complexity bounds in both the spatial dimension and the reciprocal of the target accuracy were proved for ReLU neural networks in \cite{hutzenthaler2020proof}. Space-time neural network approximations, in which a solution is approximated on the full cylinder $[0,T]\times[a,b]^d$, were studied in \cite{hornung2025space}. These results demonstrate that neural networks may avoid the curse of dimensionality for important classes of parabolic problems. They do not, however, by themselves identify an intrinsic regularity class of the solution that explains why such parameter-efficient approximation is possible. Establishing such a class requires a PDE regularity theory in a function space adapted to neural network approximation.

For static PDEs, several results have begun to provide this analytic connection. Chen, Lu, and Lu \cite{chen2021representation} proved a Barron-space representation result for linear second-order elliptic equations, obtaining approximation in the $H^1$ norm under Barron-type assumptions on the coefficients and source term. For the whole-space static Schr\"odinger equation, Chen, Lu, Lu, and Zhou \cite{MR4551470} established the spectral Barron regularity theory, under suitable assumptions on the potential. 
This viewpoint has been extended to more general second-order elliptic equations \cite{chen2026regularity} and to fractional Schr\"odinger equations \cite{MR4993598}, where the nonlocal operator yields the corresponding fractional gain in spectral Barron regularity. Functional-analytic properties of spectral Barron spaces, including duality, embeddings, and boundary value problems, were studied in \cite{choulli2025functional}. Related results for nonlinear elliptic variational problems and static Hamilton--Jacobi--Bellman equations were developed in \cite{MR5033018,marwah2023neural}. Collectively, these works show that Barron-type spaces can support a regularity theory in which the PDE solution operator preserves or improves the regularity.

The parabolic precedent closest to the current paper is the work of E and Wojtowytsch \cite{e2022observations}. Using explicit representation formulas, they showed that Barron regularity of the data is inherited by solutions of several translation-invariant equations on the whole space, including the heat equation and the screened Poisson equation. They also treated a viscous Hamilton--Jacobi equation in a deeper tree-like function space and exhibited examples showing that Barron regularity is not automatic, particularly when boundary effects destroy the ridge-function structure. Their heat-equation result is an important model case: it demonstrates that parabolic evolution is compatible with Barron regularity when the solution is represented explicitly by the heat kernel. At the same time, this argument is tied to a translation-invariant equation without lower-order terms and does not furnish a maximal-regularity framework for more general parabolic operators.

\subsection{Proof overview and organization}

 To fill the analytic gap identified above, we develop a maximal-regularity theory for fractional parabolic initial-value problems with lower-order terms in anisotropic spectral Barron spaces and combine it with dimension-efficient neural network approximation. Our argument consists of four main components.

The main technical difficulty is that the solution is generated only forward in time on \([0,T]\), whereas the anisotropic Barron norm is defined through a global space-time Fourier extension. To bridge this mismatch, we construct an extension of the fractional heat semigroup trajectory across the initial time. More precisely, we choose the reflection coefficients \(c_1,\ldots,c_N\) by solving a Vandermonde system so that the temporal derivatives of the reflected exponential kernel match at \(t=0\) up to order \(N-1\) (Lemma~\ref{lem:reflection_coeffs}). These matching conditions cancel the first \(N\) terms in the large-frequency expansion of the temporal Fourier transform and yield a weighted \(L^1\) Fourier estimate whose constant is uniform in the semigroup parameter (Lemma~\ref{lem:fourier_bound}). Substituting \(|\xi|^\gamma+\lambda\) for this parameter then gives a quantitatively controlled global extension of the homogeneous fractional heat evolution and proves the corresponding Barron regularity estimate (Theorem~\ref{thm:homogeneous_solution}).

We next establish maximal regularity for the principal fractional parabolic operator. The symbol \(\mathrm{i}\tau+|\xi|^\gamma\) encodes the anisotropic relation between temporal and spatial regularity,
according to which one temporal derivative corresponds to \(\gamma\) spatial derivatives. On the whole space-time domain, the damped equation is solved through the Fourier multiplier
\(
\frac{1}{\lambda+\mathrm{i}\tau+|\xi|^\gamma},
\)
which yields the full regularity gain directly (Theorem~\ref{thm:global_shifted_mr}). For the finite-time problem with zero initial data, we restrict the global inhomogeneous solution and subtract a homogeneous correction constructed using Theorem~\ref{thm:homogeneous_solution}; uniqueness then identifies the resulting function as the desired solution (Theorem~\ref{thm:finite_mr_unshifted}, together with Lemmas~\ref{lem:schwartz_multiplier} and~\ref{lem:uniqueness_homogeneous_heat}). Conjugation by an exponential temporal weight transforms the undamped equation into the damped setting, allowing these estimates to be transferred to the principal fractional heat equation (Theorem~\ref{thm: inhomo. heat}).

The lower-order drift and potential terms are incorporated by the method of continuity. Dimension-independent multiplication estimates control \(\bfb\cdot\nabla v\) and \(cv\) in the relevant anisotropic Barron spaces (Proposition~\ref{260516347}). A finite bootstrap reduces the desired estimate to the base regularity space \(\mathcal B^{1,\gamma}(T)\). At this level, introducing a sufficiently large damping parameter provides weighted control of the lower-order norms and allows both the drift and potential terms to be absorbed. The resulting a priori estimate is uniform along the continuity path and therefore completes the proof of Theorem~\ref{thm: intro PDE estimate}. The interpolation inequality in Proposition~\ref{2607201245} supplies the intermediate anisotropic estimates used in the damped analysis.

We also show that the anisotropic space-time formulation cannot generally be replaced by a uniform-in-time spectral Barron estimate. The counterexample consists of forcing packets supported on pairwise disjoint time intervals and spatial-frequency shells chosen according to the parabolic scaling (Section~\ref{26.06.07.14.31}). At every fixed time, at most one packet is active, so the uniform-in-time Barron norm of the forcing remains bounded. At a later observation time, however, all frequency packets contribute comparable amounts to the higher-order Barron norm of the solution. Their contributions therefore accumulate with the number of packets, disproving the corresponding \(L^\infty\)-in-time maximal-regularity estimate.

Finally, we translate the anisotropic Barron regularity into neural network approximation estimates. We first establish integer-order temporal and spatial Sobolev bounds for individual ridge atoms and recover fractional-order estimates through Hilbert-valued Sobolev interpolation (Lemma~\ref{lem:hilbert-valued-interpolation-explicit} and Corollary~\ref{cor:lipschitz-domain-sobolev-interpolation}). Fourier inversion represents the target function as an expectation of such atoms, and the Hilbert-space sampling lemma converts this representation into an \(n\)-term approximation with rate \(n^{-1/2}\) (Lemma~\ref{lem:hilbert-sampling}). For non-periodic activations satisfying the polynomial-decay condition in Assumption~\ref{26.05.17.19.46}, the temporal and spatial atom estimates are obtained separately in Lemma~\ref{lem:barron-mixed-sobolev-approx} and combined in Theorem~\ref{thm:barron-mixed-sobolev-approx}. For non-constant periodic activations, a nonzero Fourier-series coefficient yields a direct ridge representation controlled by the natural anisotropic Barron norm, leading to Theorem~\ref{thm:periodic-activation-barron-approx}.

We conclude this overview by briefly summarizing the organization of the paper. Section~\ref{sec:preliminaries} develops the basic functional-analytic properties of anisotropic spectral Barron spaces, including embeddings, interpolation, and dimension-independent multiplication estimates. Section~\ref{sec:regularity} establishes the maximal-regularity theory for fractional parabolic equations and proves the failure of its uniform-in-time analogue. Section~\ref{sec: approximation} derives the neural network approximation results for non-periodic and periodic activation functions.

\section{Anisotropic spectral Barron space}
\label{sec:preliminaries}
We begin by fixing the Fourier transform convention used throughout the paper.
\begin{defn}[Fourier transform]\label{def:fourier}
Let \(N\geq 1\) be an integer. For \(v\in L^1(\bR^N)\), we define its Fourier transform and the inverse Fourier transform by
\begin{equation*}
\begin{aligned}
    \cF_N[v](\xi)
    :=
    \int_{\bR^N} \mathrm{e}^{-\mathrm{i}x\cdot \xi}v(x)\dd x,
    \qquad
    \cF_N^{-1}[v](x)
    :=
    (2\pi)^{-N}
    \int_{\bR^N} \mathrm{e}^{\mathrm{i}x\cdot \xi}v(\xi)\dd \xi.
\end{aligned} 
\end{equation*}
\end{defn}

With the convention in Definition~\ref{def:fourier}, we have
    $\cF_N[\partial_{x_j}v](\xi)=\mathrm{i}\xi_j\cF_N[v](\xi)$,
    for $j\in\{1,\ldots,N\}$.
Moreover, whenever the operations are well-defined,
\[\cF_N[v\ast w]=\cF_N[v]\cF_N[w],
    \qquad
    \cF_N[vw]=(2\pi)^{-N}\cF_N[v]\ast\cF_N[w].\]

\begin{rem}\label{rem:fourier of S'}
(i) The operators \(\cF_N\) and \(\cF_N^{-1}\) in Definition~\ref{def:fourier} extend to the space of tempered distributions $\cS'(\bR^N)$ by duality. More precisely, for \(v\in \cS'(\bR^N)\), we define
\(\cF_N[v]\in \cS'(\bR^N)\) and \(\cF_N^{-1}[v]\in \cS'(\bR^N)\) by
\[
    \langle \cF_N[v],\varphi\rangle
    :=
    \langle v,\cF_N[\varphi]\rangle,
    \qquad \langle \cF_N^{-1}[v],\varphi\rangle
    :=
    \langle v,\cF_N^{-1}[\varphi]\rangle,
    \qquad \varphi\in\cS(\bR^N).
\] (ii) Moreover, if \(v\in\cS'(\bR^N)\) and \(\cF_N[v]\) has an \(L^1\)-representative, then $v$ is recovered as
\(v=\cF_N^{-1}[\cF_N[v]]\) pointwise. Throughout the paper, we identify tempered distributions with their function representatives whenever they exist.
\end{rem}
\begin{rem}
\label{26.02.22.15.15}
The space-time Barron norm controls the spatial Barron norm at each fixed time. In particular, let \(v\in \cB^{\alpha,\beta}(T)\) and
\(V\in \cB^{\alpha,\beta}(\bR^{d+1})\) be any extension of \(v\). For every \(t\in[0,T]\),
\begin{align*}
    \|v(t,\cdot)\|_{\cB^\beta(\bR^d)}
    &=
    \int_{\bR^d}
    (1+|\xi|^\beta)
    |\cF_d[V(t,\cdot)](\xi)|
    \dd\xi \\
    &\leq
    \frac{1}{2\pi}
    \int_{\bR^d}
    (1+|\xi|^\beta)
    \int_{\bR}
    |\cF_{d+1}[V](\tau,\xi)|
    \dd\tau\dd\xi \leq
    \frac{1}{2\pi}
    \|V\|_{\cB^{\alpha,\beta}(\bR^{d+1})}.
\end{align*}
Taking the infimum over all such extensions \(V\), we obtain $
    \|v(t,\cdot)\|_{\cB^\beta(\bR^d)}
    \leq
    \frac{\|v\|_{\cB^{\alpha,\beta}(T)}}{2\pi}
    $.
\end{rem}

\subsection{Banach spaces}
\label{26.07.22.11.24}
We next verify that our anisotropic Barron spaces are Banach spaces, using the isomorphism to the (weighted) $L^1$ space and the quotient space argument.
\begin{prop}
\label{26.06.19.14.00}
Let \(\alpha,\beta\geq 0\) and \(T\in(0,\infty)\).
Then \(\cB^{\alpha,\beta}(\mathbb{R}^{d+1})\) and \(\cB^{\alpha,\beta}(T)\) are Banach spaces.
\end{prop}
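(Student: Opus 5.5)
The proof has two parts. First, the Fourier transform is an isometry from $\cB^{\alpha,\beta}(\bR^{d+1})$ onto a weighted $L^1$ space. Second, $\cB^{\alpha,\beta}(T)$ is a quotient of that space by a closed subspace.

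For $\cB^{\alpha,\beta}(\bR^{d+1})$, set $w(\tau,\xi):=1+|\tau|^\alpha+|\xi|^\beta\geq 1$ and $L^1_w:=L^1(\bR^{d+1};w\,\dd\tau\dd\xi)$. The map $\Phi:v\mapsto \cF_{d+1}[v]$ sends $\cB^{\alpha,\beta}(\bR^{d+1})$ isometrically into $L^1_w$, and it is injective because $\cF_{d+1}$ is injective on $\cS'$.

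For surjectivity, take any $g\in L^1_w$. Since $w\ge1$, we have $g\in L^1\subset\cS'$, so $v:=\cF_{d+1}^{-1}[g]$ is a bounded continuous function and in particular a tempered distribution. It satisfies $\cF_{d+1}[v]=g$ and $\|v\|_{\cB^{\alpha,\beta}}=\|g\|_{L^1_w}<\infty$. Hence $\Phi$ is an isometric isomorphism onto the Banach space $L^1_w$, and completeness transfers. This also shows that $\|\cdot\|_{\cB^{\alpha,\beta}}$ is a genuine norm, since definiteness comes from injectivity.

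For $\cB^{\alpha,\beta}(T)$, let
$$Z:=\{V\in\cB^{\alpha,\beta}(\bR^{d+1}) : V|_{[0,T]\times\bR^d}=0\}.$$
The restriction map $R:V\mapsto V|_{[0,T]\times\bR^d}$ is a linear surjection from $\cB^{\alpha,\beta}(\bR^{d+1})$ onto $\cB^{\alpha,\beta}(T)$ with kernel $Z$. By definition of the infimum norm, $R$ induces an isometric isomorphism
$$\cB^{\alpha,\beta}(\bR^{d+1})/Z\cong\cB^{\alpha,\beta}(T).$$
The quotient of a Banach space by a closed subspace is Banach, and the quotient seminorm is a norm precisely because $Z$ is closed. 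So everything reduces to showing that $Z$ is closed.

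For closedness, I use the bound from Remark~\ref{26.02.22.15.15}. By Fourier inversion, every $V\in\cB^{\alpha,\beta}(\bR^{d+1})$ is a continuous function with
$$\sup_{(t,x)}|V(t,x)|\le(2\pi)^{-d-1}\|\cF_{d+1}[V]\|_{L^1}\le (2\pi)^{-d-1}\|V\|_{\cB^{\alpha,\beta}(\bR^{d+1})}.$$
Thus convergence in $\cB^{\alpha,\beta}$ implies uniform pointwise convergence. If $V_k\in Z$ and $V_k\to V$, then $V=\lim V_k=0$ pointwise on $[0,T]\times\bR^d$, so $V\in Z$.

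The only point needing care is that restriction is well defined and independent of representative. This holds because elements have continuous representatives, per Remark~\ref{rem:fourier of S'}(ii), so "$V|_{[0,T]\times\bR^d}=v$" is a pointwise identity of continuous functions. I expect no real obstacle beyond stating this identification cleanly.
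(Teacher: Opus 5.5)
Your proposal is correct and takes essentially the same route as the paper. You identify $\cB^{\alpha,\beta}(\bR^{d+1})$ isometrically with $L^1\bigl(\bR^{d+1},(1+|\tau|^\alpha+|\xi|^\beta)\,\dd\tau\dd\xi\bigr)$ via $\cF_{d+1}$, and then realize $\cB^{\alpha,\beta}(T)$ as the quotient by the kernel of the restriction map, which is closed because restriction is bounded into $L^\infty$ by Fourier inversion. One small correction: that sup-norm bound follows directly from Fourier inversion, as in the paper, not from Remark~\ref{26.02.22.15.15}, which concerns fixed-time spatial Barron norms; this does not affect the argument.
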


\begin{proof}
Set $X:=\cB^{\alpha,\beta}(\bR^{d+1})$.
By definition, the Fourier transform $\cF_{d+1}$ is an isometric isomorphism from $X$ onto the weighted space
$$
    L^1_w(\bR^{d+1})
    :=
    L^1\bigl(\bR^{d+1},(1+|\tau|^{\alpha}+|\xi|^{\beta})\,\dd\tau\dd\xi\bigr).
$$
Indeed, if $g\in L^1_w(\bR^{d+1})$, then $g\in L^1(\bR^{d+1})$ and $\cF_{d+1}^{-1}[g]\in X$. 
Hence $X$ is a Banach space.

Let $R:X\to\mathcal B^{\alp,\bt}(T)$ be the restriction operator defined by $RV:=V|_{[0,T]\times\bR^d}$. 
By the definition of $\cB^{\alpha,\beta}(T)$, the map $R$ is surjective.
By the Fourier inversion,
$$
    \|RV\|_{L^\infty([0,T]\times\bR^d)}
    \leq
    \frac{1}{(2\pi)^{d+1}}
    \|\cF_{d+1}[V]\|_{L^1(\bR^{d+1})}
    \leq
    \frac{1}{(2\pi)^{d+1}}\|V\|_X.
$$
Thus $R$ is bounded, and hence $N:=\ker R$ is a closed subspace of $X$.
Therefore, by the definition of the quotient norm \cite[Ch.~III, \S4, (4.1)]{ConwayFA},
$$
\begin{aligned}
    \|RV\|_{\cB^{\alpha,\beta}(T)}
    =
    \inf\bigl\{\|U\|_X:RU=RV\bigr\} =
    \inf_{W\in N}\|V+W\|_X
    =
    \|V+N\|_{X/N}.
\end{aligned}
$$
Consequently, the induced map $\widetilde R:X/N\longrightarrow \cB^{\alpha,\beta}(T)$ defined by $\widetilde R(V+N):=RV$ is an isometric isomorphism. 
Since the quotient of a Banach space by a closed subspace is a Banach space \cite[Ch.~III, \S4, Thm.~4.2(b)]{ConwayFA}, $\cB^{\alpha,\beta}(T)$ is a Banach space.
\end{proof}

\subsection{Embedding relations}
\label{26.07.22.11.25}
We next collect self-embedding properties that allow us to pass freely from higher to lower space-time regularities without depending on the regularity indices. After that, we also establish the embedding relations with classical H\"older spaces. 

\begin{prop}\label{lem:self-embedding}
Let \(T\in(0,\infty)\), $0\leq \alpha_1\leq \alpha_2<\infty$, and $0\leq \beta_1\leq \beta_2<\infty$.
Then
$$
\cB^{\alp_2}(\bR^{d})
    \embed
    \cB^{\alp_1}(\bR^{d}),\quad \cB^{\alp_2,\bt_2}(\bR^{d+1})
    \embed
    \cB^{\alp_1,\bt_1}(\bR^{d+1}),\quad 
    \cB^{\alp_2,\bt_2}(T)
    \embed
    \cB^{\alp_1,\bt_1}(T).
$$
More precisely,
\begin{align}\label{2607081031}
\begin{alignedat}{2}
    \nrm{f}_{\cB^{\alp_1}(\bR^{d})}
    &\leq
    2\nrm{f}_{\cB^{\alp_2}(\bR^{d})}
    \qd&&
    \text{for every }f\in\cB^{\alp_2}(\bR^{d}),\\
    \nrm{V}_{\cB^{\alp_1,\bt_1}(\bR^{d+1})}
    &\leq
    3\nrm{V}_{\cB^{\alp_2,\bt_2}(\bR^{d+1})}
    \qd&&
    \text{for every }V\in\cB^{\alp_2,\bt_2}(\bR^{d+1}),\\
    \nrm{v}_{\cB^{\alp_1,\bt_1}(T)}
    &\leq
    3\nrm{v}_{\cB^{\alp_2,\bt_2}(T)}
    \qd&&
    \text{for every }v\in\cB^{\alp_2,\bt_2}(T).
\end{alignedat}
\end{align}
\end{prop}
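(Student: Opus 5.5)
The plan is to reduce all three embeddings to one pointwise inequality between the Fourier weights, and then pass to the restriction spaces by taking infima over extensions.

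\textbf{Scalar weights.} For $r\ge 0$ and $0\le a_1\le a_2$, I use the elementary bound
$$r^{a_1}\le 1+r^{a_2}.$$
If $r\le 1$, then $r^{a_1}\le 1$. If $r>1$, then $r^{a_1}\le r^{a_2}$. The convention $0^0=1$ is covered by the first case.

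\textbf{First two estimates.} From the scalar bound,
$$1+|\xi|^{\alpha_1}\le 2\bigl(1+|\xi|^{\alpha_2}\bigr),$$
which gives the constant $2$ in the first estimate of \eqref{2607081031}. In space-time, applying the scalar bound separately to $|\tau|^{\alpha_1}$ and $|\xi|^{\beta_1}$ gives
$$1+|\tau|^{\alpha_1}+|\xi|^{\beta_1}\le 3+|\tau|^{\alpha_2}+|\xi|^{\beta_2}\le 3\bigl(1+|\tau|^{\alpha_2}+|\xi|^{\beta_2}\bigr).$$
I then multiply by $|\cF_N[f]|$ (respectively $|\cF_{d+1}[V]|$) and integrate. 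This yields the first two inequalities, together with membership of $f$ and $V$ in the smaller spaces. Local integrability of the Fourier transform is unchanged, since it is the same function.

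\textbf{Restriction spaces.} Let $v\in\cB^{\alpha_2,\beta_2}(T)$ and let $V$ be any extension in $\cB^{\alpha_2,\beta_2}(\bR^{d+1})$. By the second estimate, $V$ is also an admissible extension in $\cB^{\alpha_1,\beta_1}(\bR^{d+1})$. Hence
$$\|v\|_{\cB^{\alpha_1,\beta_1}(T)}\le \|V\|_{\cB^{\alpha_1,\beta_1}(\bR^{d+1})}\le 3\|V\|_{\cB^{\alpha_2,\beta_2}(\bR^{d+1})}.$$
Taking the infimum over all such $V$ gives the third estimate.

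There is no real obstacle here. The only point needing care is the case $|\tau|<1$ or $|\xi|<1$ with exponent $0$, and the case split in the scalar bound already handles it.
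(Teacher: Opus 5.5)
Your proposal is correct and follows essentially the same route as the paper: the pointwise bound $r^{a_1}\le 1+r^{a_2}$ gives the weight comparisons with constants $2$ and $3$, and the restriction-space estimate follows by viewing any extension as an admissible one for the smaller space and taking the infimum. The only slip is notational: in the first estimate the transform is $\cF_d$, not $\cF_N$.
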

\begin{proof}
   For \(0\leq A_1\leq A_2\) and \(a\geq0\), we have \(a^{A_1}\leq 1+a^{A_2}\). 
   Hence \(1+|\xi|^{\alp_1}\leq 2(1+|\xi|^{\alp_2})\), which gives the first inequality in \eqref{2607081031}. 
   Similarly, \(1+|\tau|^{\alp_1}+|\xi|^{\bt_1}\leq 3(1+|\tau|^{\alp_2}+|\xi|^{\bt_2})\), and therefore the second inequality in \eqref{2607081031} follows directly from the definition of the \(\cB^{\alp,\bt}(\bR^{d+1})\)-norm.

It remains to prove the assertion on \((0,T)\). 
Let \(v\in\cB^{\alp_2,\bt_2}(T)\), and let \(V\in\cB^{\alp_2,\bt_2}(\bR^{d+1})\) be any extension of \(v\). 
By the second inequality and the definition of \(\|v\|_{\cB^{\alp_1,\bt_1}(T)}\), we have $\|v\|_{\cB^{\alp_1,\bt_1}(T)}\leq \|V\|_{\cB^{\alp_1,\bt_1}(\bR^{d+1})}\leq 3\|V\|_{\cB^{\alp_2,\bt_2}(\bR^{d+1})}$.
Taking the infimum over all such \(V\), we obtain the third inequality in \eqref{2607081031}.
\end{proof}

The weighted Fourier integrability also implies classical regularity of the corresponding functions. In what follows, we make this relation precise in an anisotropic Hölder scale.  For \(E\subset\bR^{d+1}\) and \(a,b\in[0,1]\), define
\[
[h]_{a,b;E}
:=
\begin{cases}
\displaystyle
\sup_{\substack{(t,x),(t',x')\in E\\ (t,x)\neq(t',x')}}
\frac{|h(t,x)-h(t',x')|}
{|t-t'|^a+|x-x'|^b},
& a>0,\ b>0,\\[2ex]
\displaystyle
\sup_{\substack{(t,x),(t',x)\in E\\ t\neq t'}}
\frac{|h(t,x)-h(t',x)|}{|t-t'|^a},
& a>0,\ b=0,\\[2ex]
\displaystyle
\sup_{\substack{(t,x),(t,x')\in E\\ x\neq x'}}
\frac{|h(t,x)-h(t,x')|}{|x-x'|^b},
& a=0,\ b>0,\\[2ex]
\qd0,
& a=b=0.
\end{cases}
\]

Below, we use the convention that \(0/0=0\) and
\(\alp/0=+\infty\) for \(a>0\). For \(\alpha,\beta\geq0\), define the admissible set of derivatives by
\[
    \mathcal D_{\alpha,\beta}
    :=
    \left\{
    (i,\eta)\in\bN_0\times\bN_0^d:
    \frac{i}{\alpha}+\frac{|\eta|}{\beta}\leq1
    \right\}.
\]
For \((i,\eta)\in\mathcal D_{\alpha,\beta}\), let
\begin{alignat*}{2}
    \sigma_{i,\eta}
    \,:=
    \sup\left\{
    s\in[0,1]:
    \frac{i+s}{\alpha}+\frac{|\eta|}{\beta}\leq1
    \right\},\,\rho_{i,\eta}
    \,:=
    \sup\left\{
    r\in[0,1]:
    \frac{i}{\alpha}+\frac{|\eta|+r}{\beta}\leq1
    \right\}.
\end{alignat*}
Thus \(\sigma_{i,\eta}\) and \(\rho_{i,\eta}\) are the remaining time and
space regularities, respectively, after taking the derivative
\(\partial_t^i\partial_x^\eta\).

We define \(C^{\alpha,\beta}_{t,x}(E)\) to be the space of all functions
\(f\in C(E)\) such that, for every \((i,\eta)\in\mathcal D_{\alpha,\beta}\),
the derivative \(\partial_t^i\partial_x^\eta f\) exists classically in the
interior of \(E\) and extends continuously to \(E\), and
\[
\|f\|_{C^{\alpha,\beta}_{t,x}(E)}
:=
\sum_{(i,\eta)\in\mathcal D_{\alpha,\beta}}
\|\partial_t^i\partial_x^\eta f\|_{L^\infty(E)}
+
\sum_{(i,\eta)\in\mathcal D_{\alpha,\beta}}
[\partial_t^i\partial_x^\eta f]_{\sigma_{i,\eta},\rho_{i,\eta};E}
<\infty .
\]

\begin{prop}\label{prop:B-to-Ctx}
Let \(T\in(0,\infty)\) and \(\alpha,\beta\geq0\). Then
\[
    \cB^{\alpha,\beta}(\bR^{d+1})
    \embed
    C^{\alpha,\beta}_{t,x}(\bR^{d+1}),
    \qquad
    \cB^{\alpha,\beta}(T)
    \embed
    C^{\alpha,\beta}_{t,x}([0,T]\times\bR^d).
\]
More precisely, for every
\(f\in \cB^{\alpha,\beta}(\bR^{d+1})\) and
\(v\in \cB^{\alpha,\beta}(T)\), we have
\begin{align*}
&\|f\|_{C^{\alpha,\beta}_{t,x}(\bR^{d+1})}
\leq
C(d,\alpha,\beta)\|f\|_{\cB^{\alpha,\beta}(\bR^{d+1})},\\ &\|v\|_{C^{\alpha,\beta}_{t,x}([0,T]\times\bR^d)}
\leq
C(d,\alpha,\beta)\|v\|_{\cB^{\alpha,\beta}(T)}.
\end{align*}
\end{prop}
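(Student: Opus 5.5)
The plan is to work entirely on the Fourier side via the inversion formula of Remark~\ref{rem:fourier of S'}(ii):
\[
f(t,x)=(2\pi)^{-d-1}\int_{\bR^{d+1}}\mathrm{e}^{\mathrm{i}(t\tau+x\cdot\xi)}\cF_{d+1}[f](\tau,\xi)\,\dd\tau\dd\xi .
\]
The second embedding follows from the first by restriction and taking the infimum over extensions, exactly as in the proof of Proposition~\ref{lem:self-embedding}. The reason is that every quantity in $\|\cdot\|_{C^{\alpha,\beta}_{t,x}([0,T]\times\bR^d)}$ is dominated by the corresponding quantity for an extension $V$ on $\bR^{d+1}$. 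Interior derivatives of $v$ coincide with those of $V$, and the continuous extension to the boundary is inherited from $V$.

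For the whole-space estimate, I fix $(i,\eta)\in\mathcal D_{\alpha,\beta}$. I differentiate under the integral, which is justified by dominated convergence, so that
\[
\partial_t^i\partial_x^\eta f=(2\pi)^{-d-1}\int (\mathrm{i}\tau)^i(\mathrm{i}\xi)^\eta\,\mathrm{e}^{\mathrm{i}(t\tau+x\cdot\xi)}\cF_{d+1}[f]\,\dd\tau\dd\xi .
\]
This derivative is continuous by dominated convergence. The key pointwise inequality is
\[
|\tau|^{i}|\xi|^{|\eta|}\le C\,(1+|\tau|^\alpha+|\xi|^\beta)\quad\text{whenever } \tfrac{i}{\alpha}+\tfrac{|\eta|}{\beta}\le 1 .
\]
I prove it by weighted Young's inequality: set $p=\alpha/i$ and $q=\beta/|\eta|$, so that $1/p+1/q\le1$. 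Then $|\tau|^i|\xi|^{|\eta|}=(|\tau|^\alpha)^{1/p}(|\xi|^\beta)^{1/q}\le \max(1,|\tau|^\alpha,|\xi|^\beta)$. The degenerate cases $i=0$ or $\eta=0$, and the conventions $0/0=0$, $a/0=\infty$ which force $i=0$ when $\alpha=0$, are handled directly. This gives the $L^\infty$ bound on each derivative. The cardinality of $\mathcal D_{\alpha,\beta}$ depends only on $d,\alpha,\beta$, so summing over it contributes only to $C(d,\alpha,\beta)$.

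For the H\"older seminorms I use $|\mathrm{e}^{\mathrm{i}\theta}-\mathrm{e}^{\mathrm{i}\theta'}|\le 2^{1-s}|\theta-\theta'|^s$ for $s\in[0,1]$. I split the difference at $(t,x)$ and $(t',x')$ as a time increment at fixed $x$ plus a space increment at fixed $t'$. The time increment is bounded by $|t-t'|^{\sigma}\int |\tau|^{i+\sigma}|\xi|^{|\eta|}|\cF_{d+1}[f]|$ with $\sigma=\sigma_{i,\eta}$. The space increment is bounded by $|x-x'|^{\rho}\int |\tau|^i|\xi|^{|\eta|+\rho}|\cF_{d+1}[f]|$ with $\rho=\rho_{i,\eta}$, where I use $|(x-x')\cdot\xi|^\rho\le|x-x'|^\rho|\xi|^\rho$. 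By definition of $\sigma_{i,\eta}$ and $\rho_{i,\eta}$ (the suprema are attained, since the defining condition is closed), the exponent pairs $(i+\sigma,|\eta|)$ and $(i,|\eta|+\rho)$ still satisfy the admissibility inequality. The same Young-type bound, which was proved for real exponents, therefore applies. This yields $[\partial_t^i\partial_x^\eta f]_{\sigma,\rho}\le C\|f\|_{\cB^{\alpha,\beta}}$ in the case $a,b>0$. The one-sided cases $a=0$ or $b=0$ are the same argument with one increment absent.

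The main obstacle is bookkeeping rather than analysis. The Young inequality must be checked for real exponents with the $0/0$ and $a/0$ conventions, including the case where $\alpha$ or $\beta$ vanishes, so that the admissible set only contains the zero index in the corresponding variable. I also need to confirm that the suprema defining $\sigma_{i,\eta},\rho_{i,\eta}$ keep the shifted exponents admissible, so that no loss occurs.
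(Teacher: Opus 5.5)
Your proposal is correct and follows essentially the same route as the paper's proof. Both use Fourier inversion, differentiate under the integral using the admissibility bound $|\tau|^i|\xi|^{|\eta|}\lesssim 1+|\tau|^\alpha+|\xi|^\beta$, apply $|\mathrm{e}^{\mathrm{i}a}-1|\le 2^{1-s}|a|^s$ with the shifted exponent pairs $(i+\sigma_{i,\eta},|\eta|)$ and $(i,|\eta|+\rho_{i,\eta})$, and take the infimum over extensions for the restricted space. Your explicit split into a time increment at fixed $x$ plus a space increment at fixed $t'$, and your elementary $\max$-bound in place of Young's inequality, are just slightly cleaner bookkeeping of the same argument.
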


\begin{proof}
We first prove the estimate on \(\bR^{d+1}\). We use the convention that
\(0/0=0\) and \(a/0=+\infty\) for \(a>0\). We shall repeatedly use the
following elementary consequence of Young's inequality: if \(p,q\geq0\) and
    $\frac{p}{\alpha}+\frac{q}{\beta}\leq1$,
then $(1+|\tau|)^p(1+|\xi|)^q
    \lesssim_{\alpha,\beta}
    1+|\tau|^\alpha+|\xi|^\beta$.
Indeed, when \(\alpha,\beta>0\), this follows from Young's inequality. If
\(\alpha=0\) or \(\beta=0\), the convention forces the corresponding exponent
to be zero.

Fix \((k,\eta)\in\mathcal D_{\alpha,\beta}\). Since $\frac{k}{\alpha}+\frac{|\eta|}{\beta}\leq1$,
the preceding bound gives
\[
    \int_{\bR^{d+1}}
    (1+|\tau|)^k(1+|\xi|)^{|\eta|}
    |\cF_{d+1}f(\tau,\xi)|\,\dd\xi\dd\tau
    \leq
    C\|f\|_{\cB^{\alpha,\beta}(\bR^{d+1})}.
\]
The same estimate holds with \(k,\eta\) replaced by any lower order \(\ell,\mu\), where \(0\leq \ell\leq k\) and \(\mu\leq\eta\). 
Hence, the Fourier inversion formula may be differentiated under the integral sign, and \(\partial_t^k\partial_x^\eta f\) exists classically and is continuous. It is given by $
    g_{k,\eta}
    :=
    \cF_{d+1}^{-1}
    \left[
        (i\tau)^k(i\xi)^\eta \cF_{d+1}f
    \right]$,
and satisfies $\|g_{k,\eta}\|_{L^\infty(\bR^{d+1})}
    \leq
    C\|f\|_{\cB^{\alpha,\beta}(\bR^{d+1})}$.

It remains to estimate the H\"older seminorms. 
By the definition of \(\sigma_{k,\eta}\) and \(\rho_{k,\eta}\), we have $\frac{k+\sigma_{k,\eta}}{\alpha}
    +
    \frac{|\eta|}{\beta}
    \leq1$ and $\frac{k}{\alpha}
    +
    \frac{|\eta|+\rho_{k,\eta}}{\beta}
    \leq1$.
Thus the same weight bound yields
\begin{align}\label{260710810}
\begin{aligned}
&\int_{\bR^{d+1}}
\left\{
    |\tau|^{k+\sigma_{k,\eta}}|\xi|^{|\eta|}
    +
    |\tau|^k|\xi|^{|\eta|+\rho_{k,\eta}}
\right\}
|\cF_{d+1}f(\tau,\xi)|\,\dd\xi\dd\tau  \leq
C\|f\|_{\cB^{\alpha,\beta}(\bR^{d+1})}.
\end{aligned}
\end{align}
If \((\sigma_{k,\eta},\rho_{k,\eta})\neq(0,0)\), Fourier inversion and
\(|\mathrm{e}^{\mathrm{i}a}-1|\leq 2^{1-\gamma}|a|^\gamma\), \(0\leq\gamma\leq1\), give
\[
\begin{aligned}
&|g_{k,\eta}(t,x)-g_{k,\eta}(t',x')| \\
\leq
\,&C\int_{\bR^{d+1}}
\left(
    \mathbf 1_{\{\sigma_{k,\eta}>0\}}
    |t-t'|^{\sigma_{k,\eta}}
    |\tau|^{\sigma_{k,\eta}}+
    \mathbf 1_{\{\rho_{k,\eta}>0\}}
    |x-x'|^{\rho_{k,\eta}}
    |\xi|^{\rho_{k,\eta}}
\right)\\
&\hspace{6cm}\times |\tau|^{k}|\xi|^{|\eta|}
|\cF_{d+1}f(\tau,\xi)|\,\dd\xi\dd\tau .
\end{aligned}
\]
Dividing according to the relevant case in the definition of
\([\cdot]_{\sigma_{k,\eta},\rho_{k,\eta};\bR^{d+1}}\) and using
\eqref{260710810}, we obtain
$[g_{k,\eta}]_{\sigma_{k,\eta},\rho_{k,\eta};\bR^{d+1}}
    \leq
    C\|f\|_{\cB^{\alpha,\beta}(\bR^{d+1})}$.
If \((\sigma_{k,\eta},\rho_{k,\eta})=(0,0)\), this seminorm is zero by
definition. 
Hence, for every \((k,\eta)\in\mathcal D_{\alpha,\beta}\),
\[
    \|\partial_t^k\partial_x^\eta f\|_{L^\infty(\bR^{d+1})}
    +
    [\partial_t^k\partial_x^\eta f]_{\sigma_{k,\eta},\rho_{k,\eta};\bR^{d+1}}
    \leq
    C\|f\|_{\cB^{\alpha,\beta}(\bR^{d+1})}.
\]
Summing over the finite set \(\mathcal D_{\alpha,\beta}\), we obtain $\|f\|_{C^{\alpha,\beta}_{t,x}(\bR^{d+1})}
    \leq
    C\|f\|_{\cB^{\alpha,\beta}(\bR^{d+1})}$.

Finally, let \(v\in\cB^{\alpha,\beta}(T)\), and let
\(V\in\cB^{\alpha,\beta}(\bR^{d+1})\) be any extension of \(v\). Applying the
global estimate to \(V\) and restricting to \([0,T]\times\bR^d\), we get $
    \|v\|_{C^{\alpha,\beta}_{t,x}([0,T]\times\bR^d)}
    \leq
    C\|V\|_{\cB^{\alpha,\beta}(\bR^{d+1})}$.
Taking the infimum over all such extensions \(V\) gives the desired estimate
on \([0,T]\times\bR^d\).
\end{proof}

\begin{comm}

\begin{defn}\label{def:vector in B} Let $\alp\in(0,\ift)$. For a vector-valued function $\bff:\bR^d\to\bR^d$, we define
$$
\|\bff\|_{\cB^\alp(\mathbb R^d;\mathbb R^d)}
:=
\int_{\mathbb{R}^d}
(1+|\xi|^\alp)
\abs{\cF_{d}[\bff](\xi)}_{\mathbb{R}^d}
\,\mathrm{d}\xi.
$$
\end{defn}
\begin{rem}
According to Definition~\ref{def:vector in B}, any vector fields
$\bff,\bfg:\mathbb{R}^d\to\mathbb{R}^d$ satisfy
$$
\|\bff\cdot\bfg\|_{\cB^\alp(\mathbb{R}^d)}
\,\lesssim_{\alp}\,(2\pi)^{-d}
\|\bff\|_{\cB^\alp(\mathbb{R}^d;\mathbb{R}^d)}
\|\bfg\|_{\cB^\alp(\mathbb{R}^d;\mathbb{R}^d)}.
$$
Indeed, from $\cF_d[f_i\cdot g_i]
=(2\pi)^{-d}\cF_d[f_i]\ast\cF_d[g_i]$ for $i\in\set{1,2,\dots,d}$, we get
$$
\abs{\cF_d[\bff\cdot\bfg](\xi)}
\,\le\, (2\pi)^{-d}
\int_{\mathbb{R}^d}
|\cF_d[\bff](\eta)|_{\mathbb{R}^d}
|\cF_d[\bfg](\xi-\eta)|_{\mathbb{R}^d}
\,\mathrm{d}\eta,
$$ and hence
\begin{align*}
\begin{alignedat}{2}
\|\bff\cdot\bfg\|_{\cB^\alp(\mathbb{R}^d)}&=&&\int_{\mathbb{R}^d}(1+|\xi|^\alp)\left|\cF_d[\bff\cdot\bfg](\xi)\right|
\,\mathrm{d}\xi\\
&\leq&&(2\pi)^{-d}\int_{\mathbb{R}^d}\int_{\mathbb{R}^d}(1+|\xi|^\alp)|\cF_d[\bff](\eta)|_{\mathbb{R}^d}
|\cF_d[\bfg](\xi-\eta)|_{\mathbb{R}^d}\,\mathrm{d}\eta\,\mathrm{d}\xi\\
&\lesssim_\alp &&(2\pi)^{-d}\int_{\mathbb{R}^d}\int_{\mathbb{R}^d}(1+|\eta|^\alp)(1+|\xi-\eta|^\alp)|\cF_d[\bff](\eta)|_{\mathbb{R}^d}
|\cF_d[\bfg](\xi-\eta)|_{\mathbb{R}^d}\,\mathrm{d}\eta\,\mathrm{d}\xi\\
&=&&(2\pi)^{-d}\|\bff\|_{\cB^\alp(\mathbb{R}^d;\mathbb{R}^d)}\|\bfg\|_{\cB^\alp(\mathbb{R}^d;\mathbb{R}^d)}.
\end{alignedat}
\end{align*}
\end{rem}
\end{comm}

\subsection{Interpolation inequality}
We next record an interpolation inequality for anisotropic spectral Barron spaces, which controls intermediate temporal and spatial regularities by the corresponding endpoint norms.

\begin{prop}\label{2607201245}
Let $\alpha_0,\alpha_1,\beta_0,\beta_1\in[0,\infty)$ satisfy $\alpha_0\leq\alpha_1$ and $\beta_0\leq\beta_1$.
For any $v\in \cB^{\alpha_1,\beta_1}(\bR^{d+1})$ and $\theta\in[0,1]$,
\begin{align}\label{2607201248}
\|v\|_{\cB^{\alpha_\theta,\beta_\theta}(\bR^{d+1})}\leq\|v\|_{\cB^{\alpha_0,\beta_0}(\bR^{d+1})}^{1-\theta}\|v\|_{\cB^{\alpha_1,\beta_1}(\bR^{d+1})}^\theta,
\end{align}
where $\alpha_\theta:=(1-\theta)\alpha_0+\theta\alpha_1$ and $\beta_\theta:=(1-\theta)\beta_0+\theta\beta_1$.
\end{prop}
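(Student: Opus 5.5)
The plan is to reduce the estimate to a pointwise inequality between the weights and then apply Hölder's inequality with respect to the measure $\mu:=|\cF_{d+1}[v](\tau,\xi)|\,\dd\xi\dd\tau$ on $\bR^{d+1}$. Write
$$w_j(\tau,\xi):=1+|\tau|^{\alpha_j}+|\xi|^{\beta_j},\quad j=0,1,\qquad w_\theta(\tau,\xi):=1+|\tau|^{\alpha_\theta}+|\xi|^{\beta_\theta}.$$
By Definition~\ref{def:B_para}, $\|v\|_{\cB^{\alpha_j,\beta_j}(\bR^{d+1})}=\int w_j\,\dd\mu$ for $j\in\{0,\theta,1\}$. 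All three quantities are finite, since $v\in\cB^{\alpha_1,\beta_1}(\bR^{d+1})$ and Proposition~\ref{lem:self-embedding} applies. The cases $\theta=0$ and $\theta=1$ are trivial, so I assume $\theta\in(0,1)$.

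The key step is the pointwise bound $w_\theta\leq w_0^{1-\theta}w_1^{\theta}$. To prove it, set $a=(1,|\tau|^{\alpha_0},|\xi|^{\beta_0})$ and $b=(1,|\tau|^{\alpha_1},|\xi|^{\beta_1})$. Then each component of the weight $w_\theta$ is exactly $a_k^{1-\theta}b_k^{\theta}$; for instance, $|\tau|^{\alpha_\theta}=(|\tau|^{\alpha_0})^{1-\theta}(|\tau|^{\alpha_1})^{\theta}$. This is fine even at $\tau=0$, since both sides vanish when $\alpha_\theta>0$ and equal $1$ when $\alpha_0=\alpha_1=0$, using the convention $0^0=1$. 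The discrete Hölder inequality with exponents $1/(1-\theta)$ and $1/\theta$ then gives
$$\sum_{k=1}^3 a_k^{1-\theta}b_k^\theta\leq\Big(\sum_k a_k\Big)^{1-\theta}\Big(\sum_k b_k\Big)^{\theta},$$
which is the claimed pointwise bound. The ordering hypotheses $\alpha_0\leq\alpha_1$ and $\beta_0\leq\beta_1$ are not needed for this step; they only ensure that the right-hand side of the proposition makes sense for $v$ in the stated space.

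Next I integrate against $\mu$ and apply the continuous Hölder inequality with the same exponents:
$$\int w_\theta\,\dd\mu\leq\int w_0^{1-\theta}w_1^\theta\,\dd\mu\leq\Big(\int w_0\,\dd\mu\Big)^{1-\theta}\Big(\int w_1\,\dd\mu\Big)^{\theta}.$$
This is exactly \eqref{2607201248}.

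The only potentially delicate point is the degenerate exponents, namely zero indices with $\tau=0$ or $\xi=0$. These are handled by the $0^0=1$ convention as noted above, so I expect no real obstacle. No multiplicative constant appears, which is consistent with the sharp form stated in the proposition.
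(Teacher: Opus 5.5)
Your proposal is correct and follows essentially the same route as the paper: the paper also obtains the pointwise bound $w_\theta\leq w_0^{1-\theta}w_1^\theta$ from H\"older's inequality for finite sums and then applies H\"older's inequality on $\mathbb{R}^{d+1}$ against $|\mathcal{F}_{d+1}[v]|\,\mathrm{d}\tau\,\mathrm{d}\xi$. Your extra care with the degenerate exponents and the endpoint cases $\theta\in\{0,1\}$ is fine but not needed for the argument.
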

\begin{proof}
Let $\theta\in[0,1]$ and set $w_\tht:=1+|\tau|^{\alpha_\tht}+|\xi|^{\beta_\tht}$. By H\"older's inequality for finite sums, $w_\tht\leq w_0^{1-\theta}w_1^\theta$.
Therefore, applying H\"older's inequality on $\mathbb R^{d+1}$, 
\begin{align*}
\|v\|_{\cB^{\alpha_\theta,\beta_\theta}(\mathbb{R}^{d+1})}
&=\int_{\bR^{d+1}}w_\tht\abs{\mathcal{F}_{d+1}[v](\tau,\xi)}\dd \tau\dd\xi
\leq \int_{\bR^{d+1}}w_0^{1-\theta}w_1^\theta\abs{\mathcal{F}_{d+1}[v](\tau,\xi)}\dd \tau\dd\xi\\
&\leq
\left(\int_{\mathbb{R}^{d+1}}w_0|\mathcal{F}_{d+1}[v](\tau,\xi)|\dd\tau\dd\xi\right)^{1-\theta}
\left(\int_{\mathbb{R}^{d+1}}w_1|\mathcal{F}_{d+1}[v](\tau,\xi)|\dd\tau\dd\xi\right)^\theta\\
&=\|v\|_{\cB^{\alpha_0,\beta_0}(\bR^{d+1})}^{1-\theta}\|v\|_{\cB^{\alpha_1,\beta_1}(\bR^{d+1})}^\theta,
\end{align*} as desired.
\end{proof}

\subsection{Inner product estimate}
We next derive the multiplication estimates needed to handle the lower-order drift terms in the parabolic equation, where the resulting constant does not deteriorate with the spatial dimension.
\begin{prop}\label{260516347} Let $T\in(0,\ift)$ and $\alp,\bt\in [0,\infty)$. \medskip
\begin{enumerate}
\item[(i)] For any $f,g\in\cB^{\alpha,\beta}(T)$, we have
\begin{align}\label{260311545_11}
\|fg\|_{\mathcal{B}^{\alpha,\beta}(T)}\leq C(\alp,\bt)\f{1}{(2\pi)^{d+1}}\|f\|_{\mathcal{B}^{\alpha,\beta}(T)}\|g \|_{\mathcal{B}^{\alpha,\beta}(T)}\,.
\end{align}

\item[(ii)] We additionally assume that
$\alpha\geq 0$ if $\beta>0$, and $\alpha=0$ if $\beta=0$.
For any $v\in\cB^{\alpha+\alpha/\beta,\beta+1}(T)$ and $b_1,\ldots,b_d\in\cB^{\alp,\bt}(T)$, denoting $\bfb:=(b_1,\ldots,b_d)$,
we have
\begin{align}\label{260311545}
\begin{aligned}
&\|\bfb\cdot \nabla v\|_{\mathcal{B}^{\alpha,\beta}(T)}\\
\leq\,& C(\alp,\bt) \f{1}{(2\pi)^{d+1}}
\left(\sum_{k=1}^d\|b_k\|_{\mathcal{B}^{\alpha,\beta}(T)}^2\right)^{1/2}\|v\|_{\mathcal{B}^{\alpha+\alpha/\beta,\beta+1}(T)}\,,
\end{aligned}
\end{align}
\end{enumerate} where we adopted the convention $\alp/\bt:=0$ when $\alp=\bt=0$.

\end{prop}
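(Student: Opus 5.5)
The plan is to work entirely with global extensions and the convolution structure of the Fourier transform. Afterwards we pass to the restriction spaces by taking infima.

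For (i), let $F,G\in\cB^{\alpha,\beta}(\bR^{d+1})$ be arbitrary extensions of $f,g$. Then $FG$ extends $fg$. By the product rule stated after Definition~\ref{def:fourier},
\[
|\cF_{d+1}[FG](\tau,\xi)|\leq (2\pi)^{-(d+1)}\int_{\bR^{d+1}}|\cF_{d+1}[F](\tau',\xi')|\,|\cF_{d+1}[G](\tau-\tau',\xi-\xi')|\,\dd\tau'\dd\xi'.
\]
This is justified because both Fourier transforms lie in $L^1$, so $F,G$ are bounded continuous and the convolution is integrable. The key pointwise inequality is the quasi-subadditivity
\[
1+|\tau|^\alpha+|\xi|^\beta\leq C(\alpha,\beta)\bigl(1+|\tau'|^\alpha+|\xi'|^\beta\bigr)\bigl(1+|\tau-\tau'|^\alpha+|\xi-\xi'|^\beta\bigr).
\]
It follows from $|a+b|^p\leq 2^{\max(p-1,0)}(|a|^p+|b|^p)$; the constant $C(\alpha,\beta)=\max(1,2^{\alpha-1},2^{\beta-1})$ depends only on the exponents, not on $d$, since $|\xi|$ is the Euclidean norm. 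Integrating in $(\tau,\xi)$ and using Fubini/Tonelli gives $\|FG\|_{\cB^{\alpha,\beta}(\bR^{d+1})}\leq C(2\pi)^{-(d+1)}\|F\|\,\|G\|$. Taking the infimum over $F$ and $G$ then yields \eqref{260311545_11}.

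For (ii), let $V$ extend $v$ and $B_k$ extend $b_k$. Then $\sum_k B_k\partial_{x_k}V$ extends $\bfb\cdot\nabla v$. The Fourier transform of this product is bounded by
\[
(2\pi)^{-(d+1)}\int \Bigl|\sum_k\cF[B_k](\tau',\xi')\,\mathrm{i}(\xi-\xi')_k\Bigr|\,|\cF[V](\tau-\tau',\xi-\xi')|.
\]
Cauchy--Schwarz in $k$ bounds the inner sum by $\bigl(\sum_k|\cF[B_k]|^2\bigr)^{1/2}|\xi-\xi'|$. Applying the same weight splitting as in (i) reduces matters to two integrals. The first is the $\ell^2$-valued quantity $\int w\,(\sum_k|\cF[B_k]|^2)^{1/2}$. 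The second is $\int(1+|\tau|^\alpha+|\xi|^\beta)|\xi|\,|\cF[V]|$. By Minkowski's integral inequality the first is at most $\bigl(\sum_k\|B_k\|^2_{\cB^{\alpha,\beta}(\bR^{d+1})}\bigr)^{1/2}$, so the dimension appears only through this $\ell^2$ sum. For the second, I would use Young's inequality as in Proposition~\ref{prop:B-to-Ctx}. If $\beta>0$, then $|\xi|\leq 1+|\xi|^{\beta+1}$, $|\xi|^{\beta+1}$ is already in the target weight, and
\[
|\tau|^\alpha|\xi|\lesssim |\tau|^{\alpha(\beta+1)/\beta}+|\xi|^{\beta+1}
\]
by Young's inequality with exponents $p=(\beta+1)/\beta$ and $q=\beta+1$. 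Since $\alpha(\beta+1)/\beta=\alpha+\alpha/\beta$, this is controlled by the $\cB^{\alpha+\alpha/\beta,\beta+1}$ weight. If $\beta=0$, then $\alpha=0$ and the weight $|\xi|$ is dominated by $1+|\xi|$.

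The final step is to take infima, and this is the main obstacle. The $\ell^2$ quantity involves a joint infimum over the $B_k$, but since each $B_k$ can be chosen independently with $\|B_k\|\leq \|b_k\|_{\cB^{\alpha,\beta}(T)}+\varepsilon_k$, this is harmless. Infimizing over $V$ then gives \eqref{260311545}. The remaining care is justifying the convolution identity for tempered distributions. To do this, I would note that $\cF[B_k]$ and $|\xi|\cF[V]$ lie in $L^1$, so both factors are bounded continuous functions whose product has an $L^1$ Fourier transform given by the convolution.
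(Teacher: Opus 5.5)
Part (i) is fine and matches the paper. Part (ii) has a genuine gap at the step ``By Minkowski's integral inequality the first is at most $\bigl(\sum_k\|B_k\|^2_{\cB^{\alpha,\beta}(\bR^{d+1})}\bigr)^{1/2}$.'' This applies Minkowski's inequality in the wrong direction. Write $w:=1+|\tau|^\alpha+|\xi|^\beta$. Minkowski says that the $L^1$-norm of an $\ell^2$-valued function dominates the $\ell^2$-norm of the scalar $L^1$-norms:
\[
\Bigl(\sum_{k=1}^d\Bigl(\int w\,|\cF[B_k]|\Bigr)^2\Bigr)^{1/2}\le \int w\Bigl(\sum_{k=1}^d|\cF[B_k]|^2\Bigr)^{1/2}.
\]
The reverse inequality, which is the one you need, is false. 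Suppose the $\cF[B_k]$ have pairwise disjoint supports and $\|B_k\|_{\cB^{\alpha,\beta}(\bR^{d+1})}=1$. Then $\bigl(\sum_k|\cF[B_k]|^2\bigr)^{1/2}=\sum_k|\cF[B_k]|$ pointwise, so your first integral equals $d$, while $\bigl(\sum_k\|B_k\|^2\bigr)^{1/2}=\sqrt d$. Your pointwise Cauchy--Schwarz inside the convolution places the $\ell^2$ structure on the $\bfB$-factor. As a result your chain only proves the estimate with the vector-valued norm $\int w\,|\cF[\bfB]|$ in place of the $\ell^2$ sum, i.e.\ at best with $\sum_k\|b_k\|\le\sqrt d\,\bigl(\sum_k\|b_k\|^2\bigr)^{1/2}$. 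That extra factor $\sqrt d$ is exactly the dimension dependence that (ii) is meant to exclude. For the same reason the infimum step is not harmless in your route: the vector-valued norm does not decouple across $k$.

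The fix is to put the $\ell^2$ structure on the $V$-factor, where pointwise it is just $|\xi|$. First use the triangle inequality in $k$, not Cauchy--Schwarz inside the convolution. Then apply the global form of (i) to each product, which gives $\|\bfB\cdot\nabla V\|_{\cB^{\alpha,\beta}(\bR^{d+1})}\le C(2\pi)^{-(d+1)}\sum_k\|B_k\|_{\cB^{\alpha,\beta}(\bR^{d+1})}\|\partial_{x_k}V\|_{\cB^{\alpha,\beta}(\bR^{d+1})}$. Next apply Cauchy--Schwarz to this finite sum of numbers. Finally use Minkowski in its valid direction:
\[
\Bigl(\sum_{k=1}^d\|\partial_{x_k}V\|^2_{\cB^{\alpha,\beta}(\bR^{d+1})}\Bigr)^{1/2}\le\int w\Bigl(\sum_{k=1}^d\xi_k^2\Bigr)^{1/2}|\cF[V]|=\int w\,|\xi|\,|\cF[V]|.
\]
Your Young's inequality step then finishes the argument as you wrote it. The infima over the $B_k$ and over $V$ can now be taken separately, since only the scalar norms $\|B_k\|$ appear. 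This is the paper's proof.
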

\begin{proof}
For (i), let \(F,G\in\cB^{\alp,\bt}(\bR^{d+1})\) be arbitrary extensions of \(f\) and \(g\), respectively. Then \(FG\) is an extension of \(fg\), and
\[
\nrm{fg}_{\cB^{\alp,\bt}(T)}
\leq
\nrm{FG}_{\cB^{\alp,\bt}(\bR^{d+1})}=\int_{\mathbb{R}^{d+1}}(1+|\tau|^{\alpha}+|\xi|^{\beta})\cF_{d+1}[FG](\tau,\xi)\dd\xi\dd\tau.
\]
Since
\[
\cF_{d+1}[FG]
=
\f{1}{(2\pi)^{d+1}}
\cF_{d+1}[F]*\cF_{d+1}[G],
\]
and $1+|\tau|^\alp+|\xi|^\bt
\lesssim_{\alp,\bt}
\paren{1+|\tau'|^\alp+|\xi'|^\bt}
\paren{1+|\tau-\tau'|^\alp+|\xi-\xi'|^\bt}$,
we obtain
\[
\nrm{FG}_{\cB^{\alp,\bt}(\bR^{d+1})}
\lesssim_{\alp,\bt}
\f{1}{(2\pi)^{d+1}}
\nrm{F}_{\cB^{\alp,\bt}(\bR^{d+1})}
\nrm{G}_{\cB^{\alp,\bt}(\bR^{d+1})}.
\]
Taking the infimum over all such extensions \(F\) and \(G\) gives (i).

For (ii), let \(\bfB=(B_1,\ldots,B_d)\) and \(V\) be arbitrary extensions of \(\bfb\) and \(v\), respectively. By (i),
\[
\begin{aligned}
\nrm{\bfb\cdot\nabla v}_{\cB^{\alp,\bt}(T)}
&\leq
\nrm{\bfB\cdot\nabla V}_{\cB^{\alp,\bt}(\bR^{d+1})}\leq
\sum_{k=1}^{d}
\nrm{\cB_k\partial_{x_k}V}_{\cB^{\alp,\bt}(\bR^{d+1})} \\
&\lesssim_{\alp,\bt}
\f{1}{(2\pi)^{d+1}}
\sum_{k=1}^{d}
\nrm{\cB_k}_{\cB^{\alp,\bt}(\bR^{d+1})}
\nrm{\partial_{x_k}V}_{\cB^{\alp,\bt}(\bR^{d+1})} \\
&\leq
\f{1}{(2\pi)^{d+1}}
\paren{\sum_{k=1}^{d}
\nrm{\cB_k}_{\cB^{\alp,\bt}(\bR^{d+1})}^{2}}^{1/2}
\paren{\sum_{k=1}^{d}
\nrm{\partial_{x_k}V}_{\cB^{\alp,\bt}(\bR^{d+1})}^{2}}^{1/2}.
\end{aligned}
\]
Moreover, by Minkowski's inequality and Young's inequality,
\begin{align} \begin{alignedat}{2} &&&\paren{\sum_{k=1}^{d}\nrm{\partial_{x_k}V}_{\cB^{\alp,\bt}(\bR^{d+1})}^2}^{\f{1}{2}}\\ &\leq&&\int_{\bR^d}\int_\bR\left(\sum_{k=1}^{d}|\xi_k|^2\right)^{1/2}\paren{1+|\tau|^\alp+|\xi|^\bt}|\cF_{d+1}[V](\tau,\xi)|\,\dd\tau\dd\xi\\ &=&&\int_{\bR^d}\int_\bR\paren{|\xi|+|\tau|^\alp|\xi|+|\xi|^{\bt+1}}|\cF_{d+1}[V](\tau,\xi)|\,\dd\tau\dd\xi\\ &\lesssim_{\bt}&&\int_{\bR^d}\int_\bR\paren{1+|\tau|^{\alp+\alp/\bt}+|\xi|^{\bt+1}}| \cF_{d+1}[V](\tau,\xi)|\,\dd\tau\dd\xi\\ &\lesssim&&\,\nrm{V}_{\cB^{\alp+\alp/\bt,\bt+1}(\bR^{d+1})} \end{alignedat} \end{align}
If \(\alp=\bt=0\), the same bound follows from \(|\xi|\leq 1+|\xi|\). Therefore
\[
\nrm{\bfb\cdot\nabla v}_{\cB^{\alp,\bt}(T)}
\lesssim_{\alp,\bt}
\f{1}{(2\pi)^{d+1}}
\paren{\sum_{k=1}^{d}
\nrm{\cB_k}_{\cB^{\alp,\bt}(\bR^{d+1})}^{2}}^{1/2}
\nrm{V}_{\cB^{\alp+\alp/\bt,\bt+1}(\bR^{d+1})}.
\]
Taking the infimum over \(V\), and then over \(B_1,\ldots,B_d\), gives (ii).
\end{proof}

\section{Regularity estimate}
\label{sec:regularity}

This section aims to prove the anisotropic Barron regularity estimate stated in Theorem~\ref{thm: intro PDE estimate} after studying equations with a positive damping term and treating the lower-order terms by the method of continuity. The proof is included in Subsection~\ref{subsec: proof of regularity estimate}. In Subsection~\ref{26.06.07.14.31} that follows, we will see that the space $L^\ift\paren{(0,T);\cB^2(\bR^d)}$ does not admit a similar regularity estimate.

\subsection{Fractional heat equations with damping}
\label{subsec: heat with shift}

We start by establishing the reflection coefficients, which ensure the cancellation of the first several temporal derivatives at the initial time during the reflection procedure of the fractional heat semigroup for negative times.

\begin{lem}
\label{lem:reflection_coeffs}
For any integer $N \ge 1$, there exists a unique vector $(c_1,\dots, c_N)^T\in\bR^N$ such that
$$
\sum_{k=1}^N c_k k^m = (-1)^m \quad \text{for all } m = 0, 1, \dots, N-1.
$$ 
\end{lem}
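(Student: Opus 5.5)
This is a square linear system whose coefficient matrix is a transposed Vandermonde matrix, so we argue through invertibility. Write the conditions as $A\mathbf c=\mathbf r$, where $A=(a_{mk})$ with $a_{mk}:=k^m$ for $m=0,\dots,N-1$ and $k=1,\dots,N$. Here $\mathbf c=(c_1,\dots,c_N)^T$ and $\mathbf r=(1,-1,1,\dots,(-1)^{N-1})^T$. The transpose $A^T$ is the Vandermonde matrix built on the nodes $1,2,\dots,N$. Its determinant is the classical product
\[
\det A=\det A^T=\prod_{1\le j<k\le N}(k-j).
\]
This is nonzero (indeed positive) because the nodes are pairwise distinct.

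Since $A$ has real entries and is invertible, $\mathbf c:=A^{-1}\mathbf r$ is the unique solution, and it lies in $\bR^N$. This gives existence and uniqueness at once.

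If one wants to avoid quoting the determinant formula, injectivity of $A$ can be shown directly. Suppose $A\mathbf c=0$. Then $\sum_k c_k p(k)=0$ for every polynomial $p$ of degree at most $N-1$. Take $p$ to be the Lagrange basis polynomial $\ell_j(x)=\prod_{i\neq j}(x-i)/(j-i)$, which satisfies $\ell_j(k)=\delta_{jk}$. This gives $c_j=0$ for each $j$, so $A$ is injective and hence invertible, being square.

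The same observation yields a description of the solution, which may be useful in later lemmas. The condition is equivalent to $\sum_k c_k p(k)=p(-1)$ for all $p$ of degree at most $N-1$. Choosing $p=\ell_j$ gives
\[
c_j=\ell_j(-1)=\prod_{i\neq j}\frac{-1-i}{j-i}.
\]
There is no real obstacle here. The only point needing care is the correct identification of the matrix (its rows are indexed by powers $m$, its columns by nodes $k$) and the distinctness of the nodes.
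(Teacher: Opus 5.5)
Your proof is correct and takes the same route as the paper: the system is a square Vandermonde system on the distinct nodes $1,\dots,N$, so the matrix is invertible and $\mathbf c=A^{-1}\mathbf r$ is the unique real solution. You also note, correctly, that with rows indexed by powers the matrix is the transpose of the Vandermonde matrix, which leaves the determinant unchanged; the Lagrange-interpolation injectivity argument and the explicit formula $c_j=\ell_j(-1)=\prod_{i\neq j}\frac{-1-i}{j-i}$ are correct extras that the paper does not use.
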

\begin{proof}
The system can be written in matrix form as $\bfV\cdot\bfc = \bfb$, where $\bfV\in\bR^{N\times N}$ is the square Vandermonde matrix, $\bfc=(c_1,\dots, c_N)^T\in\bR^{N\times1}$, and $\bfb=(1,-1,1,\dots,(-1)^{N-1})^T\in\bR^{N\times1}$. Since $\bfV$ has distinct nodes $(1,2,\dots,N)$, we have $\det(\bfV) \neq 0$. Hence, the solution $\bfc$ is explicitly given as $\bfc=\bfV^{-1}\cdot\bfb$.
\end{proof}

Using these coefficients, we extend an exponentially decaying kernel to negative times while preserving sufficient regularity at the initial time $t=0$. This kernel will be used later to construct the global extensions of the finite-time fractional heat semigroup.

\begin{lem}[Temporal Fourier bound]
\label{lem:fourier_bound}
Let $\alp,\beta\in[0,\ift)$ and $\lmb\in(0,\ift)$. For a given integer $N > \alpha$, let $\{c_k\}_{k=1}^N$ be the coefficients determined in
Lemma~\ref{lem:reflection_coeffs}, and define
$$
h(t, \lambda) := 
\begin{cases} 
\mathrm{e}^{-t\lambda} & \text{if } t \ge 0, \\
\sum_{k=1}^N c_k \mathrm{e}^{k t \lambda} & \text{if } t < 0.
\end{cases}
$$ 
Then, for any function $\chi \in C_c^\infty(\mathbb{R})$, we have
\begin{align}\label{260410549}
\int_{\mathbb{R}} (1 + |\tau|^{\alpha }+ \lambda^{\beta}) |\cF_1 [\chi(\cdot)h(\cdot,\lmb)](\tau)| \, \mathrm{d}\tau \,\leq\, C(\alp ,N) \nrm{\chi}_{\cB^{\alp }(\bR)}(1 + {\lambda^{\alpha }+\lambda^{\beta}}).
\end{align}
\end{lem}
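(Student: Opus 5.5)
Since $\cF_1[\chi h]=(2\pi)^{-1}\cF_1[\chi]\ast\cF_1[h(\cdot,\lambda)]$ (with $h$ bounded, continuous and of at most exponential growth only on the support of $\chi$), I will bound the weighted $L^1$ norm of the convolution. Using $1+|\tau|^\alpha\lesssim_\alpha(1+|\tau-\sigma|^\alpha)(1+|\sigma|^\alpha)$ together with Young's inequality, the left side of \eqref{260410549} is at most
\[
C(\alpha)\,\|\chi\|_{\cB^\alpha(\bR)}\Big(\int_\bR(1+|\sigma|^\alpha)|\widehat h(\sigma)|\,\dd\sigma+\lambda^\beta\int_\bR|\widehat h(\sigma)|\,\dd\sigma\Big),\qquad \widehat h:=\cF_1[h(\cdot,\lambda)].
\]
Here $\widehat h$ is the genuine Fourier transform of the globally defined function $h(\cdot,\lambda)$. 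This function is in $L^1(\bR)$: for $t\ge0$ it decays like $\mathrm e^{-\lambda t}$, and for $t<0$ the terms $\mathrm e^{k\lambda t}$ with $k\ge1$ also decay. So the whole task reduces to showing
\[
\int_\bR(1+|\sigma|^\alpha)|\widehat h(\sigma)|\,\dd\sigma\le C(\alpha,N)(1+\lambda^\alpha),\qquad \int_\bR|\widehat h|\,\dd\sigma\le C(N).
\]
The $\lambda^\beta$ term then gives $C\lambda^\beta$, which produces the bound $1+\lambda^\alpha+\lambda^\beta$.

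Next I compute $\widehat h$ explicitly:
\[
\widehat h(\sigma)=\frac{1}{\lambda+\mathrm i\sigma}+\sum_{k=1}^N\frac{c_k}{k\lambda-\mathrm i\sigma}.
\]
By scaling $\sigma=\lambda s$ we get $\widehat h(\lambda s)=\lambda^{-1}g(s)$, where $g(s)=\frac1{1+\mathrm is}+\sum_k\frac{c_k}{k-\mathrm is}$ does not depend on $\lambda$. Hence
\[
\int(1+|\sigma|^\alpha)|\widehat h|\,\dd\sigma=\int(1+\lambda^\alpha|s|^\alpha)|g(s)|\,\dd s,
\]
and everything reduces to the $\lambda$-independent claim $\int_\bR(1+|s|^\alpha)|g(s)|\,\dd s<\infty$. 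This scaling is exactly why the constant is uniform in the semigroup parameter.

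The key step, and the main obstacle, is showing that $g(s)=O(|s|^{-N-1})$ as $|s|\to\infty$. Near bounded $s$ the function $g$ is smooth, since its poles $s=\mathrm i$ and $s=-\mathrm ik$ are off the real axis, so only the decay at infinity matters. For large $|s|$, expand in powers of $(\mathrm is)^{-1}$:
\[
\frac1{1+\mathrm is}=\sum_{m\ge0}(-1)^m(\mathrm is)^{-m-1},\qquad \frac{1}{k-\mathrm is}=-\sum_{m\ge0}k^m(\mathrm is)^{-m-1}.
\]
The coefficient of $(\mathrm is)^{-m-1}$ in $g$ is therefore $(-1)^m-\sum_kc_kk^m$, which vanishes for $m=0,\dots,N-1$ by Lemma~\ref{lem:reflection_coeffs}. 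Thus $|g(s)|\le C(N)|s|^{-N-1}$ for $|s|\ge 2N$, with $C(N)$ depending only on the $c_k$. Equivalently, the matching of derivatives $h^{(m)}(0^\pm)$ makes $h\in C^{N-1}$ with $h^{(N)}$ having a jump, which gives the same decay after $N+1$ integrations by parts. Since $N>\alpha$, we have $(1+|s|^\alpha)|s|^{-N-1}\in L^1(\{|s|\ge 2N\})$. Combining this with boundedness of $g$ on $|s|\le2N$ gives $\int(1+|s|^\alpha)|g|\le C(\alpha,N)$, and the case $\alpha=0$ yields the unweighted bound. This completes the argument.
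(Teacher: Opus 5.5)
Your proposal is correct and follows essentially the same route as the paper: the explicit formula for $\cF_1[h(\cdot,\lambda)]$, cancellation of the first $N$ terms of its large-frequency expansion via the Vandermonde conditions of Lemma~\ref{lem:reflection_coeffs}, and then the convolution identity combined with the splitting $1+|\tau|^\alpha\lesssim_\alpha(1+|\tau-\sigma|^\alpha)(1+|\sigma|^\alpha)$. The only cosmetic difference is the order of steps. You rescale $\sigma=\lambda s$ at the outset to isolate the $\lambda$-independent profile $g$. The paper instead first proves the pointwise bound $|\cF_1[h(\cdot,\lambda)](\tau)|\lesssim_N \lambda^{-1}(1+|\tau|/\lambda)^{-N-1}$ and performs the same change of variables at the end.
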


\begin{proof}
First, we compute the temporal Fourier transform of $h(t, \lambda)$ without applying any truncation. Direct integration gives
\begin{equation}\label{eq260423_4}
\begin{aligned}
\cF_1[h(\cdot,\lmb)](\tau) &= \int_0^\infty \mathrm{e}^{-t(\lambda + \mathrm{i}\tau)} \, \mathrm{d}t + \sum_{k=1}^N c_k \int_{-\infty}^0 \mathrm{e}^{t(k\lambda - \mathrm{i}\tau)} \, \mathrm{d}t \\
&= \frac{1}{\lambda + \mathrm{i}\tau} + \sum_{k=1}^N \frac{c_k}{k\lambda - \mathrm{i}\tau}.
\end{aligned}
\end{equation}
For the regime $|\tau| > N\lambda$, each term can be expanded as an absolutely convergent geometric series as 
$$
\cF_1[h(\cdot,\lmb)](\tau) = \sum_{m=N}^\infty \frac{\lambda^m}{(\mathrm{i}\tau)^{m+1}} \left( (-1)^m - \sum_{k=1}^N c_k k^m \right),
$$ where we used Lemma~\ref{lem:reflection_coeffs} to cancel the terms up to  $m = N-1$. Assuming further that $|\tau|/\lambda > 2N$ gives
$$
|\cF_1[h(\cdot,\lmb)](\tau)| \q\lesssim_N\q \sum_{m=N}^\ift\f{(\lmb N)^m}{|\tau|^{m+1}}
\q\lesssim_N\q \frac{1}{\lambda}\left(\frac{\lambda}{|\tau|}\right)^{N+1}.
$$
On the other hand, a direct application of the triangle inequality to \eqref{eq260423_4} gives $\cF_1[h(\cdot,\lmb)](\tau)|\lesssim_N\lmb^{-1}$. In sum, we obtain
\begin{equation}\label{eq260423_3}
    |\cF_1[h(\cdot,\lmb)](\tau)|\lesssim_N\frac{1}{\lambda}\left(1+\frac{|\tau|}{\lambda}\right)^{-N-1}\qd\text{for any }\tau\in\bR.
\end{equation}

We will now prove \eqref{260410549}. Fix $\chi\in C^\ift_c(\bR)$ and set $H(t,\lmb):=\chi(t)h(t,\lmb)$. 
Using
$\cF_1[H(\cdot,\lmb)]
=(2\pi)^{-1}\cF_1[\chi]\ast\cF_1[h(\cdot,\lmb)]$
and \eqref{eq260423_3}, we obtain
\begin{equation}\label{260423334}
\begin{aligned}
\int_{\bR}
|\cF_1[H(\cdot,\lmb)](\tau)|
\dd \tau
\q&\lesssim_N\q
\nrm{\cF_1[\chi]}_{L^1(\bR)}
\int_{\bR}
\frac{1}{\lmb}
\left(
1+\frac{|\tau'|}{\lmb}
\right)^{-N-1}
\dd \tau'
\\
&\lesssim_N\q
\nrm{\cF_1[\chi]}_{L^1(\bR)}.
\end{aligned}
\end{equation}
Moreover, using
$1+|\tau|^\alp
\lesssim_\alp
(1+|\tau-\tau'|^\alp)(1+|\tau'|^\alp)$
together with \eqref{eq260423_3}, we obtain
\begin{equation}\label{260423335}
\begin{aligned}
&\int_{\bR}
(1+|\tau|^\alp)
|\cF_1[H(\cdot,\lmb)](\tau)|
\dd \tau
\\
&\qquad\lesssim_{N,\alp}
\nrm{\chi}_{\cB^\alp(\bR)}
\int_{\bR}
(1+|\tau'|^\alp)
\frac{1}{\lmb}
\left(
1+\frac{|\tau'|}{\lmb}
\right)^{-N-1}
\dd \tau'
\\
&\qquad\lesssim_{N,\alp}
\nrm{\chi}_{\cB^\alp(\bR)}
(1+\lmb^\alp).
\end{aligned}
\end{equation}
Here, the implicit constants depend only on $N$ and $\alp$. 
In the last inequality, we used $N>\alp$ and the change of variables $\tau'=\lmb s$.
 Combining \eqref{260423334} and \eqref{260423335}, we get \eqref{260410549}. 
\end{proof}

We now consider the homogeneous heat equation with damping in a finite time interval. The regularity estimates for the semigroup solution restricted to $[0,T]$  will be inherited from those for the global extension that is obtained by the preceding Fourier estimate for $h$.

\begin{thm}
\label{thm:homogeneous_solution}
Let $d\geq1$ be the spatial dimension, and let $T,\gmm,\lambda\in(0,\ift)$ and $s\in [0,\infty)$. 
For every $v_0\in\cB^s(\bR^d)$, the solution $v(t,\cdot) = \mathrm{e}^{t(\Delta^{\gamma/2}-\lmb)}v_0$ to the homogeneous equation $$
\begin{cases}
    \partial_t v-\Delta^{\gamma/2}v+\lambda v=0 \quad&\text{in }(0,T)\times\mathbb{R}^d,\\
    v(0,\cdot)=v_0 \quad &\text{in }\mathbb{R}^d,
\end{cases}
$$ belongs to $\cB^{ s /\gmm, s }(T)$. 
Moreover, for any $\theta\in [0,1]$,
\begin{align}\label{2607201155}
\lambda^{(1-\theta) s/\gmm}\|v\|_{\cB^{\theta s/\gamma,\theta s}(T)} \,\le\, C(T,s/\gmm,\theta)\left(\|v_0\|_{\cB^{ s }(\mathbb{R}^d)}+\lambda^{s/\gmm}\|v_0\|_{\cB^{0}(\mathbb{R}^d)}\right).
\end{align}
In particular, 
\begin{align}\label{260510326}
\|v\|_{\cB^{s/\gmm, s }(T)}+\lambda^{s/\gmm}\|v\|_{\cB^{0,0}(T)} \,\le\, C(T,s/\gmm)\left(\|v_0\|_{\cB^{ s }(\mathbb{R}^d)}+\lambda^{s/\gmm}\|v_0\|_{\cB^{0}(\mathbb{R}^d)}\right).
\end{align}
\end{thm}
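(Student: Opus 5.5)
We construct an explicit global extension of $v$ and estimate its space-time Fourier transform with Lemma~\ref{lem:fourier_bound}. In spatial Fourier variables the solution is $\cF_d[v(t,\cdot)](\xi)=\mathrm{e}^{-t(|\xi|^\gmm+\lmb)}\cF_d[v_0](\xi)$ for $t\in[0,T]$. We fix an integer $N>s/\gmm$, take the coefficients $c_k$ of Lemma~\ref{lem:reflection_coeffs}, and fix a cutoff $\chi\in C_c^\infty(\bR)$ with $\chi\equiv1$ on $[0,T]$ (supported, say, in $(-1,T+1)$). We then define $V$ through its spatial Fourier transform by
$$
\cF_d[V(t,\cdot)](\xi):=\chi(t)\,h\bigl(t,|\xi|^\gmm+\lmb\bigr)\,\cF_d[v_0](\xi),
$$
with $h$ as in Lemma~\ref{lem:fourier_bound}. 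Since $h(t,\mu)=\mathrm{e}^{-t\mu}$ for $t\ge0$ and $\chi=1$ on $[0,T]$, we have $V|_{[0,T]\times\bR^d}=v$. The space-time transform factorizes as $\cF_{d+1}[V](\tau,\xi)=\cF_1[\chi(\cdot)h(\cdot,\mu_\xi)](\tau)\,\cF_d[v_0](\xi)$ with $\mu_\xi:=|\xi|^\gmm+\lmb>0$.

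We first treat the endpoint $\theta=1$, i.e.\ \eqref{260510326}. Apply Lemma~\ref{lem:fourier_bound} with $\alp=s/\gmm$, $\lmb=\mu_\xi$, and the spatial weight $|\xi|^s$ absorbed via its $\bt$-slot. Since $|\xi|^s\le\mu_\xi^{s/\gmm}$, we take $\bt=s/\gmm$. For each fixed $\xi$ this gives
$$
\int_\bR(1+|\tau|^{s/\gmm}+|\xi|^s)|\cF_{d+1}[V](\tau,\xi)|\,\dd\tau\lesssim \nrm{\chi}_{\cB^{s/\gmm}(\bR)}\,(1+\mu_\xi^{s/\gmm})\,|\cF_d[v_0](\xi)|.
$$
Next we use $\mu_\xi^{s/\gmm}\lesssim_{s/\gmm}|\xi|^s+\lmb^{s/\gmm}$ and integrate in $\xi$. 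The right-hand side becomes $C(T,s/\gmm)(\nrm{v_0}_{\cB^s}+\lmb^{s/\gmm}\nrm{v_0}_{\cB^0})$, since $\nrm{\chi}_{\cB^{s/\gmm}}$ depends only on $T$ and $s/\gmm$. This bounds $\nrm{v}_{\cB^{s/\gmm,s}(T)}$.

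For the term $\lmb^{s/\gmm}\nrm{v}_{\cB^{0,0}(T)}$, we use the same lemma with $\alp=0$. Estimate \eqref{260423334} gives $\int_\bR|\cF_1[\chi h(\cdot,\mu_\xi)]|\,\dd\tau\lesssim_N\nrm{\cF_1\chi}_{L^1}$, uniformly in $\mu_\xi$. Hence $\nrm{V}_{\cB^{0,0}}\lesssim\nrm{v_0}_{\cB^0}$, and multiplying by $\lmb^{s/\gmm}$ gives the claim.

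For general $\theta$ we use the interpolation inequality, Proposition~\ref{2607201245}, applied to the single extension $V$ with endpoints $(0,0)$ and $(s/\gmm,s)$. This gives
$$
\lmb^{(1-\theta)s/\gmm}\nrm{V}_{\cB^{\theta s/\gmm,\theta s}}\le\bigl(\lmb^{s/\gmm}\nrm{V}_{\cB^{0,0}}\bigr)^{1-\theta}\nrm{V}_{\cB^{s/\gmm,s}}^{\theta}.
$$
Both factors are bounded by $C(\nrm{v_0}_{\cB^s}+\lmb^{s/\gmm}\nrm{v_0}_{\cB^0})$ by the global-extension estimates above, not merely the restriction norms. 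Taking the restriction infimum then yields \eqref{2607201155}.

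Membership $v\in\cB^{s/\gmm,s}(T)$ follows from finiteness of the bound. One must also check that $V$ is a tempered distribution whose transform is the stated $L^1_{\rm loc}$ function, which is immediate since that function lies in $L^1$.

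The main obstacle is making sure the constant from Lemma~\ref{lem:fourier_bound} is uniform in the parameter $\mu_\xi$, which ranges over $[\lmb,\infty)$, and depends only on $N$ and $\alp$. This uniformity is exactly what \eqref{eq260423_3} provides after the change of variables. A second point is that the $\lmb$-dependence must appear only through $\lmb^{s/\gmm}\nrm{v_0}_{\cB^0}$, and splitting $\mu_\xi^{s/\gmm}$ handles this.
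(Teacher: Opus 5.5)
Your proposal is correct and follows essentially the same route as the paper. It uses the same extension $\chi(t)\,h(t,|\xi|^\gamma+\lambda)\,\mathcal{F}_d[v_0](\xi)$ and Lemma~\ref{lem:fourier_bound} with $\alpha=\beta=s/\gamma$ together with $|\xi|^s\le(|\xi|^\gamma+\lambda)^{s/\gamma}\lesssim|\xi|^s+\lambda^{s/\gamma}$. It then applies Proposition~\ref{2607201245} to the global extension before taking the infimum. The only difference is cosmetic: you bound $\lambda^{s/\gamma}\|V\|_{\mathcal{B}^{0,0}}$ through the parameter-uniform $L^1$ bound \eqref{260423334}, whereas the paper absorbs $\lambda^{s/\gamma}\le(|\xi|^\gamma+\lambda)^{s/\gamma}$ into the same weighted estimate, and both work.
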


\begin{proof}
In the proof, we explicitly construct a global extension $V \in \cB^{ s /\gamma, s }(\mathbb{R}^{d+1})$ of $v$. 
We fix a cutoff function $\chi \in C_c^\infty(\mathbb{R})$ such that $\chi = 1$ on $[0, T]$. Let $N$ be the integer in the interval $( s /\gmm, s /\gmm+1]$, and let $h$ be the associated function from Lemma~\ref{lem:fourier_bound}. Define $V(t,x)$ through its spatial Fourier transform by
$$
\mathcal{F}_d[V(t,\cdot)](\xi) = \chi(t) h(t, |\xi|^{\gamma}+\lmb) \mathcal{F}_d[v_0](\xi).
$$
For each $t \in [0,T]$, we have $\chi(t)=1$ and $h(t, |\xi|^\gmm+\lmb) = \mathrm{e}^{-t(|\xi|^{\gamma}+\lmb)}$, and hence $\mathcal{F}_d[V(t,\cdot)]$ coincides with the spatial Fourier transform of $v(t,\cdot)$, i.e., $V|_{[0,T]\times\mathbb{R}^d} = v$. Moreover, we can show $V\in\cB^{s/\gmm,s}(\bR^{d+1})$ as follows: by Fubini's theorem and Lemma~\ref{lem:fourier_bound} with $\alp=\bt=s/\gmm$, we get
\begin{equation}\begin{aligned}\label{eq260424_1}
&\|V\|_{\cB^{s/\gmm, s }(\mathbb{R}^{d+1})}+\lambda^{s/\gmm}\|V\|_{\cB^{0,0}(\mathbb{R}^{d+1})}\\
\lesssim&_{s/\gamma} \int_{\mathbb{R}^d}\left[ \int_{\mathbb{R}} (1 + |\tau|^{s/\gmm} +(|\xi|^\gmm+\lambda)^{s/\gmm}) |\cF_1[\chi(\cdot)h(\cdot,|\xi|^{\gamma}+\lambda)](\tau)|\, \mathrm{d}\tau\right]|\mathcal{F}_d[v_0](\xi)|\, \mathrm{d}\xi \\
\lesssim&_{s/\gmm}\nrm{\chi}_{\cB^{s/\gmm}(\bR)}\int_{\mathbb{R}^d} \left(1 +(|\xi|^\gamma+\lambda)^{s/\gmm}\right) |\mathcal{F}_d[v_0](\xi)|\, \mathrm{d}\xi.
\end{aligned} \end{equation}
Applying 
$\,(|\xi|^\gamma+\lambda)^{s/\gmm}
\,\lesssim_{s/\gmm}\,
|\xi|^{s}+\lambda^{s/\gmm}\,$ to the above relation gives  
\begin{equation}\label{eq260519_1}
\|V\|_{\cB^{s/\gmm, s }(\mathbb{R}^{d+1})}+\lambda^{s/\gmm}\|V\|_{\cB^{0,0}(\mathbb{R}^{d+1})}\,\lesssim_{T,s/\gmm}\,\nrm{v_0}_{\cB^ s (\bR^d)}+\lmb^{s/\gmm}\nrm{v_0}_{\cB^0(\bR^d)}
\end{equation} by replacing the dependency on $\nrm{\chi}_{\cB^{s/\gmm}(\bR)}$ in \eqref{eq260424_1} with the dependency on $T$ and $s/\gmm$.
This gives
\begin{alignat*}{2}
\lambda^{(1-\theta) s/\gmm}\|V\|_{\cB^{\theta s/\gamma,\theta s}(\bR^{d+1})}\,&\leq && \left(\|V\|_{\cB^{s/\gmm, s }(\mathbb{R}^{d+1})}\right)^{\theta}\left(\lambda^{s/\gmm}\|V\|_{\cB^{0,0}(\mathbb{R}^{d+1})}\right)^{1-\theta}\\
&\lesssim_{T,s/\gamma,\tht}\,&&\nrm{v_0}_{\cB^ s (\bR^d)}+\lmb^{s/\gmm}\nrm{v_0}_{\cB^0(\bR^d)},
\end{alignat*} where we used the interpolating relation in Proposition~\ref{2607201245} and Young's inequality. Since \eqref{eq260519_1} and the above imply \eqref{260510326} and \eqref{2607201155}, respectively, the proof is completed.
\end{proof} 

In the subsequent arguments, exponential factors $\mathrm{e}^{\lmb t},\mathrm{e}^{-\lmb t}$ in time will be used to introduce or remove the damping term $\lmb v$. The following lemma shows that multiplication by such temporal functions is bounded in the relevant Barron spaces.

\begin{lem}
\label{lem:schwartz_multiplier}
Let $T\in (0,\ift)$ and $\alpha_1,\alpha_2,\beta \in [0,\infty)$. Then we have the following.\medskip
\begin{enumerate}
\item[(i)] For any $\chi \in \cB^{\alpha_1}(\mathbb{R})$ and $v \in \cB^{\alpha_2,\beta}(\mathbb{R}^{d+1})$, the pointwise product $(t,x)\mapsto \chi(t)v(t,x)$ 
belongs to $\cB^{\alpha_1\wedge\alpha_2,\beta}(\domain)$ and satisfies
$$
\|\chi v\|_{\cB^{\alpha_1\wedge\alpha_2,\beta}(\mathbb{R}^{d+1})} \,\leq\, C(\alpha_1,\alpha_2)\|\chi\|_{\cB^{\alpha_1}(\mathbb{R})}\|v\|_{\cB^{\alpha_2,\beta}(\mathbb{R}^{d+1})}.$$
\item[(ii)] For any $\phi\in \cB^{\alp_1}(T)$ and $w\in \cB^{{\alp_2},\bt}(T)$, the pointwise product $(t,x)\mapsto\phi(t)w(t,x)$ belongs to $\cB^{\alp_1\wedge\alp_2,\bt}(T)$ and satisfies
$$\nrm{\phi w}_{\cB^{\alp_1\wedge\alp_2,\bt}(T)}\,\leq\,
C(\alp_1,\alp_2)\|\phi\|_{\cB^{\alpha_1}(T)}\nrm{w}_{\cB^{\alp_2,\bt}(T)}.$$
\end{enumerate}

\end{lem}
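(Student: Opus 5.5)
The plan is to argue on the Fourier side for (i) and then pass to the restriction spaces by taking extensions, exactly as in Proposition~\ref{260516347}(i).

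For (i), view $\chi$ as a function of $(t,x)$ that is constant in $x$. Its space-time Fourier transform is then $\cF_1[\chi](\tau)\,(2\pi)^d\delta_0(\xi)$. The product formula therefore gives
\[
\cF_{d+1}[\chi v](\tau,\xi)=\frac{1}{2\pi}\int_{\bR}\cF_1[\chi](\tau')\,\cF_{d+1}[v](\tau-\tau',\xi)\,\dd\tau'.
\]
The factor $(2\pi)^{-(d+1)}$ combines with the $(2\pi)^d$ from the delta mass, so no dimension-dependent factor appears. To justify this identity rigorously, I would check it first for Schwartz $v$ and extend by density in the weighted $L^1$ norm. Alternatively, I would verify it directly: since $\cF_1[\chi]\in L^1$ and $\cF_{d+1}[v]\in L^1$, the right-hand side lies in $L^1(\bR^{d+1})$, and its inverse Fourier transform equals $\chi(t)v(t,x)$ by Fubini and pointwise Fourier inversion (Remark~\ref{rem:fourier of S'}).

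Next, set $\alpha:=\alpha_1\wedge\alpha_2$ and use the elementary weight inequality
\[
1+|\tau|^{\alpha}+|\xi|^\beta\lesssim_{\alpha_1,\alpha_2}\bigl(1+|\tau'|^{\alpha_1}\bigr)\bigl(1+|\tau-\tau'|^{\alpha_2}+|\xi|^\beta\bigr).
\]
This follows from $|\tau|^\alpha\lesssim_\alpha|\tau'|^\alpha+|\tau-\tau'|^\alpha$ together with $a^{\alpha}\le 1+a^{\alpha_i}$ for $\alpha\le\alpha_i$, which is the bound used in Proposition~\ref{lem:self-embedding}. Integrating $|\cF_{d+1}[\chi v]|$ against this weight, applying Tonelli, and making the change of variables $\tau\mapsto\tau-\tau'$, I get
\[
\|\chi v\|_{\cB^{\alpha,\beta}(\bR^{d+1})}\le C(\alpha_1,\alpha_2)\,\frac{1}{2\pi}\,\|\chi\|_{\cB^{\alpha_1}(\bR)}\,\|v\|_{\cB^{\alpha_2,\beta}(\bR^{d+1})}.
\]
The factor $\frac{1}{2\pi}$ can be absorbed into the constant. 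This proves (i).

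For (ii), take arbitrary extensions $\Phi\in\cB^{\alpha_1}(\bR)$ of $\phi$ and $W\in\cB^{\alpha_2,\beta}(\bR^{d+1})$ of $w$. Then $\Phi W$ extends $\phi w$, so by (i)
\[
\|\phi w\|_{\cB^{\alpha,\beta}(T)}\le\|\Phi W\|_{\cB^{\alpha,\beta}(\bR^{d+1})}\le C\|\Phi\|_{\cB^{\alpha_1}(\bR)}\|W\|_{\cB^{\alpha_2,\beta}(\bR^{d+1})}.
\]
Taking the infimum over $\Phi$ and then over $W$ gives (ii).

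I expect the main delicate point to be the justification of the convolution identity. The function $\chi$ is not integrable in $x$, so the $(d+1)$-dimensional product rule cannot be applied naively. Computing the inverse transform of the one-dimensional convolution directly via Fubini, as described above, avoids distribution theory and is what I would do first.
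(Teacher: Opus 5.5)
Your proposal is correct and follows essentially the same route as the paper. Both write $\cF_{d+1}[\chi v]$ as a one-dimensional temporal convolution, split the weight as $1+|\tau|^{\alpha_1\wedge\alpha_2}+|\xi|^\beta\lesssim(1+|\cdot|^{\alpha_1})(1+|\cdot|^{\alpha_2}+|\xi|^\beta)$, apply Tonelli, and pass to (ii) through extensions. The only differences are cosmetic: the paper picks near-optimal extensions (within a factor $2$) instead of taking an infimum, and it states the convolution identity without the Fubini/inversion justification you supply.
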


\begin{proof}
We first prove (i). Using $2\pi\mathcal{F}_{d+1}[\chi v](\tau, \xi) = (\mathcal{F}_{1}[\chi] *_{\tau} \mathcal{F}_{d+1}[v])(\tau, \xi)$ we have
\begin{equation} \label{eq:multiplier_integral}
    \begin{aligned}
    \|\chi v&\|_{\cB^{\alpha_1\wedge\alpha_2,\beta}(\mathbb{R}^{d+1})}\\
    &\lesssim\int_{\mathbb{R}^d} \int_{\mathbb{R}} \int_{\mathbb{R}} \paren{1 + |\tau|^{\alpha_1\wedge\alpha_2} + |\xi|^{\beta}} |\mathcal{F}_1[\chi](\tau - \tau')| |\mathcal{F}_{d+1}[v](\tau', \xi)| \, \mathrm{d}\tau' \mathrm{d}\tau \mathrm{d}\xi.
    \end{aligned}
\end{equation}
By the inequality $t^a\leq 1+t^b$ which holds for any $t>0$ and $0\leq a\leq b$, we have
$$\begin{alignedat}{2}
    |\tau|^{\alpha_1\wedge\alpha_2} \lesssim_{\alpha_1,\alpha_2}  |\tau-\tau'|^{\alpha_1\wedge\alpha_2} +\ |\tau'|^{\alpha_1\wedge\alpha_2} \,&\leq&&\,2+|\tau - \tau'|^{\alpha_1}+ |\tau'|^{\alpha_2}
\end{alignedat}$$ and hence $1 + |\tau|^{\alpha_1\wedge\alpha_2} + |\xi|^{\beta}\,\lesssim_{\alp_1,\alp_2}(1+|\tau-\tau'|^{\alp_1})(1+|\tau'|^{\alp_2}+|\xi|^{\bt})$.
Applying this inequality to \eqref{eq:multiplier_integral} and the change of variable give
$$\begin{aligned}
    \|\chi v\|_{\cB^{\alpha_1\wedge\alpha_2,\beta}(\mathbb{R}^{d+1})}\lesssim_{\alpha_1,\alpha_2}\|\chi\|_{\cB^{\alpha_1}(\mathbb{R})}\|v\|_{\cB^{\alpha_2,\beta}(\mathbb{R}^{d+1})},
\end{aligned}$$ where we used Fubini-Tonelli's theorem to interchange the order of integration. This proves (i).

To prove (ii), let $\Phi\in\cB^{\alp_1}(\bR)$ and $W\in\cB^{\alp_2,\bt}(\bR^{d+1})$ be extensions of given $\phi\in\cB^{\alp_1}(T)$ and $w\in\cB^{\alp_2,\bt}(T)$, respectively, such that $\Phi=\phi$ on $[0,T]$, $W=w$ on $[0,T]\times\bR^d$, and $\nrm{\Phi}_{\cB^{\alp_1}(\bR)}\leq
2\nrm{\phi}_{\cB^{\alp_1}(T)}$, $\nrm{W}_{\cB^{\alp_2,\bt}(\bR^{d+1})}\leq
2\nrm{w}_{\cB^{\alp_2,\bt}(T)}$.
It is obvious that the product $\Phi W$ is an extension of $\phi w$ and, according to (i), estimated as  
\begin{align*}
\begin{aligned}
&\|\phi w\|_{\cB^{\alpha_1\wedge\alpha_2,\beta}(T)}\leq \|\Phi W\|_{\cB^{\alpha_1\wedge\alpha_2,\beta}(\mathbb{R}^{d+1})}\\ \lesssim_{\alp_1,\alp_2}& \|\Phi\|_{\cB^{\alpha_1}(\mathbb{R})}\|W\|_{\cB^{\alpha_2,\beta}(\mathbb{R}^{d+1})}\leq4\|\phi\|_{\cB^{\alpha_1}(T)}\|w\|_{\cB^{\alpha_2,\beta}(T)}.
\end{aligned}
\end{align*} 
This completes the proof. 
\end{proof}

We next consider the inhomogeneous equation on the full space-time domain, in which the solution gains enhanced regularity from the source term $f$.

\begin{thm}
\label{thm:global_shifted_mr} Let $\gmm,\lmb\in(0,\ift)$ and $\alpha,\beta\in [0,\infty)$. 
For any $f\in \cB^{\alpha,\beta}(\mathbb{R}^{d+1})$, the function $v\in \mathcal{S}'(\bR^{d+1})$ satisfying the relation
$$
\mathcal{F}_{d+1}[v](\tau,\xi)
=\frac{\mathcal{F}_{d+1}[f](\tau,\xi)}{\lambda+\mathrm{i}\tau+|\xi|^{\gamma}}
$$
belongs to $\cB^{1+\alpha,\gamma+\beta}(\mathbb{R}^{d+1})$ and solves 
$\,\partial_t v-\Delta^{\gamma/2}v+\lambda v=f\,$ in $\,\cS'(\mathbb{R}^{d+1})\,$.
Moreover, it holds that
\begin{equation}\label{eq260521_1}
(\lambda \wedge 1)\|v\|_{\cB^{1+\alpha,\gamma+\beta}(\mathbb{R}^{d+1})}+\lambda \|v\|_{\cB^{\alpha,\beta}(\bR^{d+1})}\leq 3\|f\|_{\cB^{\alpha,\beta}(\bR^{d+1})}\,.
\end{equation}

\end{thm}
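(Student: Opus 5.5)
The proof is a pointwise multiplier estimate on the Fourier side. Write $m(\tau,\xi):=\lambda+\mathrm{i}\tau+|\xi|^\gamma$. Since $\lambda>0$ and $|\xi|^\gamma\ge0$, we have $\operatorname{Re} m\geq\lambda>0$, so $1/m$ is bounded by $1/\lambda$ and measurable. Therefore $\cF_{d+1}[f]/m$ is locally integrable, and its weighted $L^1$ norm will be shown finite below. Define $v:=\cF_{d+1}^{-1}[\cF_{d+1}[f]/m]$, which makes sense by Remark~\ref{rem:fourier of S'}.

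The equation follows by multiplying $\cF_{d+1}[v]$ by $m$. With the paper's convention, $\cF_{d+1}[\partial_t v]=\mathrm{i}\tau\,\cF_{d+1}[v]$. We interpret $\Delta^{\gamma/2}$ as the multiplier $-|\xi|^\gamma$, consistent with the semigroup $\mathrm{e}^{t(\Delta^{\gamma/2}-\lambda)}$ having symbol $\mathrm{e}^{-t(|\xi|^\gamma+\lambda)}$ in Theorem~\ref{thm:homogeneous_solution}. Then $\cF_{d+1}[\partial_t v-\Delta^{\gamma/2}v+\lambda v]=m\,\cF_{d+1}[v]=\cF_{d+1}[f]$, and all products here are locally integrable functions of polynomial growth times an $L^1$ function, so this identity holds in $\cS'$.

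For the estimate, use the elementary bounds $|m|\geq \lambda$, $|m|\geq|\tau|$ and $|m|\geq |\xi|^\gamma$, which hold because $|m|\ge\max(\operatorname{Re}m,|\operatorname{Im}m|)$. Then
\[
\lambda\,(1+|\tau|^\alpha+|\xi|^\beta)\frac{|\cF_{d+1}[f]|}{|m|}\leq (1+|\tau|^\alpha+|\xi|^\beta)|\cF_{d+1}[f]|,
\]
which, after integration, gives $\lambda\|v\|_{\cB^{\alpha,\beta}}\le\|f\|_{\cB^{\alpha,\beta}}$.

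For the higher norm, expand the weight $1+|\tau|^{1+\alpha}+|\xi|^{\gamma+\beta}$ divided by $|m|$ term by term:
\[
\frac{|\tau|^{1+\alpha}}{|m|}\le|\tau|^\alpha,\qquad \frac{|\xi|^{\gamma+\beta}}{|m|}\le|\xi|^\beta,\qquad \frac{1}{|m|}\le\frac1\lambda .
\]
Multiplying by $\lambda\wedge1$ gives
\[
(\lambda\wedge1)\,\frac{1+|\tau|^{1+\alpha}+|\xi|^{\gamma+\beta}}{|m|}\le 1+|\tau|^\alpha+|\xi|^\beta ,
\]
using $(\lambda\wedge1)/\lambda\le1$ and $\lambda\wedge1\le1$. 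Hence $(\lambda\wedge1)\|v\|_{\cB^{1+\alpha,\gamma+\beta}}\le\|f\|_{\cB^{\alpha,\beta}}$. Adding the two bounds gives \eqref{eq260521_1} with constant $2$, which is below the stated $3$.

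There is no real obstacle. The only points needing care are the sign convention for $\Delta^{\gamma/2}$ and the justification that $v$ is a tempered distribution whose Fourier transform is an $L^1$ function. The latter follows because $\cF_{d+1}[v]\in L^1$ by the bound just proven, together with Remark~\ref{rem:fourier of S'}.
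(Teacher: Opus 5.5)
Your proposal is correct and is essentially the paper's proof: a pointwise bound of the weights against the multiplier $1/(\lambda+\mathrm{i}\tau+|\xi|^\gamma)$, using $|\lambda+\mathrm{i}\tau+|\xi|^\gamma|\geq\max\{\lambda,|\tau|,|\xi|^\gamma\}$. The only difference is bookkeeping. You multiply by $\lambda\wedge1$ before bounding and get the total constant $2$, whereas the paper uses the cruder factor $1+1/\lambda\leq 2\max\{\lambda^{-1},1\}$ and arrives at $3$.
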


\begin{proof}
By construction of $v$, we have $(\partial_t-\Delta^{\gamma/2}+\lambda)v=f$ in $\mathcal{S}'(\mathbb{R}^{d+1})$ and $\lambda \|v\|_{\cB^{\alpha,\beta}(\mathbb{R}^{d+1})}\leq \|f\|_{\cB^{\alpha,\beta}(\mathbb{R}^{d+1})}$. For $\|v\|_{\cB^{1+\alpha,\gamma+\beta}(\mathbb{R}^{d+1})}$, we observe that any $(\tau,\xi)\in\mathbb{R}\times\mathbb{R}^d$ satisfies
$$
\left|\frac{1+|\tau|^{1+\alpha}+|\xi|^{\gamma+\beta}}{\lambda+\mathrm{i}\tau+|\xi|^{\gamma}}\right|
\,\leq\, \left(1+\frac{1}{\lambda}\right)(1+|\tau|^{\alpha}+|\xi|^{\beta}),
$$
which gives $\|v\|_{\cB^{1+\alpha,\gamma+\beta}(\mathbb{R}^{d+1})}
\leq 2\max\{\lambda^{-1}, 1\}\|f\|_{\cB^{\alpha,\beta}(\bR^{d+1})}$. These complete the proof.
\end{proof}

Before restricting the preceding global solution to a finite time interval, we record the uniqueness statement required for the initial-value problem. 

\begin{lem}[Uniqueness in $\cB^{1,\gmm}(T)$]
\label{lem:uniqueness_homogeneous_heat}
Let $T,\gamma\in(0,\infty)$ and $\lmb\in[0,\ift)$. If $q\in \cB^{1,\gmm}(T)$ satisfies
$$
\partial_t q-\Delta^{\gmm/2}q+\lmb q=0\quad 
 \text{in }(0,T)\times\bR^d,\qquad
q(0,\cdot)=0\quad 
 \text{in }\bR^d.
$$
Then $q=0$ on \([0,T]\times\bR^d\).
\end{lem}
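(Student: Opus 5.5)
The approach is to take the spatial Fourier transform at each fixed time and reduce the equation to a family of scalar ODEs in $t$, parametrized by $\xi$. Let $Q\in\cB^{1,\gmm}(\bR^{d+1})$ be any extension of $q$. Then $\widehat Q:=\cF_{d+1}[Q]$ satisfies $(1+|\tau|+|\xi|^\gmm)\widehat Q\in L^1(\bR^{d+1})$. As in Remark~\ref{26.02.22.15.15}, for every $t$ the spatial transform is
\[
\hat q(t,\xi):=\cF_d[Q(t,\cdot)](\xi)=\frac1{2\pi}\int_\bR \mathrm{e}^{\mathrm{i}t\tau}\widehat Q(\tau,\xi)\,\dd\tau .
\]
This is well defined for a.e.\ $\xi$, since $\widehat Q(\cdot,\xi)\in L^1(\bR)$ for a.e.\ $\xi$ by Fubini. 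Moreover, $(1+|\tau|)\widehat Q(\cdot,\xi)\in L^1(\bR)$ for a.e.\ $\xi$, so $t\mapsto \hat q(t,\xi)$ is $C^1$ on $\bR$ with derivative $\frac1{2\pi}\int \mathrm{i}\tau \mathrm{e}^{\mathrm{i}t\tau}\widehat Q\,\dd\tau$. The integrability of $|\xi|^\gmm\widehat Q$ gives $|\xi|^\gmm \hat q(t,\cdot)\in L^1(\bR^d)$ uniformly in $t$. Hence $\Delta^{\gmm/2}q(t,\cdot)$, understood as the Fourier multiplier $-|\xi|^\gmm$ (the sign convention that makes the equation parabolic), has the spatial transform $-|\xi|^\gmm\hat q(t,\xi)$.

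Next I will make precise the sense in which the equation holds. By Proposition~\ref{prop:B-to-Ctx}, $q$, $\partial_t q$ and $\Delta^{\gmm/2}q$ are bounded continuous functions on $[0,T]\times\bR^d$. I interpret the equation pointwise in $(0,T)\times\bR^d$, or equivalently in the distributional sense on that open set. Fixing $t\in(0,T)$ and testing against Schwartz functions in $x$, we can pass to the spatial Fourier side. Since all terms are $L^1$ in $\xi$, this gives, for a.e.\ $\xi$ and all $t\in(0,T)$,
\[
\partial_t\hat q(t,\xi)+(|\xi|^\gmm+\lmb)\hat q(t,\xi)=0 .
\]
The exceptional $\xi$-set can be chosen independently of $t$. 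To see this, take it first for rational $t$, where the identity holds as an identity of $L^1(\bR^d)$ functions, and then use continuity in $t$ of both sides; continuity follows from dominated convergence with the dominating function $(1+|\tau|+|\xi|^\gmm)|\widehat Q|$. The initial condition similarly gives $\hat q(0,\xi)=0$ for a.e.\ $\xi$, by continuity up to $t=0$.

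For each such $\xi$ we now have a linear scalar ODE with a $C^1$ solution on $[0,T]$ and zero initial value. Therefore $\hat q(t,\xi)=\mathrm{e}^{-t(|\xi|^\gmm+\lmb)}\hat q(0,\xi)=0$ for all $t\in[0,T]$. It follows that $\cF_d[q(t,\cdot)]=0$ in $L^1(\bR^d)$ for each $t$, and Fourier inversion (Remark~\ref{rem:fourier of S'}(ii)) yields $q(t,\cdot)\equiv0$.

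The main obstacle is the bookkeeping in the second step. The equation is only posed on the open cylinder, whereas the Fourier information lives on the extension $Q$, which need not satisfy the equation outside $[0,T]$. The point to check is that restricting the identity to $t\in(0,T)$ is legitimate before transforming only in $x$. This is why I transform only spatially and never in time, and why I rely on the $t$-continuity supplied by $(1+|\tau|)\widehat Q\in L^1$ to fix a single null set in $\xi$.
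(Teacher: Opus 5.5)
Your proposal is correct and follows essentially the same route as the paper. You take an extension $Q$ and form the partial inverse transform in $\tau$, which is $C^1$ in $t$ for a.e.\ $\xi$ because $(1+|\tau|)\cF_{d+1}[Q](\cdot,\xi)\in L^1(\bR)$; you then reduce to the scalar ODE $\partial_t\hat q+(|\xi|^\gmm+\lmb)\hat q=0$ on $(0,T)$ with zero initial value and conclude by Fourier inversion. Your rational-times argument for fixing a single $\xi$-null set is just a more explicit version of the paper's ``for a.e.\ $\xi$, the identity holds in $\cD'(0,T)$''. The only slip is attributing the boundedness and continuity of $\Delta^{\gmm/2}q$ to Proposition~\ref{prop:B-to-Ctx}, which covers only classical derivatives; this instead follows directly from $|\xi|^\gmm\cF_{d+1}[Q]\in L^1$, as you already noted.
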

\begin{proof}
Let \(Q\in \cB^{1,\gmm}(\bR^{d+1})\) be an extension of \(q\). By Fubini's
theorem, for a.e. \(\xi\in\bR^d\), the functions $\tau\mapsto \cF_{d+1}[Q](\tau,\xi)$, $\tau\mapsto \tau\,\cF_{d+1}[Q](\tau,\xi)$ belong to \(L^1(\bR)\). For such \(\xi\), define
\(\tld Q(t,\xi):=\cF_1^{-1}\left[\cF_{d+1}[Q](\cdot,\xi)\right](t)\). Then \(t\mapsto \tld Q(t,\xi)\) belongs to \(C^1(\bR)\), by dominated
convergence and the estimate \(|\mathrm{e}^{\mathrm{i}h\tau}-1|/|h|\leq |\tau|\). Moreover,
for every \(t\in\bR\), \(\tld Q(t,\cdot)\) agrees with
\(\cF_d[Q(t,\cdot)]\) in \(\cS'(\bR^d)\). In particular, for
\(t\in[0,T]\), $\tld Q(t,\cdot)=\cF_d[q(t,\cdot)]$ in $\cS'(\bR^d)$.

Taking the spatial Fourier transform of the equation for \(q\) on \((0,T)\times\bR^d\), we obtain, for a.e. \(\xi\in\bR^d\),
\[
    \partial_t \tld Q(t,\xi)
    +
    \paren{|\xi|^\gmm+\lmb}\tld Q(t,\xi)
    =
    0
    \quad\text{in }\cD'(0,T).
\]
Since \(t\mapsto \tld Q(t,\xi)\) is \(C^1\), this identity holds pointwise in
\(t\in(0,T)\). 
Hence $\tld Q(t,\xi)
    =
    \mathrm{e}^{-t\paren{|\xi|^\gmm+\lmb}}\tld Q(0,\xi)$ for $t\in[0,T]$.
By the initial condition \(q(0,\cdot)=0\), $\tld Q(0,\xi)
    =
    \cF_d[q(0,\cdot)](\xi)
    =
    0$ for almost all $\xi\in\mathbb{R}^d$.
Therefore \(\tld Q(t,\xi)=0\) for every \(t\in[0,T]\) and for a.e. \(\xi\in\bR^d\). 
Thus \(q(t,\cdot)=0\) in \(\cS'(\bR^d)\) for every \(t\in[0,T]\). 
Since \(q\in C([0,T]\times\bR^d)\), we conclude that \(q=0\) on \([0,T]\times\bR^d\) pointwise. 
This completes the proof.
\end{proof}

We now apply the temporal multiplier estimate (Lemma~\ref{lem:schwartz_multiplier}) to the finite-time restriction of the global solution in Theorem~\ref{thm:global_shifted_mr}. Combining this with the homogeneous correction in Theorem~\ref{thm:homogeneous_solution}, we obtain the maximal regularity for the finite-time solution with zero initial data.

\begin{thm}
\label{thm:finite_mr_unshifted}
Let \(T,\gmm,\lmb\in(0,\infty)\) and \(s\in[0,\infty)\).
For every \(f\in \cB^{s/\gmm,s}(T)\), there exists a unique
\(v\in \cB^{1+s/\gmm,\gmm+s}(T)\) satisfying
\[
\begin{cases}
    \partial_t v-\Delta^{\gmm/2} v+\lmb v=f
        &\quad\text{in }(0,T)\times\mathbb R^d,\\
    v(0,\cdot)=0
        &\quad\text{in }\mathbb R^d.
\end{cases}
\]
Moreover, for any $\theta\in[0,1]$,
\begin{align}\label{2607201144}
\lmb^{1-\theta}\|v\|_{\cB^{\theta+s/\gmm,\theta\gamma+s}(T)}\leq C(T,\gamma,s,\theta)(1+\lambda^{s/\gamma})\|f\|_{\cB^{s/\gmm,s}(T)}.
\end{align}
In particular, 
\begin{equation}\label{eq260429_4}
\|v\|_{\cB^{1+s/\gmm,\gmm+s}(T)}
+
\lmb\|v\|_{\cB^{s/\gmm,s}(T)}
\le
C(T,\gmm,s)\bigl(1+\lmb^{s/\gmm}\bigr)
\|f\|_{\cB^{s/\gmm,s}(T)}.
\end{equation}
\end{thm}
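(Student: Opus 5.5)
The plan follows the strategy announced in the overview: solve the problem globally, restrict to $[0,T]$, and remove the wrong initial value with a homogeneous correction.

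\textbf{Step 1: global solution.} Fix $f\in\cB^{s/\gmm,s}(T)$ and pick an extension $F\in\cB^{s/\gmm,s}(\bR^{d+1})$ with $\|F\|\le 2\|f\|_{\cB^{s/\gmm,s}(T)}$. Apply Theorem~\ref{thm:global_shifted_mr} with $(\alpha,\beta)=(s/\gmm,s)$ to get $W\in\cB^{1+s/\gmm,\gmm+s}(\bR^{d+1})$ solving $\partial_tW-\Delta^{\gmm/2}W+\lmb W=F$ in $\cS'(\bR^{d+1})$. Estimate \eqref{eq260521_1} gives
\[
\|W\|_{\cB^{1+s/\gmm,\gmm+s}}\lesssim(1+\lmb^{-1})\|f\|,\qquad \lmb\|W\|_{\cB^{s/\gmm,s}}\lesssim\|f\|.
\]
Set $w:=W|_{[0,T]\times\bR^d}$. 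Then $w$ solves the equation on $(0,T)$, but its initial value is $w_0:=W(0,\cdot)$ rather than $0$.

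\textbf{Step 2: initial trace and correction.} By Remark~\ref{26.02.22.15.15}, $\|w_0\|_{\cB^{\gmm+s}(\bR^d)}\lesssim\|W\|_{\cB^{1+s/\gmm,\gmm+s}}$ and $\|w_0\|_{\cB^{s}}\lesssim\|W\|_{\cB^{s/\gmm,s}}$. Let $z(t)=\mathrm{e}^{t(\Delta^{\gmm/2}-\lmb)}w_0$. Theorem~\ref{thm:homogeneous_solution}, applied with $s$ replaced by $\gmm+s$, gives $z\in\cB^{1+s/\gmm,\gmm+s}(T)$ together with the bound \eqref{2607201155}. Define $v:=w-z$. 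It lies in $\cB^{1+s/\gmm,\gmm+s}(T)$, solves the equation, and has $v(0)=0$. Uniqueness follows from Lemma~\ref{lem:uniqueness_homogeneous_heat}: the difference of two solutions lies in $\cB^{1+s/\gmm,\gmm+s}(T)\embed\cB^{1,\gmm}(T)$ by Proposition~\ref{lem:self-embedding}.

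\textbf{Step 3: the $\lmb$-weighted estimate \eqref{2607201144}.} This is the main obstacle, because the constant must be uniform in $\lmb\in(0,\infty)$ apart from the factor $1+\lmb^{s/\gmm}$.

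For $w$, I use Proposition~\ref{2607201245}, interpolating between $\cB^{s/\gmm,s}$ and $\cB^{1+s/\gmm,\gmm+s}$ with parameter $\theta$:
\[
\lmb^{1-\theta}\|W\|_{\theta}\le(\lmb\|W\|_0)^{1-\theta}\|W\|_1^\theta .
\]
For $\lmb\ge1$ this is $\lesssim\|f\|$.

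For $z$, Theorem~\ref{thm:homogeneous_solution} with exponent $\gmm+s$ controls $\lmb^{(1-\theta')(1+s/\gmm)}\|z\|_{\cB^{\theta'(1+s/\gmm),\theta'(\gmm+s)}}$ by
\[
\|w_0\|_{\cB^{\gmm+s}}+\lmb^{1+s/\gmm}\|w_0\|_{\cB^0}.
\]
The term $\lmb^{1+s/\gmm}\|w_0\|_{\cB^0}\le\lmb^{s/\gmm}\cdot\lmb\|W\|_{\cB^{0,0}}\lesssim\lmb^{s/\gmm}\|f\|$ is exactly where the factor $1+\lmb^{s/\gmm}$ enters.

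There is a mismatch: the target index is $(\theta+s/\gmm,\theta\gmm+s)$, while the homogeneous bound is written along the ray through the origin. I would handle this by choosing $\theta'$ with $\theta'(1+s/\gmm)=\theta+s/\gmm$. Then the space index matches as well, and the power of $\lmb$ is $(1-\theta')(1+s/\gmm)=1-\theta$. So the two scales coincide and \eqref{2607201155} gives the needed bound directly.

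For small $\lmb\le1$, I would instead absorb $\lmb^{1-\theta}\le1$. The remaining difficulty is that $(1+\lmb^{-1})$ blows up, so one cannot use $\lmb$ directly. In that regime I would conjugate with the weight $\mathrm{e}^{\pm t}$, using Lemma~\ref{lem:schwartz_multiplier} with a cutoff times $\mathrm{e}^{(1-\lmb)t}$, whose $\cB^{\alpha}(T)$ norm is bounded uniformly for $\lmb\in(0,1]$. This reduces to the case with damping parameter $1$ at a cost $C(T)$. The case $\theta=1$ then gives \eqref{eq260429_4} after combining with $\theta=0$.
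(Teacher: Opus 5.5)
Your proposal is correct and follows essentially the same route as the paper. The shared steps are: the global solution via Theorem~\ref{thm:global_shifted_mr}, restriction to $[0,T]$, and a homogeneous correction through Theorem~\ref{thm:homogeneous_solution} at level $\gamma+s$ with the same reparametrization $\theta'(1+s/\gamma)=\theta+s/\gamma$. Both arguments also use the bound $\lambda^{1+s/\gamma}\|w_0\|_{\cB^0}\lesssim\lambda^{s/\gamma}\|f\|$, which produces the factor $1+\lambda^{s/\gamma}$, and obtain uniqueness from Lemma~\ref{lem:uniqueness_homogeneous_heat}.

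The only difference is how you get a bound uniform in $\lambda$. You split into $\lambda\ge1$ and $\lambda\le1$, handling small $\lambda$ by conjugating with $\mathrm{e}^{(1-\lambda)t}$. The paper avoids the split: it solves the global problem with damping $\lambda+1$ for the datum $\chi_-f$ and then multiplies by $\chi_+=\mathrm{e}^{t}$, so $(\lambda+1)\wedge1=1$ yields a $\lambda$-uniform bound in one step.
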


\begin{proof}
Set \(a:=s/\gmm\). 
Throughout the proof, $C$ denotes a constant depending only on $T$, $\gamma$, and $s$.

\medskip

\noindent\textbf{Step 1. An inhomogeneous solution.}
Choose \(\chi_-,\chi_+\in C_c^\infty(\bR)\) such that
\[
    \chi_-(t)=\mathrm{e}^{-t},
    \qquad
    \chi_+(t)=\mathrm{e}^t
    \quad\text{on a neighborhood of }[0,T].
\]
By Lemma~\ref{lem:schwartz_multiplier}-(ii), $\|\chi_- f\|_{\cB^{a,s}(T)}
    \leq
    C\|f\|_{\cB^{a,s}(T)}$.
Choose an extension \(F\in\cB^{a,s}(\bR^{d+1})\) of \(\chi_- f\) such that $\|F\|_{\cB^{a,s}(\bR^{d+1})}
    \leq
    2\|\chi_- f\|_{\cB^{a,s}(T)}$.
By Theorem~\ref{thm:global_shifted_mr}, there exists
\(U\in\cB^{1+a,\gmm+s}(\bR^{d+1})\) solving
\[
    \partial_t U-\Delta^{\gmm/2}U+(\lmb+1)U=F
    \quad\text{in }\bR^{d+1},
\]
and satisfying
\begin{equation}\label{eq:finite-mr-U}
    \|U\|_{\cB^{1+a,\gmm+s}(\bR^{d+1})}
    +
    (\lmb+1)\|U\|_{\cB^{a,s}(\bR^{d+1})}
    \leq
    3\|F\|_{\cB^{a,s}(\bR^{d+1})}
    \leq
    C\|f\|_{\cB^{a,s}(T)}.
\end{equation}
Therefore, for any $\theta\in[0,1]$, we apply Proposition~\ref{2607201245} and Young's inequality to get
\begin{align}\label{eq:finite-mr-U_11}
\begin{aligned}
    \lambda^{1-\theta}\|U\|_{\cB^{\theta+a,\theta\gmm+s}(\bR^{d+1})}\,&\leq 
    \|U\|_{\cB^{1+a,\gmm+s}(\bR^{d+1})}^{\theta}
    \paren{\lambda\|U\|_{\cB^{a,s}(\bR^{d+1})}}^{1-\theta}
    \\
    &\lesssim_\tht C\|f\|_{\cB^{a,s}(T)}.
\end{aligned}
\end{align}
Define
\(u:=\chi_+ U\big|_{[0,T]\times\bR^d}.\)
Since \(\chi_+=\mathrm{e}^t\) and \(F=\mathrm{e}^{-t}f\) on \((0,T)\times\bR^d\), we have
\[
    \partial_t u-\Delta^{\gmm/2}u+\lmb u=f
    \quad\text{in }(0,T)\times\bR^d.
\]
Moreover, Lemma~\ref{lem:schwartz_multiplier}-(ii) and
\eqref{eq:finite-mr-U_11} imply
\begin{equation}\label{eq:finite-mr-u}
    \lambda^{1-\theta}\|u\|_{\cB^{\theta+a,\theta\gmm+s}(T)}
    \lesssim_\tht
    C\|f\|_{\cB^{a,s}(T)}.
\end{equation}

\noindent\textbf{Step 2. Homogeneous correction.}
By Remark~\ref{26.02.22.15.15},
\begin{equation}\label{eq:finite-mr-trace}
    \|u(0,\cdot)\|_{\cB^{\gmm+s}(\bR^d)}
    \leq
    C\|u\|_{\cB^{1+a,\gmm+s}(T)},
    \qquad
    \|u(0,\cdot)\|_{\cB^0(\bR^d)}
    \leq
    C\|u\|_{\cB^{0,0}(T)}.
\end{equation}
Set $w(t,\cdot):=
    \mathrm{e}^{t(\Delta^{\gmm/2}-\lmb)}u(0,\cdot)$.
Then
\[
    \partial_t w-\Delta^{\gmm/2}w+\lmb w=0,
    \qquad
    w(0,\cdot)=u(0,\cdot).
\]
By Theorem~\ref{thm:homogeneous_solution}, for any $\delta\in[0,1]$,
$$
\lambda^{(1-\delta)(1+a)}\|w\|_{\cB^{\delta (1+a),\delta (\gamma+s)}}\leq  C\left(
    \|u(0,\cdot)\|_{\cB^{\gmm+s}(\bR^d)}
    +
    \lmb^{1+a}\|u(0,\cdot)\|_{\cB^0(\bR^d)}
    \right).
$$
For $\theta\in[0,1]$, we put $\delta=\frac{\theta+a}{1+a}$ to get 
\begin{equation}\label{eq:finite-mr-w-raw}
    \lmb^{1-\theta}\|w\|_{\cB^{\theta+a,\theta\gamma+s}(T)}
    \leq
    C\left(
    \|u(0,\cdot)\|_{\cB^{\gmm+s}(\bR^d)}
    +
    \lmb^{1+a}\|u(0,\cdot)\|_{\cB^0(\bR^d)}
    \right).
\end{equation}
Using \eqref{eq:finite-mr-trace} and
\eqref{eq:finite-mr-u}, we obtain
\begin{align}\label{260713348}
    \|u(0,\cdot)\|_{\cB^{\gamma+s}(\bR^d)}
    \leq
    C\|u\|_{\cB^{1+a,\gamma+s}(T)}\leq C \|f\|_{\cB^{a,s}(T)}
\end{align}
and
\begin{align}\label{260713349}
\begin{aligned}
    \lmb^{1+a}\|u(0,\cdot)\|_{\cB^0(\bR^d)}
    &\leq
    C\lmb^{1+a}\|u\|_{\cB^{0,0}(T)}                                      \\
    &\leq
    C\lmb^{1+a}\|u\|_{\cB^{a,s}(T)}\leq
    C\lmb^a\|f\|_{\cB^{a,s}(T)}.
\end{aligned}
\end{align}
Combining \eqref{eq:finite-mr-w-raw}--\eqref{260713349}, we get
\begin{equation}\label{eq:finite-mr-w}
    \lmb^{1-\theta}\|w\|_{\cB^{\theta+a,\theta\gamma+s}(T)}
    \leq
    C\bigl(1+\lmb^a\bigr)
    \|f\|_{\cB^{a,s}(T)}.
\end{equation}

\medskip

\noindent\textbf{Step 3. Existence.}
Define \(v:=u-w\). Then
\[
    \partial_t v-\Delta^{\gmm/2}v+\lmb v=f
    \quad\text{in }(0,T)\times\bR^d,
    \qquad
    v(0,\cdot)=0.
\]
The estimate \eqref{2607201144} follows from \eqref{eq:finite-mr-u} and
\eqref{eq:finite-mr-w}. The estimate \eqref{eq260429_4} directly follows from \eqref{2607201144} by the interpolating relation in Proposition~\ref{2607201245}.
\medskip

\noindent\textbf{Step 4. Uniqueness.}
Let \(v_1,v_2\in\cB^{1+s/\gmm,\gmm+s}(T)\) be two solutions, and set
\(q:=v_1-v_2\). Then
\[
    \partial_t q-\Delta^{\gmm/2}q+\lmb q=0,
    \qquad
    q(0,\cdot)=0.
\]
By Proposition~\ref{lem:self-embedding}, \(q\in\cB^{1,\gmm}(T)\). Hence
Lemma~\ref{lem:uniqueness_homogeneous_heat} implies \(q=0\) on
\([0,T]\times\bR^d\). This proves uniqueness.
\end{proof}
\subsection{Inhomogeneous fractional heat equations}\label{subsec: inhomo. heat}

By conjugating the equation with an exponential temporal factor, we will now recover the undamped fractional heat equation without lower-order terms, whose regularity estimate follows from the preceding estimates with damping terms given in Theorem~\ref{thm:homogeneous_solution} and Theorem~\ref{thm:finite_mr_unshifted}.

\begin{thm}\label{thm: inhomo. heat}
Let \(T,\gmm\in(0,\infty)\) and \(s\in[0,\infty)\). 
For every
    $v_0\in\cB^{\gmm+s}(\bR^d)$ and 
    $f\in \cB^{s/\gmm,s}(T)$,
there exists a unique solution
\(v\in\cB^{1+s/\gmm,\gmm+s}(T)\) satisfying
\[
\begin{cases}
    \partial_t v-\Delta^{\gmm/2}v=f
        &\quad \text{in }(0,T)\times\bR^d,\\
    v(0,\cdot)=v_0
        &\quad \text{in }\bR^d.
\end{cases}
\]
Moreover, it holds that
\begin{equation}\label{260311510}
\|v\|_{\cB^{1+s/\gmm,\gmm+s}(T)}
\leq
C(T,s,\gamma)\left(
\|v_0\|_{\cB^{\gmm+s}(\mathbb R^d)}
+
\|f\|_{\cB^{s/\gmm,s}(T)}
\right).
\end{equation}
\end{thm}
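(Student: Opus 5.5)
The approach is to reduce to the damped problems through the exponential conjugation $v=\mathrm{e}^{t}\tilde v$, with damping parameter $\lmb=1$, and then to split $\tilde v$ into a homogeneous part carrying the initial data and an inhomogeneous part with zero initial data. Fix $\chi_\pm\in C_c^\infty(\bR)$ with $\chi_-(t)=\mathrm{e}^{-t}$ and $\chi_+(t)=\mathrm{e}^{t}$ on a neighborhood of $[0,T]$, exactly as in Step~1 of the proof of Theorem~\ref{thm:finite_mr_unshifted}. If $\tilde v$ solves
\[
\partial_t\tilde v-\Delta^{\gmm/2}\tilde v+\tilde v=\chi_- f,\qquad \tilde v(0,\cdot)=v_0,
\]
on $(0,T)\times\bR^d$, then $v:=\chi_+\tilde v$ satisfies $\partial_t v-\Delta^{\gmm/2}v=f$ there. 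Indeed, $\partial_t(\mathrm{e}^t\tilde v)=\mathrm{e}^t(\partial_t\tilde v+\tilde v)$ on $[0,T]$, and $v(0,\cdot)=v_0$ because $\chi_+(0)=1$.

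For existence, write $\tilde v=w+z$. Let $w(t,\cdot):=\mathrm{e}^{t(\Delta^{\gmm/2}-1)}v_0$. Theorem~\ref{thm:homogeneous_solution}, applied with $\lmb=1$ and $s$ replaced by $\gmm+s$, gives
\[
\|w\|_{\cB^{1+s/\gmm,\gmm+s}(T)}\le C\bigl(\|v_0\|_{\cB^{\gmm+s}}+\|v_0\|_{\cB^0}\bigr)\le C\|v_0\|_{\cB^{\gmm+s}},
\]
where the last step uses Proposition~\ref{lem:self-embedding}. Let $z$ be the zero-initial-data solution of Theorem~\ref{thm:finite_mr_unshifted} with $\lmb=1$ and source $\chi_-f$. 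By \eqref{eq260429_4} and Lemma~\ref{lem:schwartz_multiplier}(ii), taking $\alpha_1$ large so that $\alpha_1\wedge(s/\gmm)=s/\gmm$,
\[
\|z\|_{\cB^{1+s/\gmm,\gmm+s}(T)}\le C\|\chi_-f\|_{\cB^{s/\gmm,s}(T)}\le C\|f\|_{\cB^{s/\gmm,s}(T)}.
\]
One more application of Lemma~\ref{lem:schwartz_multiplier}(ii), now with $\chi_+$ and $\alpha_1\ge 1+s/\gmm$, gives $\|\chi_+\tilde v\|_{\cB^{1+s/\gmm,\gmm+s}(T)}\le C\|\tilde v\|_{\cB^{1+s/\gmm,\gmm+s}(T)}$. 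This yields \eqref{260311510}, with a constant depending on $T,s,\gmm$ through $\|\chi_\pm\|_{\cB^{1+s/\gmm}(\bR)}$.

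For uniqueness, take the difference $q$ of two solutions. It lies in $\cB^{1+s/\gmm,\gmm+s}(T)\subset\cB^{1,\gmm}(T)$ by Proposition~\ref{lem:self-embedding}, and it solves the undamped homogeneous problem with zero initial data. Lemma~\ref{lem:uniqueness_homogeneous_heat} explicitly allows $\lmb=0$, so $q=0$ directly.

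The only step needing care is checking that the conjugation is compatible with the notion of solution. The identity $\partial_t(\chi_+\tilde v)=\chi_+(\partial_t\tilde v+\tilde v)$ must hold on $(0,T)$, and $\Delta^{\gmm/2}$ must commute with multiplication by a function of $t$ alone. Both hold pointwise: by Proposition~\ref{prop:B-to-Ctx}, $\tilde v\in C^{1+s/\gmm,\gmm+s}_{t,x}$ is classically $C^1$ in $t$ (as $1+s/\gmm\ge 1$), and multiplication by a function of $t$ acts only in the temporal Fourier variable, so it commutes with the spatial multiplier $|\xi|^\gmm$. The norm bounds themselves are routine given the earlier lemmas.
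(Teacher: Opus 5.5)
Your proposal is correct and follows the paper's proof essentially step for step. The shared ingredients are: conjugation by $\chi_\pm$ with damping $\lambda=1$; the homogeneous part from Theorem~\ref{thm:homogeneous_solution} (with $s$ replaced by $\gamma+s$) together with Proposition~\ref{lem:self-embedding}; the zero-initial part from Theorem~\ref{thm:finite_mr_unshifted} with source $\chi_-f$; Lemma~\ref{lem:schwartz_multiplier}(ii) for the multiplier bounds; and uniqueness from Lemma~\ref{lem:uniqueness_homogeneous_heat} with $\lambda=0$. The differences are only cosmetic: you multiply the sum $w+z$ by $\chi_+$ once instead of each piece separately, and you use Proposition~\ref{prop:B-to-Ctx} to spell out why the conjugation identity holds pointwise, which the paper leaves implicit.
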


\begin{proof}
Throughout the proof, $C$ denotes a constant depending only on $T$, $\gamma$, and $s$.
Choose \(\chi_-,\chi_+\in C_c^\infty(\bR)\) such that
\[
    \chi_-(t)=\mathrm{e}^{-t},
    \qquad
    \chi_+(t)=\mathrm{e}^t
    \quad\text{on a neighborhood of }[0,T].
\]

\medskip

\noindent\textbf{Step 1. Zero-initial inhomogeneous part.}
By Lemma~\ref{lem:schwartz_multiplier}-(ii), $\|\chi_- f\|_{\cB^{s/\gmm,s}(T)}
    \leq
    C\|f\|_{\cB^{s/\gmm,s}(T)}$.
Applying Theorem~\ref{thm:finite_mr_unshifted} with \(\lmb=1\), we obtain
\(\tld u\in\cB^{1+s/\gmm,\gmm+s}(T)\) satisfying
\[
    \partial_t\tld u-\Delta^{\gmm/2}\tld u+\tld u=\chi_-f,
    \qquad
    \tld u(0,\cdot)=0,
\]
and
\begin{equation}\label{eq260521_3}
    \|\tld u\|_{\cB^{1+s/\gmm,\gmm+s}(T)}
    \leq
    C\|\chi_-f\|_{\cB^{s/\gmm,s}(T)}
    \leq
    C\|f\|_{\cB^{s/\gmm,s}(T)}.
\end{equation}
Set $u:=\chi_+\tld u$. Since \(\chi_+=\mathrm{e}^t\) and \(\chi_-=\mathrm{e}^{-t}\) on \((0,T)\), we have
\[
    \partial_tu-\Delta^{\gmm/2}u=f,
    \qquad
    u(0,\cdot)=0.
\]
Moreover, by Lemma~\ref{lem:schwartz_multiplier}-(ii) and
\eqref{eq260521_3},
\begin{equation}\label{eq260521_5}
    \|u\|_{\cB^{1+s/\gmm,\gmm+s}(T)}
    \leq
    C\|f\|_{\cB^{s/\gmm,s}(T)}.
\end{equation}

\medskip

\noindent\textbf{Step 2. Homogeneous part.}
Define $\tld w(t,\cdot):=\mathrm{e}^{t(\Delta^{\gmm/2}-1)}v_0$. By Theorem~\ref{thm:homogeneous_solution} with \(\lmb=1\), together with
\eqref{260510326} and Proposition~\ref{lem:self-embedding}, we get
\begin{equation}\label{eq260521_4}
    \|\tld w\|_{\cB^{1+s/\gmm,\gmm+s}(T)}
    \leq
    C
    \left(
    \|v_0\|_{\cB^{\gmm+s}(\bR^d)}
    +
    \|v_0\|_{\cB^0(\bR^d)}
    \right)
    \leq
    C\|v_0\|_{\cB^{\gmm+s}(\bR^d)}.
\end{equation}
Set $w:=\chi_+\tld w$.
Since \(\chi_+=\mathrm{e}^t\) on \((0,T)\), we have
\[
    \partial_tw-\Delta^{\gmm/2}w=0,
    \qquad
    w(0,\cdot)=v_0.
\]
Again, Lemma~\ref{lem:schwartz_multiplier}-(ii) and \eqref{eq260521_4}
give
\begin{equation}\label{eq260521_6}
    \|w\|_{\cB^{1+s/\gmm,\gmm+s}(T)}
    \leq
    C\|v_0\|_{\cB^{\gmm+s}(\bR^d)}.
\end{equation}

\medskip

\noindent\textbf{Step 3. Existence and estimate.}
Set $v:=u+w$. Then
\[
    \partial_t v-\Delta^{\gmm/2}v=f,
    \qquad
    v(0,\cdot)=v_0.
\]
The estimate \eqref{260311510} follows from \eqref{eq260521_5} and
\eqref{eq260521_6}.

\medskip

\noindent\textbf{Step 4. Uniqueness.}
Let \(v_1,v_2\in\cB^{1+s/\gmm,\gmm+s}(T)\) be two solutions, and set
\(q:=v_1-v_2\). Then
\[
    \partial_tq-\Delta^{\gmm/2}q=0,
    \qquad
    q(0,\cdot)=0.
\]
By Proposition~\ref{lem:self-embedding}, \(q\in\cB^{1,\gmm}(T)\). Hence
Lemma~\ref{lem:uniqueness_homogeneous_heat} with \(\lmb=0\) gives
\(q=0\) on \([0,T]\times\bR^d\). This completes the proof.
\end{proof}

\subsection{Proof of regularity estimate: Theorem~\ref{thm: intro PDE estimate}}
\label{subsec: proof of regularity estimate}

We use Theorem~\ref{thm: inhomo. heat} as the starting point for treating the drift and potential terms by the method of continuity. We first derive an a priori estimate that is uniform along the continuity path. The proof consists of a finite regularity bootstrap followed by a damped base estimate that absorbs the lower-order terms.

\begin{proof}[Proof of Theorem~\ref{thm: intro PDE estimate}]
We set up the method of continuity. Let
\[
    Y:=\cB^{\gmm+s}(\bR^d)\times \cB^{s/\gmm,s}(T),
    \qquad
    \|(v_0,f)\|_Y
    :=
    \|v_0\|_{\cB^{\gmm+s}(\bR^d)}
    +
    \|f\|_{\cB^{s/\gmm,s}(T)}.
\]
Define $Bu
    :=
    (\bfb\cdot\nabla u)\mathbf 1_{\{\gmm>1\}}+cu$.
For \(\tht\in[0,1]\), set 
$$
L_\tht: u\mapsto
    \left(
    u(0,\cdot),
    \partial_tu-\Delta^{\gmm/2}u-\tht Bu
    \right).
    $$
By Propositions~\ref{lem:self-embedding} and \ref{260516347}, $L_\tht:\cB^{1+s/\gmm,\gmm+s}(T)\to Y$ is a bounded linear operator. Moreover, \(L_0\) is bijective by
Theorem~\ref{thm: inhomo. heat}. Therefore, by the method of continuity, it
suffices to prove the following estimate uniformly in \(\tht\in[0,1]\): if
\(v\in\cB^{1+s/\gmm,\gmm+s}(T)\), \(v_0:=v(0,\cdot)\), and $
    f:=
    \partial_tv-\Delta^{\gmm/2}v-\tht Bv
    \in \cB^{s/\gmm,s}(T)$,
then
\begin{equation}
\label{26.05.31.15.56}
    \|v\|_{\cB^{1+s/\gmm,\gmm+s}(T)}
    \lesssim_{T,s,\gmm,\nrm{\bfb},\nrm{c}}
    \|v_0\|_{\cB^{\gmm+s}(\bR^d)}
    +
    \|f\|_{\cB^{s/\gmm,s}(T)}.
\end{equation}

We now prove \eqref{26.05.31.15.56}.

\medskip

\noindent\textbf{Step 1. Bootstrap argument}

If \(s=0\), we skip Step 1 and proceed directly to the base estimate in
Step 2. Suppose now that \(s>0\). We first put $\gmm':= \gmm-\bfone_{\set{\gmm>1}}\in(0,\gmm]$ and take $n\in\bN\cup\set{0}$ such that
$$
s-(n+1)\gamma'\leq 0< s-n\gamma'\,,
$$
and for $0\leq i\leq n+1$, put $s_i:=s-i\gamma'$.
For each $0\leq i\leq n$, applying Theorem~\ref{thm: inhomo. heat} for $s_i$ in place of $s$ gives 
\begin{align}
\begin{alignedat}{2}
&&&\nrm{v}_{\cB^{1+s_i/\gmm,\gmm+s_i}(T)}\\
&\lesssim_{T,s,\gmm}&&\,\nrm{v_0}_{\cB^{\gmm+s_i}(\bR^d)}+\nrm{f}_{\cB^{s_i/\gmm,s_i}(T)}+\nrm{\bfb\cdot\nabla v}_{\cB^{s_i/\gmm,s_i}(T)}\bfone_{\set{\gmm>1}}+\nrm{cv}_{\cB^{s_i/\gmm,s_i}(T)}\\
&\lesssim_{s,\gmm}&&\,\nrm{v_0}_{\cB^{\gmm+s}(\bR^d)}+\nrm{f}_{\cB^{s/\gmm,s}(T)}+\nrm{\bfb\cdot\nabla v}_{\cB^{s_i/\gmm,s_i}(T)}\bfone_{\set{\gmm>1}}+\nrm{cv}_{\cB^{s_i/\gmm,s_i}(T)}
\end{alignedat} 
\end{align} where, by Propositions~\ref{lem:self-embedding} and \ref{260516347}, we have
\begin{align}\label{eq260528_1}
\begin{alignedat}{2}
\nrm{\bfb\cdot\nabla v}_{\cB^{s_i/\gmm,s_i}(T)}\,&\lesssim_{s,\gmm}&&\,\paren{\sum_{k=1}^d\nrm{b_k}^2_{\cB^{s_i/\gmm,s_i}(T)}}^{1/2}\nrm{v}_{\cB^{(1+s_i)/\gmm,1+s_i}(T)}\\
&\leq&&\,\underbrace{\paren{\sum_{k=1}^d\nrm{b_k}^2_{\cB^{s/\gmm,s}(T)}}^{1/2}}_{:=\nrm{\bfb}}\nrm{v}_{\cB^{(1+s_i)/\gmm,1+s_i}(T)}
\end{alignedat}
\end{align} 
and
\begin{align}\label{eq260528_2}
\begin{alignedat}{2}
\nrm{cv}_{\cB^{s_i/\gmm,s_i}(T)}\,&\lesssim_{s,\gmm}&&\,\nrm{c}_{\cB^{s_i/\gmm,s_i}(T)}\nrm{v}_{\cB^{s_i/\gmm,s_i}(T)}\\
&\leq &&\,\underbrace{\nrm{c}_{\cB^{s/\gmm,s}(T)}}_{:=\nrm{c}}\nrm{v}_{\cB^{s_i/\gmm,s_i}(T)}.
\end{alignedat}
\end{align} 
In sum, we obtain
$$\begin{aligned}
&\nrm{v}_{\cB^{1+s_i/\gmm,\gmm+s_i}(T)}\\
\lesssim_{T,s,\gmm,\nrm{\bfb},\nrm{c}}\,&\nrm{v_0}_{\cB^{\gmm+s}(\bR^d)}+\nrm{f}_{\cB^{s/\gmm,s}(T)}+
\nrm{v}_{\cB^{(1+s_i)/\gmm,1+s_i}(T)}\bfone_{\set{\gmm>1}}
+\nrm{v}_{\cB^{s_i/\gmm,s_i}(T)},
\end{aligned}
$$ of which the last two terms on the right-hand side are bounded as
\begin{equation}\label{eq260528_3}
\begin{aligned}
\nrm{v}_{\cB^{(1+s_i)/\gmm,1+s_i}(T)}\bfone_{\set{\gmm>1}}+\nrm{v}_{\cB^{s_i/\gmm,s_i}(T)}\,&\lesssim\, \begin{cases}
\nrm{v}_{\cB^{(1+s_i)/\gmm,1+s_i}(T)}&(\gmm>1)\\
\nrm{v}_{\cB^{s_i/\gmm,s_i}(T)}&(\gmm\leq1)
\end{cases}\\
&=\,\nrm{v}_{\cB^{1+s_{i+1}/\gmm,\gmm+s_{i+1}}(T)}
\end{aligned}\end{equation} 
also by embedding.
We put $V_i:=\nrm{v}_{\cB^{1+s_i/\gmm,\gmm+s_i}(T)}$ and rewrite as 
$$V_i\q\lesssim_{T,s,\gmm,\nrm{\bfb},\nrm{c}}\q\nrm{v_0}_{\cB^{\gmm+s}(\bR^d)}+\nrm{f}_{\cB^{s/\gmm,s}(T)}+V_{i+1},\qd\text{for each }\q i\in\set{0,\ldots,n}.$$ 
Iterating this inequality from \(i=0\) to \(i=n\), we obtain
\begin{align}\label{eq260528_7}
\begin{aligned}
&\nrm{v}_{\cB^{1+s/\gmm,\gmm+s}(T)}
\q\\
\lesssim_{T,s,\gmm,\nrm{\bfb},\nrm{c}}\qd&
\nrm{v_0}_{\cB^{\gmm+s}(\bR^d)}+\nrm{f}_{\cB^{s/\gmm,s}(T)}+\nrm{v}_{\cB^{1+s_{n+1}/\gmm,\gmm+s_{n+1}}(T)}.
\end{aligned}
\end{align}
Now it remains to estimate $\nrm{v}_{\cB^{1+s_{n+1}/\gmm,\gmm+s_{n+1}}(T)}$.
Observe that because of $s_{n+1}\leq0$, we have
\begin{align}\label{260529735}
\nrm{v}_{\cB^{1+s_{n+1}/\gmm,\gmm+s_{n+1}}(T)}\lesssim \nrm{v}_{\cB^{1,\gmm}(T)}\,.
\end{align}
Therefore it remains to estimate $\nrm{v}_{\cB^{1,\gmm}(T)}$.

\medskip

\noindent\textbf{Step 2. Base estimate} 

Let $\lmb>0$ and put 
$z(t):=\mathrm{e}^{-\lambda t}(v(t)-v_0)$ to get
$$
\begin{cases}
\p_t z-\Delta^{\gamma/2}z+\lambda z= \tld f_1+\tld f_2
& \qd\text{in }(0,T)\times \mathbb{R}^d,
\\
z(0,\cdot)=0
& \qd\text{in }  \mathbb{R}^d,
\end{cases}
$$
where $\tld f_1:=\tht\paren{(\bfb\cdot\nabla z)\bfone_{\set{\gmm>1}}+cz}$ and 
$$
\tld f_2:=
\mathrm{e}^{-\lmb t}\paren{f+\Delta^{\gmm/2}v_0+\tht\paren{(\bfb\cdot\nabla v_0)\bfone_{\set{\gmm>1}}+cv_0}}.$$
We now apply the $s=0$ (and $\theta=1/\gamma$ when $\gamma>1$) version of Theorem~\ref{thm:finite_mr_unshifted} to $z$ such that 
\begin{align}\label{eq260528_4}
\begin{aligned}
&\|z\|_{\cB^{1,\gmm}(T)}+\lambda \|z\|_{\cB^{0,0}(T)}+\lambda^{1-1/\gamma}\|z\|_{\cB^{1/\gamma,1}(T)}\bfone_{\{\gamma>1\}}\\
&\qquad \lesssim_{T,s,\gmm}\,\|\tld f_1\|_{\cB^{0,0}(T)}+\|\tld f_2\|_{\cB^{0,0}(T)}\,.
\end{aligned}
\end{align}
In the same way we used Proposition~\ref{260516347} in \eqref{eq260528_1} and \eqref{eq260528_2}, we have 
\begin{align}\label{eq260529_2}
\begin{alignedat}{2}
\|\tld f_1\|_{\cB^{0,0}(T)}\,&\lesssim&&\, \|\bfb\cdot \nabla z\|_{\cB^{0,0}(T)}\bfone_{\{\gamma>1\}}+\|cz\|_{\cB^{0,0}(T)}\\
&\lesssim_{s,\gmm,\nrm{\bfb},\nrm{c}}&&\,\nrm{z}_{\cB^{1/\gamma,1}(T)}\bfone_{\{\gamma>1\}}
+\nrm{z}_{\cB^{0,0}(T)}.
\end{alignedat} 
\end{align} 
Hence, we can fix large $\lmb$ so that, in \eqref{eq260528_4}, the term $\|\tld f_1\|_{\cB^{0,0}(T)}$ on the right-hand side is absorbed by the left. Then we write 
\begin{equation}\label{eq260528_5}
\|z\|_{\cB^{1,\gamma}(T)}\,\lesssim_{T,s,\gmm,\nrm{\bfb},\nrm{c}}\,\|\tld f_2\|_{\cB^{0,0}(T)}.
\end{equation}

With \eqref{eq260528_5} in hand, we will estimate $\nrm{v}_{\cB^{1,\gamma}(T)}$. Noting that $v(t)=\mathrm{e}^{\lmb t}z(t)+v_0$ and applying Lemma~\ref{lem:schwartz_multiplier}, we have 
\begin{align}\label{eq260528_6}
\begin{alignedat}{2}
\nrm{v}_{\cB^{1,\gamma}(T)}\,&\lesssim_{T,s,\gmm}&&\,\nrm{z}_{\cB^{1,\gamma}(T)}+\nrm{v_0}_{\cB^{\gmm}(\bR^d)}\\
&\lesssim_{T,s,\gmm,\nrm{\bfb},\nrm{c}}&&\,
\|\tld f_2\|_{\cB^{0,0}(T)}+\nrm{v_0}_{\cB^{\gmm}(\bR^d)}.
\end{alignedat}
\end{align} Using Lemma~\ref{lem:schwartz_multiplier} again, the term $\|\tld f_2\|_{\cB^{0,0}(T)}$ splits as 
\begin{align}
\begin{alignedat}{2}
&&&\|\tld f_2\|_{\cB^{0,0}(T)}\\
&\lesssim_T&&\,\nrm{f}_{\cB^{0,0}(T)}+
\nrm{\Delta^{\gmm/2}v_0}_{\cB^{0}(\bR^d)}+
\nrm{\bfb\cdot\nabla v_0}_{\cB^{0,0}(T)}\bfone_{\set{\gmm>1}}+\nrm{cv_0}_{\cB^{0,0}(T)}\\
&\lesssim_{T,s,\gmm,\nrm{\bfb},\nrm{c}}&&\,
\nrm{f}_{\cB^{0,0}(T)}+
\nrm{\Delta^{\gmm/2}v_0}_{\cB^{0}(\bR^d)}+\nrm{v_0}_{\cB^{1}(\bR^d)}\bfone_{\set{\gmm>1}}+\nrm{v_0}_{\cB^0(\bR^d)}\\
&\lesssim&&\,\nrm{f}_{\cB^{s/\gmm,s}(T)}+\nrm{v_0}_{\cB^{\gmm}(\bR^d)},
\end{alignedat}
\end{align} where 
$$
\begin{aligned}
\nrm{\Delta^{\gmm/2}v_0}_{\cB^{0}(\bR^d)}&=\int_{\bR^d}|\xi|^\gmm|\cF[v_0](\xi)|d\xi\leq\,
\int_{\bR^d}\paren{1+|\xi|^{\gmm}}|\cF[v_0](\xi)|d\xi=\nrm{v_0}_{\cB^{\gmm}(\bR^d)}.
\end{aligned}
$$ 
It means that \eqref{eq260528_6} is written as
$$\nrm{v}_{\cB^{1,\gamma}(T)}\,\lesssim_{T,s,\gmm,\nrm{\bfb},\nrm{c}}\,
\nrm{v_0}_{\cB^{\gmm}(\bR^d)}+\nrm{f}_{\cB^{0,0}(T)},
$$
which applies to \eqref{eq260528_7} and \eqref{260529735} as 
$$\nrm{v}_{\cB^{1+s/\gmm,\gmm+s}(T)}
\q\lesssim_{T,s,\gmm,\nrm{\bfb},\nrm{c}}\q
\nrm{v_0}_{\cB^{\gmm+s}(\bR^d)}+\nrm{f}_{\cB^{s/\gmm,s}(T)}.$$ This completes the proof of the desired a priori estimate.
\end{proof}

\subsection{Failure of the regularity estimate in mixed Lebesgue--Barron spaces}
\label{26.06.07.14.31}
We will observe that replacing the anisotropic Barron norm $\cB^{\alp,\bt}(T)$ by the natural uniform-in-time Barron norm $L^\ift((0,T);\cB^\bt(\bR^d))$ does not lead to an analogous regularity estimate.

The obstruction is produced by a source term whose increasingly high-frequency components are activated on pairwise disjoint and increasingly short time intervals. Consequently, its uniform-in-time spatial Barron norm remains bounded, while the contributions of all frequency components accumulate in the solution at a fixed observation time, preventing the solution norm from being controlled by the corresponding norm of the source term.

It suffices to consider $s=0$ and $\gamma=2$. 
Indeed, the reduction to $s=0$  follows from the isomorphism property of the Bessel potential operator  $(1-\Delta)^{s/2}$ on spectral Barron spaces, while the same argument extends to general $\gamma>0$ by replacing the time scale $r^{-2}$ with $r^{-\gamma}$.

We show that the regularity estimate
\begin{equation}
\label{eq:false-endpoint-barron}
    \|u\|_{L^\infty(0,T;\mathcal B^2(\mathbb R^d))}
    \le C(T)
    \|f\|_{L^\infty(0,T;\mathcal B^0(\mathbb R^d))}
\end{equation}
fails in general for the mild solution
\begin{align}\label{260713405}
    u(t,x)
    :=
    \int_0^t \mathrm{e}^{(t-s)\Delta}f(s,\cdot)(x)\dd s .
\end{align}
of
\[
    \partial_t u=\Delta u+f,
    \qquad
    u(0,\cdot)=0 .
\]
It is enough to construct a counterexample in dimension \(d=1\).

Fix $T>0$ and choose a time $t_*\in(0,T)$.
Let $M\in\bN$ be arbitrary.
Choose $R>0$ sufficiently large so that $t_*-\frac{2}{R^2}>0$, and define $r_m:=4^{m-1}R$ for $m=1,\dots,M$.
For each $m$, choose a nonnegative function $\psi_m\in C_c^\infty(\mathbb R)$ such that
$$
    \operatorname{supp}\psi_m\subset [r_m,2r_m],
    \qquad
    \int_{\mathbb R}\psi_m(\xi)\,\dd\xi=1.
$$
The supports of $\psi_m$ are pairwise disjoint. In particular,
$$
    \int_{\mathbb R}|\psi_m(\xi)|\,\mathrm{d}\xi=1,
    \qquad
    \int_{\mathbb R}\xi^2\psi_m(\xi)\,\mathrm{d}\xi
    \ge r_m^2 .
$$
Next, define the time intervals $I_m:=
    \left(
        t_*-\frac{2}{r_m^2},
        t_*-\frac{1}{r_m^2}
    \right)$.
Since $r_{m+1}=4r_m$, these intervals are pairwise disjoint due to $t_*-\frac{1}{r_m^2}
    <
    t_*-\frac{2}{r_{m+1}^2}$.
Choose functions $\eta_m\in C_c^\infty(I_m)$ such that
$$
    0\le \eta_m\le 1,
    \qquad
    \int_{I_m}\eta_m(s)\,\mathrm{d}s
    \ge
    \frac{c_0}{r_m^2},
$$
where $c_0>0$ is independent of $m$.

Define $f$ by prescribing its spatial Fourier transform:
\begin{align}\label{260713408}
    \cF_{d}[f(s,\cdot)](\xi)
    :=
    \sum_{m=1}^M \eta_m(s)\psi_m(\xi).
\end{align}
Since the time supports of $\eta_m$ are pairwise disjoint, at each fixed time $s$, at most one term in the above sum is nonzero. 
Therefore
$$
    \|f(s,\cdot)\|_{\mathcal B^0(\mathbb R)}
    \le C
    \int_{\mathbb R}|\cF_{d}[f(s,\cdot)](\xi)|\,\mathrm{d}\xi
    \leq C
$$
for every $s\in[0,T]$, where $C$ does not depend on $s$ and $M$. Hence $    \|f\|_{L^\infty((0,T);\mathcal B^0(\mathbb R))}
    \le C$.

For $u$ in \eqref{260713405}, by \eqref{260713408}, we obtain
$$
   \cF_d[u(t_*,\cdot)](\xi)
    =
    \sum_{m=1}^M
    \psi_m(\xi)
    \int_{I_m}
    \mathrm{e}^{-(t_*-s)|\xi|^2}\eta_m(s)\,\dd s .
$$
Fix $m\in\{1,\dots,M\}$.
If $\xi\in\operatorname{supp}\psi_m\subset [r_m,2r_m]$ and $s\in I_m$, then $\frac{1}{r_m^2}
    \le
    t_*-s
    \le
    \frac{2}{r_m^2}$.
Hence $(t_*-s)|\xi|^2
    \le
    \frac{2}{r_m^2}(2r_m)^2
    =
    8$.
Therefore, $\mathrm{e}^{-(t_*-s)|\xi|^2}
    \ge \mathrm{e}^{-8}$.
Consequently,
$$
    \int_{I_m}
    \mathrm{e}^{-(t_*-s)|\xi|^2}\eta_m(s)\,\mathrm{d}s
    \ge
    \mathrm{e}^{-8}\int_{I_m}\eta_m(s)\,\mathrm{d}s
    \ge
    \frac{c_0\mathrm{e}^{-8}}{r_m^2}.
$$

Since the functions $\psi_m$ are nonnegative and have disjoint supports, we get
\begin{align*}
    \|u(t_*,\cdot)\|_{\mathcal B^2(\mathbb R)}
    &\ge
    \int_{\mathbb R}\xi^2|\cF_d[u(t_*,\cdot)](\xi)|\,\mathrm{d}\xi
    \\
    &=
    \sum_{m=1}^M
    \int_{\operatorname{supp}\psi_m}
    \xi^2\psi_m(\xi)
    \left[
        \int_{I_m}
        \mathrm{e}^{-(t_*-s)|\xi|^2}\eta_m(s)\,\mathrm{d}s
    \right]
    d\xi
    \\
    &\ge
    \sum_{m=1}^M
    \frac{c_0\mathrm{e}^{-8}}{r_m^2}
    \int_{\operatorname{supp}\psi_m}
    \xi^2\psi_m(\xi)\,\mathrm{d}\xi
    \\
    &\ge
    \sum_{m=1}^M
    \frac{c_0\mathrm{e}^{-8}}{r_m^2}
    r_m^2
    \int_{\mathbb R}\psi_m(\xi)\,\mathrm{d}\xi
    =\,c_0\mathrm{e}^{-8}M .
\end{align*}
Since the constructed solution is smooth in time with values in $\mathcal B^2(\mathbb R)$, the same lower bound implies $\|u\|_{L^\infty((0,T);\mathcal B^2(\mathbb R))}
    \ge
    cM$
with a constant $c>0$ independent of $M$.

Since $M\in\mathbb N$ was arbitrary, no constant $C(T)$ can make \eqref{eq:false-endpoint-barron} valid. 
Therefore, the maximal regularity estimate
$
    L^\infty((0,T);\mathcal B^0(\mathbb R))
    \rightarrow
    L^\infty((0,T);\mathcal B^2(\mathbb R))
$
fails in general for the inhomogeneous evolution equation.

\section{Neural network approximation of functions in anisotropic spectral Barron spaces}\label{sec: approximation}
This section is devoted to demonstrating that a neural network can approximate a target function in the anisotropic Barron space without the curse of dimensionality. 
Since a target function has an anisotropic regularity with respect to the temporal and spatial variables, the approximation error is measured with respect to the mixed Sobolev norms. 
As in the anisotropic Barron spaces, we define the Sobolev norm as a weighted $L^2$ norm over the temporal and spatial frequencies. 

\begin{defn}[Hilbert space-valued Sobolev spaces]
\label{def:Hilbert-valued-Sobolev}
Let $\mathcal H$ be a Hilbert space, $N\geq1$, and $s\geq0$.
The scalar Fourier transform in Definition~\ref{def:fourier}
extends naturally to a Fourier transform $\mathcal F_N:
L^2(\mathbb R^N;\mathcal H)
\longrightarrow
L^2(\mathbb R^N;\mathcal H)$.
We define
\[
H^s(\mathbb R^N;\mathcal H)
:=
\left\{
V\in L^2(\mathbb R^N;\mathcal H)
\;\middle|\;
(1+|\zeta|^2)^{s/2}\mathcal F_N[V](\zeta)
\in L^2(\mathbb R^N;\mathcal H)
\right\},
\]
equipped with the norm
\[
\|V\|_{H^s(\mathbb R^N;\mathcal H)}^2
:=
\int_{\mathbb R^N}
(1+|\zeta|^2)^s
\|\mathcal F_N[V](\zeta)\|_{\mathcal H}^2
\,\mathrm{d}\zeta.
\]

If $\mathcal H$ is separable and $(e_j)_{j\geq1}$ is an
orthonormal basis of $\mathcal H$, then
\[
\mathcal F_N[V](\zeta)
=
\sum_{j=1}^{\infty}
\mathcal F_N
\bigl[\langle V,e_j\rangle_{\mathcal H}\bigr](\zeta)e_j
\]
in $L^2(\mathbb R^N;\mathcal H)$.

Let $D\subset\mathbb R^N$ be a domain. We define
$H^s(D;\mathcal H)$ as the restriction space
\[
H^s(D;\mathcal H)
:=
\left\{
v\in L^2(D;\mathcal H)
\;\middle|\;
\begin{array}{c}
\text{there exists }V\in H^s(\mathbb R^N;\mathcal H)\\
\text{such that }V|_D=v
\end{array}
\right\},
\]
equipped with the norm
\[
\|v\|_{H^s(D;\mathcal H)}
:=
\inf
\left\{
\|V\|_{H^s(\mathbb R^N;\mathcal H)}
\;\middle|\;
V\in H^s(\mathbb R^N;\mathcal H)
\text{ and }V|_D=v
\right\}.
\]
Here the restriction $V|_D=v$ is understood in
$L^2(D;\mathcal H)$.
\end{defn}

\begin{example}[Anisotropic Sobolev spaces]
\label{ex:anisotropic-Sobolev}
Let $d\geq1$ and $\alpha,\beta\geq0$.
Taking $N=1$, $s=\alpha$, and
$\mathcal H=H^\beta(\mathbb R^d)$ in
Definition~\ref{def:Hilbert-valued-Sobolev}, we obtain
$H^\alpha(\mathbb R;H^\beta(\mathbb R^d))$, whose norm satisfies
\[
\|u\|_{H^\alpha(\mathbb R;H^\beta(\mathbb R^d))}^2
=
\int_{\mathbb R}\int_{\mathbb R^d}
(1+|\tau|^2)^\alpha
(1+|\xi|^2)^\beta
|\mathcal F_{d+1}[u](\tau,\xi)|^2
\,\mathrm{d}\xi\,\mathrm{d}\tau.
\]
For an interval $I\subset\mathbb R$ and a domain $\Omega\subset\mathbb R^d$, we use the notation $H^\alpha(I;H^\beta(\Omega))$ in the sense of Definition~\ref{def:Hilbert-valued-Sobolev} with $N=1$, $D=I$, and $\mathcal H=H^\beta(\Omega)$.
\end{example}

The Fourier-based definition is meaningful for arbitrary real orders $s\geq0$. 
In the estimates below, however, integer-order Sobolev norms are controlled through weak derivatives. 
We therefore use the following interpolation inequality to pass to fractional orders.

\begin{lem}[Hilbert-valued Sobolev interpolation inequality]
\label{lem:hilbert-valued-interpolation-explicit}
Let $\mathcal H$ be a Hilbert space and let $N\geq1$. 
Given $s\geq0$, set $m:=\lfloor s\rfloor$ and $\theta:=s-m\in[0,1)$.
Then the interpolation inequality
$$
    \|V\|_{H^s(\mathbb R^N;\mathcal H)}
    \leq
    \|V\|_{H^m(\mathbb R^N;\mathcal H)}^{1-\theta}
    \|V\|_{H^{m+1}(\mathbb R^N;\mathcal H)}^\theta .
$$
holds for every $V\in H^{m+1}(\mathbb R^N;\mathcal H)$.
\end{lem}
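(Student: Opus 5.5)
The plan is to work entirely on the Fourier side, using the definition of $H^s(\mathbb R^N;\mathcal H)$ as a weighted $L^2$ space. Writing $g(\zeta):=\|\mathcal F_N[V](\zeta)\|_{\mathcal H}^2\geq0$, which is integrable against $(1+|\zeta|^2)^{m+1}$ because $V\in H^{m+1}(\mathbb R^N;\mathcal H)$, we have
\[
\|V\|_{H^s(\mathbb R^N;\mathcal H)}^2=\int_{\mathbb R^N}(1+|\zeta|^2)^{s}g(\zeta)\,\mathrm d\zeta .
\]
The key step is the pointwise factorization of the weight. Since $s=(1-\theta)m+\theta(m+1)$, we have
\[
(1+|\zeta|^2)^{s}=\bigl((1+|\zeta|^2)^{m}\bigr)^{1-\theta}\bigl((1+|\zeta|^2)^{m+1}\bigr)^{\theta},
\]
and correspondingly $g=g^{1-\theta}g^{\theta}$. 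Applying H\"older's inequality with exponents $1/(1-\theta)$ and $1/\theta$ to the product of $\bigl((1+|\zeta|^2)^m g\bigr)^{1-\theta}$ and $\bigl((1+|\zeta|^2)^{m+1}g\bigr)^{\theta}$ then gives
\[
\|V\|_{H^s}^2\leq\Bigl(\int(1+|\zeta|^2)^m g\Bigr)^{1-\theta}\Bigl(\int(1+|\zeta|^2)^{m+1}g\Bigr)^{\theta}=\|V\|_{H^m}^{2(1-\theta)}\|V\|_{H^{m+1}}^{2\theta}.
\]
Taking square roots yields the claimed inequality. This is the same mechanism as in Proposition~\ref{2607201245}, except that here the weights are exactly log-linear, so no constant appears.

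Two routine points remain. The case $\theta=0$ is trivial, since then $s=m$ and the right-hand side is $\|V\|_{H^m}$; the H\"older argument is only needed for $\theta\in(0,1)$. The finiteness of $\|V\|_{H^m}$ follows from $(1+|\zeta|^2)^m\leq(1+|\zeta|^2)^{m+1}$.

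The only slight subtlety, and hence the main obstacle, is checking that H\"older's inequality applies to the $\mathcal H$-valued setting. This is immediate, because the argument only involves the scalar, measurable, nonnegative function $\zeta\mapsto\|\mathcal F_N[V](\zeta)\|_{\mathcal H}^2$. Its measurability comes from the fact that $\mathcal F_N[V]\in L^2(\mathbb R^N;\mathcal H)$ is strongly measurable, as provided by Definition~\ref{def:Hilbert-valued-Sobolev}. No separability is needed for this step.
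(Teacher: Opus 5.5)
Your proposal is correct and follows essentially the same argument as the paper. The paper also disposes of $\theta=0$ directly. For $\theta\in(0,1)$ it factors the integrand $(1+|\zeta|^2)^{m+\theta}\|\mathcal F_N[V](\zeta)\|_{\mathcal H}^2$ into the $(1-\theta)$- and $\theta$-powers of the $H^m$ and $H^{m+1}$ integrands, applies H\"older's inequality with exponents $1/(1-\theta)$ and $1/\theta$, and takes square roots.
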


\begin{proof}
If $\theta=0$, the assertion is immediate. 
Suppose $\theta\in(0,1)$.
Since $s=m+\theta$, we have
$$
\begin{aligned}
    \|V\|_{H^s(\mathbb R^N;\mathcal H)}^2
    &=
    \int_{\mathbb R^N}
    (1+|\zeta|^2)^{m+\theta}
    \|\mathcal F_N V(\zeta)\|_{\mathcal H}^2
    \,\mathrm d\zeta                                                    \\
    &=
    \int_{\mathbb R^N}
    \left[
        (1+|\zeta|^2)^m
        \|\mathcal F_N V(\zeta)\|_{\mathcal H}^2
    \right]^{1-\theta}
    \left[
        (1+|\zeta|^2)^{m+1}
        \|\mathcal F_N V(\zeta)\|_{\mathcal H}^2
    \right]^\theta
    \,\mathrm d\zeta .
\end{aligned}
$$
By H\"older's inequality,
$$
    \|V\|_{H^s(\mathbb R^N;\mathcal H)}^2
    \leq
    \|V\|_{H^m(\mathbb R^N;\mathcal H)}^{2(1-\theta)}
    \|V\|_{H^{m+1}(\mathbb R^N;\mathcal H)}^{2\theta}.
$$
Taking square roots gives the result.
\end{proof} 

Since the approximation error is measured on a bounded domain, we also need the corresponding interpolation inequality on bounded Lipschitz domains. 
This follows from the existence of a bounded extension operator.

\begin{corollary}[Interpolation on a bounded Lipschitz domain]
\label{cor:lipschitz-domain-sobolev-interpolation}
Let $\Omega\subset\mathbb R^d$ be a bounded Lipschitz domain, and let
$\mathcal H$ be a Hilbert space. Let $m\in\mathbb N_0$, $\theta\in(0,1)$,
and set $s:=m+\theta$. Then there exists a constant
$C=C(\Omega,m,\theta)>0$ such that
\[
    \|v\|_{H^{s}(\Omega;\mathcal H)}
    \leq
    C
    \|v\|_{H^m(\Omega;\mathcal H)}^{1-\theta}
    \|v\|_{H^{m+1}(\Omega;\mathcal H)}^\theta
\]
for all $v\in H^{m+1}(\Omega;\mathcal H)$.
\end{corollary}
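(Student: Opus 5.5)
The plan is to transfer the whole-space inequality of Lemma~\ref{lem:hilbert-valued-interpolation-explicit} to $\Omega$ with a \emph{single} extension operator that is bounded simultaneously on $H^m$ and $H^{m+1}$. The issue is that the norms on $\Omega$ in Definition~\ref{def:Hilbert-valued-Sobolev} are restriction (infimum) norms. Near-optimal extensions for $H^m$ and $H^{m+1}$ may differ, so one cannot simply restrict the whole-space inequality. Instead, I will use a universal extension operator $E$ for bounded Lipschitz domains, namely Stein's extension operator. It is linear and satisfies $\|Eu\|_{H^k(\mathbb R^d)}\le C(\Omega,k)\|u\|_{H^k(\Omega)}$ for every $k\in\mathbb N_0$ at once, where the $H^k(\Omega)$ norm is the intrinsic one defined through weak derivatives.

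First, I lift $E$ to Hilbert-valued functions. Stein's operator acts as an integral/averaging operator in the variable $x$: it is built from a partition of unity, local Lipschitz graph charts, and a weighted integral of reflections. Hence it extends to $\mathcal H$-valued functions with the same bounds. One way to see this is to apply the scalar operator to $\langle v,e_j\rangle_{\mathcal H}$ for an orthonormal basis $(e_j)$ and sum in $j$, using Parseval. Since integer-order norms are square sums of derivative norms, the scalar bounds pass to the $\mathcal H$-valued case with the same constants. If $\mathcal H$ is non-separable, I reduce to the separable closed subspace spanned by the (essential) range of $v$.

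Second, I check that, at integer orders $k$, the restriction norm of Definition~\ref{def:Hilbert-valued-Sobolev} is equivalent to the intrinsic norm $\sum_{|\mu|\le k}\|\partial^\mu v\|_{L^2(\Omega;\mathcal H)}$. One direction holds because restriction does not increase derivative norms, and on $\mathbb R^d$ the Fourier weight $(1+|\zeta|^2)^k$ is comparable to $\sum_{|\mu|\le k}\zeta^{2\mu}$ with constants depending only on $k$ and $d$. The other direction is exactly the boundedness of $E$. So for $k=m,m+1$,
\[
\|Ev\|_{H^k(\mathbb R^d;\mathcal H)}\le C\|v\|_{H^k(\Omega;\mathcal H)} .
\]

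Finally, for $v\in H^{m+1}(\Omega;\mathcal H)$, the function $Ev$ is one admissible extension. Using the definition of the restriction norm and then Lemma~\ref{lem:hilbert-valued-interpolation-explicit}, I get
\[
\|v\|_{H^s(\Omega;\mathcal H)}\le\|Ev\|_{H^s(\mathbb R^d;\mathcal H)}\le \|Ev\|_{H^m}^{1-\theta}\|Ev\|_{H^{m+1}}^{\theta}\le C\|v\|_{H^m(\Omega;\mathcal H)}^{1-\theta}\|v\|_{H^{m+1}(\Omega;\mathcal H)}^{\theta}.
\]
The main obstacle is the first step: one has to produce an extension operator that works for all orders simultaneously and commutes with the Hilbert-space structure. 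Order-by-order reflection extensions would make $E$ depend on $m$; that is harmless for fixed $m$, but it must serve both $m$ and $m+1$. Stein's universal operator handles this, and its vector-valued version follows from the basis argument above.
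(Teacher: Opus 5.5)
Your proposal is correct and follows essentially the same route as the paper. Both arguments combine a single Stein extension operator, lifted to $\mathcal H$-valued functions and bounded on both $H^m$ and $H^{m+1}$, with Lemma~\ref{lem:hilbert-valued-interpolation-explicit} on $\mathbb R^d$ and the definition of the restriction norm. Your orthonormal-basis lifting and your check that restriction and intrinsic norms agree at integer orders make explicit what the paper covers with the phrase ``applied in the same way to Hilbert-space-valued functions.''
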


\begin{proof}
By Stein's extension theorem for bounded Lipschitz domains
\cite[Theorem VI.5]{stein1970singular}, applied in the same way to
Hilbert-space-valued functions, there exists a
linear extension operator
\[
    E_\Omega:H^j(\Omega;\mathcal H)\to H^j(\mathbb R^d;\mathcal H),
    \qquad j=0,1,\ldots,m+1,
\]
such that $(E_\Omega f)|_\Omega=f$ and
\[
    \|E_\Omega f\|_{H^j(\mathbb R^d;\mathcal H)}
    \leq C_j\|f\|_{H^j(\Omega;\mathcal H)},
    \qquad j=0,1,\ldots,m+1.
\]

Fix $v\in H^{m+1}(\Omega;\mathcal H)$. Applying
Lemma~\ref{lem:hilbert-valued-interpolation-explicit} to $E_\Omega v$ on
$\mathbb R^d$, we obtain
\[
    \|E_\Omega v\|_{H^s(\mathbb R^d;\mathcal H)}
    \leq
    \|E_\Omega v\|_{H^m(\mathbb R^d;\mathcal H)}^{1-\theta}
    \|E_\Omega v\|_{H^{m+1}(\mathbb R^d;\mathcal H)}^\theta .
\]
Using the boundedness of $E_\Omega$ on $H^m$ and $H^{m+1}$, we get
\[
    \|E_\Omega v\|_{H^s(\mathbb R^d;\mathcal H)}
    \leq
    C_m^{1-\theta}C_{m+1}^{\theta}
    \|v\|_{H^m(\Omega;\mathcal H)}^{1-\theta}
    \|v\|_{H^{m+1}(\Omega;\mathcal H)}^\theta .
\]
Since $(E_\Omega v)|_\Omega=v$, we have $
    \|v\|_{H^s(\Omega;\mathcal H)}
    \leq
    \|E_\Omega v\|_{H^s(\mathbb R^d;\mathcal H)}$.
Thus the desired estimate holds with $C:=C_m^{1-\theta}C_{m+1}^{\theta}$.
\end{proof}
In the main theorem of this section, we construct a neural network that approximates a target function in anisotropic Barron space. 
The construction is built on the Hilbert-space sampling lemma, which guarantees the existence of finite samples that approximate a convex combination of infinite samples.
We recall the sampling lemma for the sake of completeness.
\begin{lem}[Hilbert-space sampling lemma]
\label{lem:hilbert-sampling}
Let $(E,\mathcal A,\lambda)$ be a probability space and let
$\mathcal{H}$ be a Hilbert space. 
Suppose that $Z:E\to \mathcal{H}$ is measurable and satisfies
$$
    \|Z(\omega)\|_{\mathcal H}\leq M
    \qquad\text{for }\lambda\text{-a.e. }\omega\in E.
$$
Set $\bar Z:=\mathbb E_\lambda[Z]\in\mathcal H$.
Then, for every $n\in\mathbb N$, there exist
$\omega_1,\dots,\omega_n\in E$ such that
\begin{align}\label{260720622}
    \left\|
        \bar Z-\frac1n\sum_{i=1}^n Z(\omega_i)
    \right\|_{\mathcal H}
    \leq
    \frac{M}{\sqrt n}.
\end{align}
\end{lem}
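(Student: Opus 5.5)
The plan is the classical Maurey argument: a variance computation for independent samples, followed by a first-moment (pigeonhole) selection. The case $M=0$ is trivial, since then $Z=0$ almost everywhere and $\bar Z=0$, so any choice of points works. We may therefore assume $M>0$.

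The first step is to make sense of $\bar Z$. Because $Z$ is measurable and essentially bounded by $M$, it is Bochner integrable on the probability space, and $\bar Z=\mathbb E_\lambda[Z]$ is well defined with $\|\bar Z\|_{\mathcal H}\le M$. If $\mathcal H$ is non-separable, we use that a Bochner-measurable $Z$ is essentially separably valued, so we may replace $\mathcal H$ by a separable closed subspace containing the essential range of $Z$ together with $\bar Z$. The defining property we need is
\[
\langle \bar Z,h\rangle_{\mathcal H}=\mathbb E_\lambda\langle Z,h\rangle_{\mathcal H}\qquad\text{for all } h\in\mathcal H.
\]

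The second step is the variance bound. Consider the product probability space $(E^n,\lambda^{\otimes n})$ with coordinates $\omega_1,\dots,\omega_n$, and put $Y_i:=Z(\omega_i)-\bar Z$. These are i.i.d. centred $\mathcal H$-valued random variables. Expanding the square gives
\[
\mathbb E\Bigl\|\frac1n\sum_{i=1}^n Y_i\Bigr\|_{\mathcal H}^2=\frac1{n^2}\sum_{i,j}\mathbb E\langle Y_i,Y_j\rangle_{\mathcal H}.
\]
For $i\ne j$ the cross terms vanish. To see this, use independence and Fubini to compute $\mathbb E\langle Y_i,Y_j\rangle=\mathbb E\langle Y_i,\mathbb E[Y_j]\rangle=0$, which is legitimate by the characterization of $\bar Z$ above. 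The diagonal terms satisfy
\[
\mathbb E\|Y_i\|^2=\mathbb E\|Z\|^2-\|\bar Z\|^2\le M^2.
\]
Summing, the left-hand side is at most $M^2/n$.

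The third step is the selection. A nonnegative random variable whose expectation is at most $M^2/n$ cannot exceed $M^2/n$ almost surely. Hence the set of $(\omega_1,\dots,\omega_n)$ with $\|\bar Z-\frac1n\sum_i Z(\omega_i)\|^2\le M^2/n$ has positive measure, and in particular it is nonempty. Any point of this set, for example one avoiding the null set where $\|Z\|>M$, gives \eqref{260720622} after taking square roots.

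The only delicate point is the vanishing of the cross terms. This requires justifying the interchange of expectation and inner product for Bochner integrals, which follows from Fubini on $E\times E$ applied to the bounded measurable function $\langle Z(\omega),Z(\omega')\rangle_{\mathcal H}$. Every other step is elementary.
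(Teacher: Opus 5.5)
Your proposal is correct and follows essentially the same route as the paper: an i.i.d.\ variance computation on $(E^n,\lambda^{\otimes n})$ giving $\mathbb E\|\bar Z-\frac1n\sum_{i=1}^n Z(\omega_i)\|_{\mathcal H}^2\le M^2/n$, followed by the observation that a nonnegative function cannot exceed its mean almost everywhere. Your additional care (Bochner integrability and separability, the identity $\mathbb E\|Z-\bar Z\|^2=\mathbb E\|Z\|^2-\|\bar Z\|^2$, and the Fubini justification for the vanishing cross terms) fills in details the paper leaves implicit but does not change the argument.
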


\begin{proof}
Let $Z_1,\dots,Z_n$ be independent random variables with values in
$\mathcal{H}$, each distributed according to the law of $Z$. 
Then $\mathbb E[Z_i]=\bar Z$ for each $i$, thus $\mathbb E\left[
        \bar Z-\frac1n\sum_{i=1}^n Z_i
    \right]$ equals zero.
We compute the second moment. 
Since the variables $Z_i-\bar Z$ are independent and mean zero in $\mathcal H$, the cross terms vanish:
$$
\begin{aligned}
    \mathbb E
    \left\|
        \frac1n\sum_{i=1}^n (Z_i-\bar Z)
    \right\|_{\mathcal H}^2
    &=
    \frac1{n^2}
    \sum_{i=1}^n
    \mathbb E\|Z_i-\bar Z\|_{\mathcal H}^2 =
    \frac1n
    \mathbb E\|Z-\bar Z\|_{\mathcal H}^2 .
\end{aligned}
$$
Moreover, $\mathbb E\|Z-\bar Z\|_{\mathcal H}^2\leq M^2$.
Therefore,
$$
    \mathbb E
    \left\|
        \bar Z-\frac1n\sum_{i=1}^n Z_i
    \right\|_{\mathcal H}^2
    \leq
    \frac{M^2}{n}.
$$
Set
$$
    X(\omega_1,\dots,\omega_n)
    :=
    \left\|
        \bar Z-\frac1n\sum_{i=1}^n Z(\omega_i)
    \right\|_{\mathcal H}^2 .
$$
The preceding computation gives $\mathbb{E}_{\lambda^{\otimes n}}[X]
    \leq
    \frac{M^2}{n}$.
Hence there exists at least one $(\omega_1,\dots,\omega_n)\in E^n$ such that $X(\omega_1,\dots,\omega_n)
    \leq
    \frac{M^2}{n}$.
Indeed, otherwise we would have
$$
    X(\omega_1,\dots,\omega_n)>\frac{M^2}{n}
    \quad
    \text{for } \lambda^{\otimes n}\text{-a.e. }(\omega_1,\dots,\omega_n),
$$
which would imply $\mathbb E_{\lambda^{\otimes n}}[X]>\frac{M^2}{n}$, a contradiction. 
Therefore \eqref{260720622} holds for these $\omega_1,\,\ldots,\,\omega_n$, which proves the lemma.
\end{proof}

In the proofs of the approximation theorems, we represent the target function as an expectation of random variables using the Fourier inversion formula and then approximate it by the sample mean of finite neural networks. 
Since the Fourier transform of the activation function depends on its periodicity, we separately consider the case of general activation and periodic activation.

\subsection{Non-periodic activation}
\label{26.07.22.11.26}
We first consider general activation functions. 
As outlined above, the approximation argument is based on a Fourier representation of the target function. 
To control the regularity and Fourier integrability in the construction, we assume the polynomial-decay condition in the spirit of \cite{siegel2020approximation}.

\begin{assumption}[$r$]
\label{26.05.17.19.46}
    We assume that the activation function $\sigma:\mathbb R\to\mathbb R$ belongs to $W_{\mathrm{loc}}^{r,1}(\mathbb{R})$.
    Moreover, there exist $\ell\in\bN$, $c_j,w_j,b_j\in\bR$, $p\in(1,\infty)$ and a constant $C_{\bar{\sigma},r,p}>0$ such that $\bar{\sigma}(z) = \sum_{j=1}^{\ell}c_j\sigma(w_jz+b_j)$ satisfies $\bar\sigma\not\equiv0$ and 
    \begin{equation}
    \label{26.05.17.19.58}
       |\bar{\sigma}^{(k)}\left(z\right)| \le C_{\bar{\sigma},r,p}\left(1+\left\vert z\right\vert\right)^{-p} \quad a.e.,
    \end{equation}
    for every integer $0\leq k\leq r$.
\end{assumption}

Since the approximation error is measured in a mixed Sobolev norm, we consider both temporal and spatial Sobolev derivatives of the neural network approximation.
Consequently, the activation function must possess sufficient regularity to accommodate the additional differentiations required in each direction. 
In the present anisotropic setting, however, this additional regularity requirement is not symmetric in the temporal and spatial variables; rather, they are determined separately by the anisotropic scaling between the corresponding Sobolev orders.
In the isotropic case, where the temporal and spatial regularities coincide, the result reduces to the classical Barron-space approximation estimates of \cite{siegel2020approximation}.

\begin{thm}\label{thm:barron-mixed-sobolev-approx}
	Let $T>0$, $\gamma>0$, $\alpha,\beta\geq0$, and let $\Omega\subset\mathbb{R}^d$ be a bounded Lipschitz domain.
    Suppose that $\beta=\alpha\gamma$ and that $\sigma$ satisfies Assumption~\ref{26.05.17.19.46} ($\lceil\max\{\alpha,\beta\}\rceil$).
    Then for any real-valued $u\in \mathcal{B}^{\alpha+\max\{1,1/\gamma\},\beta+\max\{1,\gamma\}}(T)$ and $n\in\bN$, there exists a neural network of the form
    $$
        u_n(t,x)=\sum_{i=1}^{\ell n}c_{i}^n\sigma\left(W_{t,i}^nt+W_{x,i}^n\cdot x+b_i^n\right),
        \qquad (t,x)\in[0,T]\times \Omega,
    $$
    where $c_i^n,W_{t,i}^n,b_i^n\in\bR$, $W_{x,i}^n\in\bR^d$ such that
    \begin{equation*}\begin{aligned}
        \|u-u_n\|_{H^\alpha((0,T);L^2(\Omega))}
        +&
        \|u-u_n\|_{L^2((0,T);H^\beta(\Omega))}\\
        &\leq
        C n^{-1/2}
        \|u\|_{\mathcal{B}^{\alpha+\max\left\{1,1/\gamma\right\},\beta+\max\{1,\gamma\}}(T)} .
    \end{aligned}\end{equation*}
    The constant $C$ is independent of $n$, and $u$, but may depend on $d$, $T$, $\Omega$, $\alpha$, $\beta$, $\gamma$, and $\sigma$.
\end{thm}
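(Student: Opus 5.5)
We follow the Siegel--Xu strategy: write $u$ as an expectation of ridge atoms built from $\bar\sigma$, bound each atom in the mixed Sobolev norms, and sample with Lemma~\ref{lem:hilbert-sampling}. The overview indicates the temporal and spatial atom bounds are proved separately (Lemma~\ref{lem:barron-mixed-sobolev-approx}) and then combined.

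\textbf{Step 1: representation.} Fix an extension $U$ of $u$ with $\|U\|\le 2\|u\|_{\cB^{\alpha+\max\{1,1/\gamma\},\beta+\max\{1,\gamma\}}(T)}$ and set $\zeta=(\tau,\xi)$. Since $\bar\sigma\in L^1$ is nonzero, there is $a\neq0$ with $\hat{\bar\sigma}(a)\neq0$. By Fourier inversion,
$$e^{\mathrm{i}\zeta\cdot(t,x)}=\frac{|a|}{2\pi\hat{\bar\sigma}(a)}\int_{\bR}\bar\sigma\bigl(\tfrac{a}{|a|^{?}}\cdots\bigr)\,\mathrm{d}b,$$
that is, $\int\bar\sigma(\lambda z+b)e^{-\mathrm{i}ab}\,\mathrm{d}b$ gives $e^{\mathrm{i}a\lambda z}$ up to a constant. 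Taking $\lambda$ so that $a\lambda=|\zeta|$ and $z=\zeta\cdot(t,x)/|\zeta|$, this expresses $e^{\mathrm{i}\zeta\cdot(t,x)}$ through shifted ridges. The $b$-integral is not over a probability measure, so we follow Siegel--Xu: on the bounded set $[0,T]\times\Omega$ we truncate $b$ to $|b|\lesssim |\zeta|\,\mathrm{diam}$ plus a tail. The decay \eqref{26.05.17.19.58} with $p>1$ makes the tail integrable and controls the derivatives. Together with the real part and the low-frequency piece near $\zeta=0$ (handled with the weight $1$), this gives
$$u=\int g(\zeta,b)\,\bar\sigma(\omega_\zeta\cdot(t,x)+b)\,\mathrm{d}\mu,\qquad \int|g|\,\mathrm{d}\mu\lesssim\int w(\zeta)|\hat U|,$$
for a suitable weight $w$. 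Normalizing $|g|\,\mathrm{d}\mu$ to a probability measure $\lambda$ makes $u=\|g\|_{L^1}\,\mathbb E_\lambda[Z]$, where $Z$ is a signed, normalized ridge.

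\textbf{Step 2: atom bounds.} Take $\mathcal H=H^\alpha((0,T);L^2(\Omega))\times L^2((0,T);H^\beta(\Omega))$. For integer orders, $\partial_t^k\bar\sigma(\cdot)$ produces $|\tau|^k$ and $\partial_x^\eta$ produces $|\xi|^{|\eta|}$; the $L^2$ norm on the bounded cylinder of $\bar\sigma^{(k)}$ of a ridge is bounded using \eqref{26.05.17.19.58}. The size of the shift $b$ costs one extra power of frequency. Fractional orders follow from Corollary~\ref{cor:lipschitz-domain-sobolev-interpolation} with $\mathcal H=L^2(\Omega)$ in time and Lemma~\ref{lem:hilbert-valued-interpolation-explicit} in space. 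The resulting atom norm is $\lesssim (1+|\tau|)^{\lceil\alpha\rceil}(\ldots)$. Under $\beta=\alpha\gamma$, the mixed products $|\tau|^i|\xi|^j$ are reduced by Young's inequality to $1+|\tau|^{\alpha+\max\{1,1/\gamma\}}+|\xi|^{\beta+\max\{1,\gamma\}}$. The extra $\max\{1,1/\gamma\}$ in time and $\max\{1,\gamma\}$ in space absorb the one-power loss from the $b$-truncation in the direction $\zeta/|\zeta|$, which mixes $\tau$ and $\xi$ anisotropically. Thus $M\lesssim\|U\|_{\cB^{\ldots}}$ after absorbing the weight into $g$.

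\textbf{Step 3: sampling.} Apply Lemma~\ref{lem:hilbert-sampling} with $n$ samples. Each sample is a combination of $\ell$ neurons $\sigma(w_j(\cdot)+b_j)$, giving $\ell n$ neurons, and the bound is $n^{-1/2}$ times the norm. Taking real parts keeps the network real.

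\textbf{Main obstacle.} The hardest step is Step 2: the anisotropic accounting. The ridge direction $\zeta/|\zeta|$ is isotropic, so a time derivative of order $\alpha$ yields $|\tau|^\alpha$, multiplied by the isotropic factor $|\zeta|$ coming from the shift range. Converting $|\tau|^\alpha|\zeta|$ and $|\xi|^\beta|\zeta|$ into the stated anisotropic weights requires Young's inequality with exponents tied to $\gamma$. This is exactly where $\beta=\alpha\gamma$ is needed. I would first check the bound $|\tau|^\alpha|\xi|\lesssim|\tau|^{\alpha+1/\gamma}+|\xi|^{\alpha\gamma+1}$, which follows since $\frac{\alpha}{\alpha+1/\gamma}+\frac{1}{\alpha\gamma+1}=1$.
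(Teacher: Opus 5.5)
Your proposal is correct and follows essentially the same route as the paper. It uses the same ridge representation via $\mathcal F_1[\bar\sigma](a)\neq0$, and your ``truncation plus tail'' plays the role of the paper's weight $h=(1+(|b|-A)_+)^{-p}$, whose $b$-integral costs one power of $1+T|\tau|+R_\Omega|\xi|$. The remaining steps also match: chain-rule atom bounds with interpolation, Young's inequality under $\beta=\alpha\gamma$ for the two mixed terms $|\tau|^\alpha|\xi|$ and $|\tau||\xi|^\beta$, and a single sampling in the mixed Hilbert space with density proportional to $(2+|\tau|^\alpha+|\xi|^\beta)\,h\,|\mathcal F_{d+1}[u]|$.

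The only slips are cosmetic. The displayed formula in Step 1 is garbled; it should read $e^{\mathrm{i}(t\tau+x\cdot\xi)}=\mathcal F_1[\bar\sigma](a)^{-1}\int_{\mathbb R}\bar\sigma\bigl((t\tau+x\cdot\xi)/a+b\bigr)e^{-\mathrm{i}ab}\,\mathrm{d}b$, which follows by a change of variables rather than Fourier inversion. Also, the spatial fractional interpolation on $\Omega$ requires Corollary~\ref{cor:lipschitz-domain-sobolev-interpolation}, which uses the extension operator, rather than the whole-space lemma.
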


Theorem~\ref{thm:barron-mixed-sobolev-approx} above will be proved after establishing the following lemma, which identifies the additional Barron regularity required to control the time and space Sobolev approximation errors separately.
Its proof also develops the corresponding componentwise sampling arguments. Theorem~\ref{thm:barron-mixed-sobolev-approx} is then proved by combining these arguments through a single sampling measure that controls both errors simultaneously.

\begin{lem}
\label{lem:barron-mixed-sobolev-approx}
Suppose that assumptions in Theorem~\ref{thm:barron-mixed-sobolev-approx} hold.

\begin{enumerate}
    \item For any real-valued $u\in \mathcal{B}^{\alpha+\max\{1,1/\gamma\},\,\beta+1}(T)$, there exists a sequence of two-layer neural networks of the form
$$
    \overline u_n(t,x)
    =
    \sum_{i=1}^{\ell n}
    \overline c_i^n
    \sigma
    \left(
       \overline  W_{t,i}^nt+\overline W_{x,i}^n\cdot x+\overline b_i^n
    \right),
    \qquad (t,x)\in[0,T]\times\Omega,
$$
where $\overline c_i^n,\overline W_{t,i}^n,\overline b_i^n\in\mathbb R$ and $\overline W_{x,i}^n\in\mathbb R^d$, such that
    \begin{equation}
    \label{eq:approx-time-sobolev}
        \|u-\overline u_n\|_{H^\alpha((0,T);L^2(\Omega))}
        \leq
        C n^{-1/2}
        \|u\|_{\mathcal{B}^{\alpha+\max\{1,1/\gamma\},\,\beta+1}(T)} .
    \end{equation}

    \item For any real-valued $u\in B^{\alpha+1,\,\beta+\max\{1,\gamma\}}(T)$, there exists a sequence of two-layer neural networks of the form
$$
        \widetilde u_n(t,x)
        =
        \sum_{i=1}^{\ell n}
        \widetilde{c}_i^n
        \sigma
        \left(
            \widetilde{W}_{t,i}^nt+\widetilde{W}_{x,i}^n\cdot x+\widetilde{b}_i^n
        \right),
        \qquad (t,x)\in[0,T]\times\Omega,
$$
    where $\widetilde{c}_i^n,\widetilde{W}_{t,i}^n,\widetilde{b}_i^n\in\mathbb R$ and $\widetilde{W}_{x,i}^n\in\mathbb R^d$, such that
    \begin{equation}
    \label{eq:approx-space-sobolev}
        \|u-\widetilde u_n\|_{L^2((0,T);H^\beta(\Omega))}
        \leq
        C n^{-1/2}
        \|u\|_{\mathcal{B}^{\alpha+1,\,\beta+\max\{1,\gamma\}}(T)} .
    \end{equation}
\end{enumerate}

The constant $C$ is independent of $n$ and $u$, but may depend on $d$, $T$, $\Omega$, $\alpha$, $\beta$, and $\sigma$.
\end{lem}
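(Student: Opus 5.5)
We follow the Siegel–Xu route: write $u$ as an integral of ridge atoms of $\bar\sigma$, view it as an expectation of Hilbert-valued random atoms, bound each atom uniformly in the target norm, and conclude with Lemma~\ref{lem:hilbert-sampling}.

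\textbf{Representation.} Take an extension $U\in\cB^{\alpha',\beta'}(\bR^{d+1})$ with norm at most $2\|u\|$. Here $(\alpha',\beta')$ is the relevant pair of indices in part (1) or part (2). Put $\hat U=\cF_{d+1}[U]$ and $\zeta=(\tau,\xi)$. Since $\bar\sigma\in L^1(\bR)$ by \eqref{26.05.17.19.58} and $\bar\sigma\not\equiv0$, we may fix $a>0$ with $\hat{\bar\sigma}(a)\neq0$.

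For $\zeta\neq0$, substituting $z=a\,\zeta\cdot(t,x)/|\zeta|\cdot s+b$ (more precisely, scaling by $|\zeta|/a$) gives
\[
\mathrm{e}^{\mathrm{i}\zeta\cdot(t,x)}=\frac{1}{\hat{\bar\sigma}(a)}\int_{\bR}\bar\sigma\Bigl(\tfrac{|\zeta|}{a}\cdot\tfrac{\zeta\cdot(t,x)}{|\zeta|}+b\Bigr)\mathrm{e}^{-\mathrm{i}ab/|\zeta|}\ldots\,\dd b .
\]
We use the simpler normalized form: for $w=\zeta/a$,
\[
\int_{\bR}\bar\sigma(w\cdot y+b)\,\mathrm{e}^{-\mathrm{i}ab}\,\dd b=\mathrm{e}^{\mathrm{i}\zeta\cdot y}\,\hat{\bar\sigma}(a).
\]

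Inserting this into Fourier inversion and taking real parts, since $u$ is real, yields
\[
U(y)=\int_{\bR^{d+1}}\int_{\bR}\mathrm{Re}\Bigl[\tfrac{\hat U(\zeta)\mathrm{e}^{-\mathrm{i}ab}}{(2\pi)^{d+1}\hat{\bar\sigma}(a)}\Bigr]\,\bar\sigma(\zeta\cdot y/a+b)\,\dd b\,\dd\zeta .
\]
The $b$-integral is not absolutely convergent. To fix this we localize: on the bounded set $[0,T]\times\Omega$ we have $|\zeta\cdot y|/a\le R|\zeta|$. We therefore truncate $b$ to $|b|\le R|\zeta|/a+$ (something) and control the tail using the decay $p>1$, as in \cite{siegel2020approximation}. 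Concretely, the tail $\int_{|b|>K}|\bar\sigma(\cdot+b)|$ is $O(K^{1-p})$, and letting $K\to\infty$ the truncated integrals converge in the Sobolev norms.

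The resulting density in $(\zeta,b)$ has total mass $\lesssim\int(1+|\zeta|)|\hat U|$. This is where the extra order $\max\{1,1/\gamma\}$ in time, respectively $\max\{1,\gamma\}$ in space, enters. The length of the $b$-window is $\propto 1+|\zeta|\le 1+|\tau|+|\xi|$. By Young's inequality with the scaling $\beta=\alpha\gamma$, the weight $|\tau|^\alpha|\xi|$ is controlled by $|\tau|^{\alpha+1/\gamma}+|\xi|^{\beta+1}$ in part (1), and $|\xi|^\beta|\tau|$ is controlled by $|\tau|^{\alpha+1}+|\xi|^{\beta+\gamma}$ in part (2).

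\textbf{Atom bounds.} In part (1) take $\mathcal H=H^\alpha((0,T);L^2(\Omega))$; in part (2) take $\mathcal H=L^2((0,T);H^\beta(\Omega))$. For an integer $k\le\lceil\alpha\rceil$, the atom $g(t,x)=\bar\sigma(w_t t+w_x\cdot x+b)$ satisfies
\[
\|\partial_t^k g\|_{L^2}\le |w_t|^k\,C\,|\Omega|^{1/2}T^{1/2}.
\]
Fractional orders $\alpha$ follow from Corollary~\ref{cor:lipschitz-domain-sobolev-interpolation} (with $\mathcal H=L^2(\Omega)$), giving the bound $(1+|w_t|)^\alpha$. The same argument in $x$, using Corollary~\ref{cor:lipschitz-domain-sobolev-interpolation} on $\Omega$ with $\mathcal H$ scalar and integrated in $t$, gives $(1+|w_x|)^\beta$ for part (2). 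The constant depends on $d$ only through the Stein extension.

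\textbf{Sampling.} Normalize the weight $|\hat U|(1+|\zeta|)\mathbf 1_{\text{window}}$ into a probability measure $\lambda$. Then $u=\mathbb E_\lambda[Z]$, where $Z=M\cdot(\text{sign factor})\cdot g/(\text{weight})$, and for part (1)
\[
\|Z\|_{\mathcal H}\lesssim \frac{(1+|\tau|)^\alpha}{\ldots}\ \text{ weighted so that } M\lesssim\|u\|_{\cB^{\alpha+\max\{1,1/\gamma\},\beta+1}(T)}.
\]
To get a genuinely bounded $Z$, we instead put the Sobolev weight $(1+|\tau|)^\alpha(1+|\zeta|)$ into the measure itself; its total mass is bounded by the Barron norm via the Young computation above. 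Lemma~\ref{lem:hilbert-sampling} then gives $n$ atoms with error $\le M n^{-1/2}$. Each atom of $\bar\sigma$ expands into $\ell$ neurons of $\sigma$, which gives the width $\ell n$. Part (2) is identical with the weight $(1+|\xi|)^\beta(1+|\zeta|)$.

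\textbf{Main obstacle.} The main obstacle is the representation step: making the $b$-integral rigorous and uniformly bounded given only polynomial decay, and checking that the window length $\sim|\zeta|$ costs exactly one extra order. This order must then be split anisotropically as $1/\gamma$ or $1$, and $\gamma$ or $1$, through the Young inequality.
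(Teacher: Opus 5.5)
There is a genuine gap, and it sits in the step you flag as the main obstacle. You normalize $|\hat U|\,(1+|\tau|)^\alpha\,\mathbf{1}_{\text{window}}$ into a probability measure and assert $u=\mathbb{E}_\lambda[Z]$. With a window $|b|\le R|\zeta|/a+K$, this identity fails by the tail $r_K$ coming from $|b|>R|\zeta|/a+K$. Your own estimate shows that $\|r_K\|_{\mathcal H}$ is of order $K^{1-p}$ times a moment of $\hat U$, and this does not decay in $n$.

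Letting $K\to\infty$ does not rescue it. The total mass $M$ of the measure grows like $K$, so the sampling error $Mn^{-1/2}$ from Lemma~\ref{lem:hilbert-sampling} blows up. With Lebesgue measure on all of $\bR$ in $b$ there is no probability measure at all. Balancing $K^{1-p}$ against $Kn^{-1/2}$ gives only the rate $n^{-(p-1)/(2p)}$, not $n^{-1/2}$.

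The $b$-integral itself is absolutely convergent for each fixed $(t,x)$, since $\bar\sigma\in L^1(\bR)$. The real difficulty is to find a finite, $(t,x)$-independent sampling measure with uniformly bounded atoms. Relatedly, your atom bounds use only $\|\bar\sigma^{(k)}\|_{L^\infty}$, so the decay hypothesis \eqref{26.05.17.19.58} on the derivatives never enters where it is actually needed.

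The missing idea is to put a polynomially decaying importance weight in $b$ into the measure, instead of an indicator. Set $A(\tau,\xi):=(T|\tau|+R_\Omega|\xi|)/|a|$ and $h(\tau,\xi,b):=\bigl(1+(|b|-A(\tau,\xi))_+\bigr)^{-p}$. On $[0,T]\times\Omega$ one has $|(t\tau+x\cdot\xi)/a+b|\ge(|b|-A)_+$. Hence \eqref{26.05.17.19.58} gives $|\bar\sigma^{(k)}((t\tau+x\cdot\xi)/a+b)|\le C\,h(\tau,\xi,b)$ for all $k\le r$, uniformly in $(t,x)$. Meanwhile $p>1$ gives $\int_{\bR}h\,\dd b\lesssim 1+T|\tau|+R_\Omega|\xi|$, which is the true source of the extra order.

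Now take $\lambda\propto(1+|\tau|^\alpha)\,h\,|\hat U|\,\dd b\,\dd\xi\,\dd\tau$ and $Z\propto I_\alpha(1+|\tau|^\alpha)^{-1}h^{-1}\mathrm{e}^{\mathrm{i}\Theta}\bar\sigma(\cdot)$. Then:
\begin{itemize}
\item the identity $u=\mathbb{E}_\lambda[Z]$ is exact, with no truncation or limit;
\item the mass $I_\alpha$ is controlled by your Young-inequality computation;
\item $|\partial_t^kZ|\lesssim I_\alpha|\tau|^k/(1+|\tau|^\alpha)$, which after interpolation yields $\esssup\|Z\|_{H^\alpha((0,T);L^2(\Omega))}\lesssim I_\alpha$.
\end{itemize}
Part (2) is the same with the weight $1+|\xi|^\beta$.

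With this replacement, the rest of your outline matches the paper's proof: Young's inequality using $\beta=\alpha\gamma$, Corollary~\ref{cor:lipschitz-domain-sobolev-interpolation}, sampling, taking real parts, and the $\ell n$ neurons.
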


\begin{proof}
It suffices to construct an approximating network with $n$ hidden
units and activation function $\bar{\sigma}(z)=\sum_{j=1}^{\ell}c_j\sigma(w_jz+b_j)$ in Assumption~\ref{26.05.17.19.46} instead of $\ell n$ hidden
units and activation function $\sigma$.
Thus, throughout the proof, we construct an $n$-term
network with activation function $\bar{\sigma}$.

Since $\bar{\sigma}\in L^1(\mathbb{R})$ and $\bar{\sigma}\not\equiv0$, there exists
$a\in\mathbb R\setminus\{0\}$ such that
$$
    \mathcal{F}_1[\bar{\sigma}](a)
    :=
    \int_{\mathbb{R}}\bar{\sigma}(z)\mathrm{e}^{-\mathrm{i}az}\,\mathrm{d}z
    \neq0.
$$
For every $y\in\mathbb{R}$, the change of variables $z=y+b$ gives
$$
    \int_{\mathbb R}\bar{\sigma}(y+b)\mathrm{e}^{-\mathrm{i}ab}\,\mathrm{d}b
    =
    \mathrm{e}^{\mathrm{i}ay}\mathcal{F}_1[\bar{\sigma}](a).
$$
Hence, for every $t,\tau\in\mathbb R$ and $x,\xi\in\mathbb R^d$,
\begin{equation}
\label{eq:plane-wave-ridge-rep}
    \mathrm{e}^{\mathrm{i}(t\tau+x\cdot\xi)}
    =
    \frac{1}{\mathcal{F}_1[\bar{\sigma}](a)}
    \int_{\mathbb{R}}
    \bar{\sigma}\left(
        \frac{t\tau+x\cdot\xi}{a}+b
    \right)\mathrm{e}^{-\mathrm{i}ab}\,\mathrm{d}b .
\end{equation}

    \textbf{Proof of (1).}
By the definition of the restriction norm, we can choose an extension to $\mathbb R^{1+d}$, still denoted by $u$, such that
$$
    \|u\|_{\mathcal{B}^{\alpha+\max\left\{1,1/\gamma\right\},\beta+1}(\mathbb R^{1+d})}
    \leq
    2\|u\|_{\mathcal{B}^{\alpha+\max\left\{1,1/\gamma\right\},\beta+1}(T)}.
$$
By Fourier inversion and \eqref{eq:plane-wave-ridge-rep},
\begin{align}
\label{eq:u-ridge-rep}
    u(t,x)
    &=
    C_{\bar{\sigma}}
    \int_{\mathbb R}\int_{\mathbb R^d}\int_{\mathbb R}
    \mathcal{F}_{d+1}[u](\tau,\xi)
    \bar{\sigma}\left(
        \frac{t\tau+x\cdot\xi}{a}+b
    \right)\mathrm{e}^{-\mathrm{i}ab}
    \,\mathrm{d}b\mathrm{d}\xi\mathrm{d}\tau ,
\end{align}
where $C_{\bar{\sigma}}:=(2\pi)^{-(d+1)}\mathcal{F}_{1}[\bar{\sigma}](a)^{-1}$.

Let $R_\Omega:=\sup_{x\in\Omega}|x|<\infty$ and define $A(\tau,\xi):=\frac{T|\tau|+R_\Omega|\xi|}{|a|}$.
For $t\in[0,T]$, $x\in\Omega$, we have
\begin{equation}
\label{eq:h_inequality}
    \left|
        \frac{t\tau+x\cdot\xi}{a}+b
    \right|
    \geq
    \bigl(|b|-A(\tau,\xi)\bigr)_+.
\end{equation}
Set $h(\tau,\xi,b)
    :=
    \left(
        1+\bigl(|b|-A(\tau,\xi)\bigr)_+
    \right)^{-p}$.
We define
$$
    I_\alpha
    :=
    \int_{\mathbb R}\int_{\mathbb R^d}\int_{\mathbb R}
    (1+|\tau|^\alpha)
    h(\tau,\xi,b)
    |\mathcal{F}_{d+1}[u](\tau,\xi)|
    \,\mathrm{d}b\mathrm{d}\xi\mathrm{d}\tau .
$$
If $I_\alpha=0$, then $u=0$ a.e. and the claim is trivial. 
Hence, we assume $I_\alpha>0$.

Since $p>1$,
$$
    \int_{\mathbb R}h(\tau,\xi,b)\,\mathrm{d}b
    \leq
    C_{p,a}
    \left(
        1+T|\tau|+R_\Omega|\xi|
    \right).
$$
Therefore
\begin{align*}
    I_\alpha
    &\leq
    C
    \int_{\mathbb R}\int_{\mathbb R^d}
    (1+|\tau|^\alpha)
    (1+|\tau|+|\xi|)
    |\mathcal{F}_{d+1}[u](\tau,\xi)|
    \,\mathrm{d}\xi\mathrm{d}\tau .
\end{align*}
If $\alpha>0$, then, since $\beta=\alpha\gamma$, Young's inequality gives
$|\tau|^\alpha|\xi|
    \leq
    |\tau|^{\alpha+1/\gamma}
    +
    |\xi|^{\beta+1}$.
For $\alpha=0$, hence $\beta=0$, the same inequality is immediate.
Also, 
$$
|\tau|+|\tau|^\alpha+|\tau|^{\alpha+1}
    \leq
    3\left(1+|\tau|^{\alpha+\max\{1,1/\gamma\}}\right)
    $$
    and $|\xi|
    \leq
    1+|\xi|^{\beta+1}$.
Hence
$$
    (1+|\tau|^\alpha)(1+|\tau|+|\xi|)
    \leq
    C
    \left(
        1+|\tau|^{\alpha+\max\left\{1,1/\gamma\right\}}+|\xi|^{\beta+1}
    \right).
$$
Consequently,
\begin{equation}
\label{eq:I-alpha-bound}
    I_\alpha
    \leq
    C
    \|u\|_{\cB^{\alpha+\max\left\{1,1/\gamma\right\},\beta+1}(\mathbb R^{1+d})}
    \leq
    C
    \|u\|_{\cB^{\alpha+\max\left\{1,1/\gamma\right\},\beta+1}(T)} .
\end{equation}

Now define a probability measure $\lambda_\alpha$ on
$\mathbb R\times\mathbb R^d\times\mathbb R$ by
$$
    \lambda_\alpha(\mathrm{d}\tau,\mathrm{d}\xi,\mathrm{d}b)
    :=
    \frac1{I_\alpha}
    (1+|\tau|^\alpha)
    h(\tau,\xi,b)
    |\mathcal{F}_{d+1}[u](\tau,\xi)|
    \,\mathrm{d}b\mathrm{d}\xi\mathrm{d}\tau .
$$
Writing $C_{\bar{\sigma}}\mathcal{F}_{d+1}[u](\tau,\xi)\mathrm{e}^{-\mathrm{i}ab}
    =
    |C_{\bar{\sigma}}|
    |\mathcal{F}_{d+1}[u](\tau,\xi)|
    \mathrm{e}^{\mathrm{i}\Theta(\tau,\xi,b)}$ and 
\begin{align}\label{260726338}
\Theta(\tau,\xi,b):=\mathrm{Arg}\cF_{d+1}\left[u\right]\left(\tau,\xi\right) + \mathrm{Arg} C_{\bar{\sigma}}-ab,
\end{align}
we rewrite \eqref{eq:u-ridge-rep} as $u(t,x)
    =
    \mathbb E_{\lambda_\alpha}
    \left[
        Z_{\tau,\xi,b}(t,x)
    \right]$,
where
$$
    Z_{\tau,\xi,b}(t,x)
    :=
    J_\alpha(\tau,\xi,b)
    \mathrm{e}^{\mathrm{i}\Theta(\tau,\xi,b)}
    \bar{\sigma}\left(
        \frac{t\tau+x\cdot\xi}{a} + b
    \right)
$$
and $J_\alpha(\tau,\xi,b)
    :=
    |C_{\bar{\sigma}}|I_\alpha
    (1+|\tau|^\alpha)^{-1}
    h(\tau,\xi,b)^{-1}$.
Here and below, we set $\operatorname{Arg}0:=0$.

We now estimate $Z_{\tau,\xi,b}$ in $H^\alpha((0,T);L^2(\Omega))$.
By Assumption~\ref{26.05.17.19.46}, $\bar\sigma^{(k)}\in L^\infty(\mathbb R)$ for every $0\leq k\leq r$. 
Since $\bar\sigma\in W_{\mathrm{loc}}^{r,1}(\mathbb R)$, it follows that $\bar\sigma\in W^{r,\infty}(\mathbb R)$.
Using \eqref{eq:h_inequality}, the definition of $h$, and
\eqref{26.05.17.19.58}, we obtain, for
$\lambda_\alpha$-a.e. $(\tau,\xi,b)$ and every integer
$0\leq k\leq\lceil\alpha\rceil$,
\begin{equation}
\label{eq:sigma-derivative-h-bound}
    \left|
    \bar{\sigma}^{(k)}
    \left(
        \frac{t\tau+x\cdot\xi}{a}+b
    \right)
    \right|
    \leq
    C_{\bar{\sigma},r,p}h(\tau,\xi,b)
\end{equation}
for a.e. $(t,x)\in(0,T)\times\Omega$.
Therefore, the Sobolev chain rule and
\eqref{eq:sigma-derivative-h-bound} yield, for
$\lambda_\alpha$-a.e. $(\tau,\xi,b)$ and every
$0\leq k\leq\lceil\alpha\rceil$,
\begin{align}
\label{eq:Z-derivative-bound}
    \left|
    \partial_t^k Z_{\tau,\xi,b}(t,x)
    \right|
    &\leq
    C
    I_\alpha
    \frac{|\tau|^k}{1+|\tau|^\alpha}
\end{align}
for a.e. $(t,x)\in(0,T)\times\Omega$.
Using the equivalence between the integer-order norms $H^k((0,T);L^2(\Omega))$ and $W^{k,2}((0,T);L^2(\Omega))$, we integrate \eqref{eq:Z-derivative-bound} over $(0,T)\times\Omega$ and sum over $k=0,\ldots,\lceil\alpha\rceil$ to obtain
\begin{equation}
\label{eq:Z-Hell-bound}
    \|Z_{\tau,\xi,b}\|_{H^{\lceil\alpha\rceil}((0,T);L^2(\Omega))}
    \leq
    C
    I_\alpha
    \frac{1+|\tau|^{\lceil\alpha\rceil}}{1+|\tau|^\alpha}
\end{equation}
for $\lambda_\alpha$-a.e. $(\tau,\xi,b)$.
Here, the constant $C$ is independent of $(\tau,\xi,b)$.

If $\alpha\in\bN\cup \{0\}$, then $\alpha=\lceil\alpha\rceil$ and \eqref{eq:Z-Hell-bound} gives directly$\|Z_{\tau,\xi,b}\|_{H^\alpha((0,T);L^2(\Omega))}
    \leq
    C I_\alpha$.
Set $m:=\lfloor\alpha\rfloor$ and $\theta:=\alpha-m\in[0,1)$.
Then $\lfloor\alpha\rfloor+1 = \lceil{\alpha}\rceil$ and $\alpha=(1-\theta)\lfloor\alpha\rfloor+\theta\lceil{\alpha}\rceil$.
By the same argument,
\begin{equation}
\label{eq:Z-Hellminus-bound}
    \|Z_{\tau,\xi,b}\|_{H^{\lfloor\alpha\rfloor}((0,T);L^2(\Omega))}
    \leq
    C
    I_\alpha
    \frac{1+|\tau|^{\lfloor\alpha\rfloor}}{1+|\tau|^\alpha}
\end{equation}
for $\lambda_\alpha$-a.e. $(\tau,\xi,b)$.
By Corollary~\ref{cor:lipschitz-domain-sobolev-interpolation}, applied with
$\mathcal H=L^2(\Omega)$,
$$
    \|v\|_{H^\alpha((0,T);L^2(\Omega))}
    \leq
    C_{\lfloor\alpha\rfloor,T}^{1-\theta}C_{\lceil{\alpha}\rceil,T}^{\theta}
    \|v\|_{H^{\lfloor\alpha\rfloor}((0,T);L^2(\Omega))}^{1-\theta}
    \|v\|_{H^{\lceil{\alpha}\rceil}((0,T);L^2(\Omega))}^{\theta}.
$$
Therefore, using \eqref{eq:Z-Hell-bound} and
\eqref{eq:Z-Hellminus-bound},
\begin{align}\label{260726428}
\begin{aligned}
    \|Z_{\tau,\xi,b}\|_{H^\alpha((0,T);L^2(\Omega))}
    &\leq
    C_{\lfloor\alpha\rfloor,T}^{1-\theta}C_{\lceil{\alpha}\rceil,T}^{\theta}
    C I_\alpha
    \frac{
        (1+|\tau|^{\lfloor\alpha\rfloor})^{1-\theta}
        (1+|\tau|^{\lceil{\alpha}\rceil})^\theta
    }
    {1+|\tau|^\alpha} \\
    &\leq
    C
    C_{\lfloor\alpha\rfloor,T}^{1-\theta}C_{\lceil{\alpha}\rceil,T}^{\theta}
    I_\alpha 
\end{aligned}
\end{align}
Hence
\begin{equation}
\label{eq:Z-Halpha-uniform-bound}
    \esssup_{\tau,\xi,b}
    \|Z_{\tau,\xi,b}\|_{H^\alpha((0,T);L^2(\Omega))}
    \leq
    C_\alpha
    I_\alpha,
\end{equation}
where the essential supremum is taken with respect to $\lambda_\alpha$, and 
$$
C_\alpha
    :=
    C\max\left\{
        1,\,
        C_{\lfloor\alpha\rfloor,T}^{1-\theta}C_{\lceil{\alpha}\rceil,T}^{\theta}
    \right\}.
$$

We now apply Lemma~\ref{lem:hilbert-sampling} with $\mathcal H:=H^\alpha((0,T);L^2(\Omega))$ and $\lambda=\lambda_{\alpha}$.
Since $u=\mathbb E_{\lambda_\alpha}[Z_{\tau,\xi,b}]\in\mathcal{H}$, and since \eqref{eq:Z-Halpha-uniform-bound} holds, by Lemma~\ref{lem:hilbert-sampling}, there exist $(\tau_i,\xi_i,b_i)_{i=1}^n
    \subset
    \mathbb R\times\mathbb R^d\times\mathbb R$
such that
$$
    \left\|
        u-\frac1n\sum_{i=1}^n
        Z_{\tau_i,\xi_i,b_i}
    \right\|_{H^\alpha((0,T);L^2(\Omega))}
    \leq
    C n^{-1/2}I_\alpha.
$$

Finally, if $u$ is real-valued, we take the real part and define
$$
    \overline u_n(t,x)
    :=\frac{1}{n}\sum_{i=1}^n\operatorname*{Re}[Z_{\tau_i,\xi_i,b_i}(t,x)].
$$
Then $\overline u_n$ is a real-valued two-layer neural network of the form
$$
    \overline u_n(t,x)
    =
    \sum_{i=1}^n
    \overline c_i^n
    \bar{\sigma}
    \left(
       \overline  W_{t,i}^nt+\overline W_{x,i}^n\cdot x+\overline b_i^n
    \right),
$$
with
$$
    \overline c_i^n
    :=
    \frac1n
    J_\alpha(\tau_i,\xi_i,b_i)
    \cos\Theta(\tau_i,\xi_i,b_i),
    \qquad
    \overline W_{t,i}^n:=\frac{\tau_i}{a},
    \qquad
    \overline W_{x,i}^n:=\frac{\xi_i}{a},
    \qquad
    \overline b_i^n:=b_i.
$$
By the reduction at the beginning of the proof, $\overline u_n$ can be rewritten as a two-layer neural network with activation function $\sigma$ and $\ell n$ hidden units.
Moreover, using \eqref{eq:I-alpha-bound}, we get
$$
\|u-\overline u_n\|_{H^\alpha((0,T);L^2(\Omega))} \leq Cn^{-1/2} \|u\|_{\mathcal{B}^{\alpha+\max\{1,1/\gamma\},\,\beta+1}(T)}. 
$$ 
This proves the first assertion.

\medskip
\noindent
\textbf{Proof of (2).}
Let $u\in\mathcal{B}^{\alpha+1,\beta+\max\{1,\gamma\}}(T)$.
As in the proof of the first assertion, we choose an extension of $u$ to $\mathbb{R}^{1+d}$, still denoted by $u$, such that
$$
    \|u\|_{\mathcal{B}^{\alpha+1,\beta+\max\{1,\gamma\}}(\mathbb R^{1+d})}
    \leq
    2\|u\|_{\mathcal{B}^{\alpha+1,\beta+\max\{1,\gamma\}}(T)}.
$$
Define
$$
    I_\beta
    :=
    \int_{\mathbb R}\int_{\mathbb R^d}\int_{\mathbb{R}}
    (1+|\xi|^\beta)
    h(\tau,\xi,b)
    |\mathcal{F}_{d+1}[u](\tau,\xi)|
    \,\mathrm{d}b\mathrm{d}\xi\mathrm{d}\tau .
$$
If $I_\beta=0$, then $u=0$ a.e. and there is nothing to prove. 
Hence, we assume $I_\beta>0$.
Since $p>1$, as in the first part,
$$
    I_\beta
    \leq
    C
    \int_{\mathbb R}\int_{\mathbb R^d}
    (1+|\xi|^\beta)
    (1+|\tau|+|\xi|)
    |\mathcal{F}_{d+1}[u](\tau,\xi)|
    \,\mathrm{d}\xi\mathrm{d}\tau .
$$
We claim that
$$
    (1+|\xi|^\beta)(1+|\tau|+|\xi|)
    \leq
    C_{\alpha,\beta,\gamma}
    \left(
        1+|\tau|^{\alpha+1}
        +|\xi|^{\beta+\max\{1,\gamma\}}
    \right).
$$
Indeed, the only mixed term is $|\tau||\xi|^\beta$. 
Since $\beta=\alpha\gamma$, Young's inequality gives, for $\alpha>0$,
$$
    |\tau||\xi|^\beta
    \leq
    \frac1{\alpha+1}|\tau|^{\alpha+1}
    +
    \frac{\alpha}{\alpha+1}
    |\xi|^{\beta(\alpha+1)/\alpha}
    =
    \frac1{\alpha+1}|\tau|^{\alpha+1}
    +
    \frac{\alpha}{\alpha+1}
    |\xi|^{\beta+\gamma}.
$$
The remaining terms are bounded by $1+|\tau|^{\alpha+1}+|\xi|^{\beta+1}$.
Thus the claim follows from $\max\{\beta+1,\beta+\gamma\}
    =
    \beta+\max\{1,\gamma\}$.
The case $\alpha=0$, hence $\beta=0$, is immediate.
Consequently,
\begin{equation}
\label{eq:I-beta-bound}
    I_\beta
    \leq
    C
    \|u\|_{\cB^{\alpha+1,\,\beta+\max\{1,\gamma\}}(\mathbb R^{1+d})}
    \leq
    C
    \|u\|_{\cB^{\alpha+1,\,\beta+\max\{1,\gamma\}}(T)} .
\end{equation}

Now define a probability measure $\lambda_\beta$ on
$\mathbb R\times\mathbb R^d\times\mathbb R$ by
$$
    \lambda_\beta(\mathrm{d}\tau,\mathrm{d}\xi,\mathrm{d}b)
    :=
    \frac1{I_\beta}
    (1+|\xi|^\beta)
    h(\tau,\xi,b)
    |\mathcal{F}_{d+1}[u](\tau,\xi)|
    \,\mathrm{d}b\mathrm{d}\xi\mathrm{d}\tau .
$$
As in the proof of the first assertion, we may write $u(t,x)=
    \mathbb E_{\lambda_\beta}
    \left[
        Z_{\tau,\xi,b}(t,x)
    \right]$,
where
$$
    Z_{\tau,\xi,b}(t,x)
    :=
    J_\beta(\tau,\xi,b)
    \mathrm{e}^{\mathrm{i}\Theta(\tau,\xi,b)}
    \bar{\sigma}
    \left(
        \frac{t\tau+x\cdot\xi}{a}+b
    \right),
$$
$J_\beta(\tau,\xi,b)
    :=
    |C_{\bar{\sigma}}|I_\beta
    (1+|\xi|^\beta)^{-1}
    h(\tau,\xi,b)^{-1}$, and $\Theta(\tau,\xi,b)$ is defined by \eqref{260726338}
with the present extension $u$.
    
We repeat the argument used in the proof of the first assertion, replacing the weight $1+|\tau|^\alpha$ by $1+|\xi|^\beta$ and the time derivatives by the spatial derivatives. 
Since $\bar{\sigma}\in W^{r,\infty}(\mathbb R)$, the Sobolev chain rule gives, for $\lambda_\beta$-a.e. $(\tau,\xi,b)$ and every multi-index $\nu$ with $|\nu|\leq\lceil\beta\rceil$,
\begin{align}\label{eq:Z-derivative-bound_xx}
    |D_x^\nu Z_{\tau,\xi,b}(t,x)|
    \leq
    C I_\beta
    \frac{|\xi|^{|\nu|}}{1+|\xi|^\beta}
\end{align}
for a.e. $(t,x)\in(0,T)\times\Omega$.
Using the equivalence $H^\ell(\Omega)=W^{\ell,2}(\Omega)$ with equivalent norms for integer $\ell$, together with
$$
\sum_{|\nu|\leq\ell}|\xi^\nu|^2
\leq
\sum_{j=0}^{\ell}|\xi|^{2j}
\leq
C_\ell(1+|\xi|^{2\ell}),
$$
we obtain, for every integer
$0\leq\ell\leq\lceil\beta\rceil$ and for a.e. $t\in(0,T)$,
Consequently, for every integer $0\leq \ell\leq\lceil\beta\rceil$ and for a.e. $t\in(0,T)$,
$$
\|Z_{\tau,\xi,b}(t,\cdot)\|_{H^\ell(\Omega)}
\leq
CI_\beta
\frac{1+|\xi|^\ell}{1+|\xi|^\beta}.
$$
Interpolating between the orders $\lfloor\beta\rfloor$ and
$\lceil\beta\rceil$, and then integrating in time, yields 
\begin{equation}
\label{eq:Z-L2Hbeta-uniform}
    \operatorname*{ess\,sup}_{\tau,\xi,b}
    \|Z_{\tau,\xi,b}\|_{L^2((0,T);H^\beta(\Omega))}
    \leq
    C I_\beta,
\end{equation}
where the essential supremum is taken with respect to $\lambda_\beta$.

    Applying Lemma~\ref{lem:hilbert-sampling} in $\mathcal H=L^2((0,T);H^\beta(\Omega))$, with $\lambda=\lambda_\beta$, and using \eqref{eq:Z-L2Hbeta-uniform}, we obtain points $(\tau_i,\xi_i,b_i)_{i=1}^n
    \subset \mathbb R\times\mathbb R^d\times\mathbb R$
such that
$$
    \left\|
        u-\frac1n\sum_{i=1}^n
        Z_{\tau_i,\xi_i,b_i}
    \right\|_{L^2((0,T);H^\beta(\Omega))}
    \leq
    C n^{-1/2} I_\beta
    \leq
    C n^{-1/2}
    \|u\|_{\cB^{\alpha+1,\,\beta+\max\{1,\gamma\}}(T)} .
$$
The last inequality follows from \eqref{eq:I-beta-bound}.

Finally, assume that $u$ is real-valued.
Taking the real part of the sampled approximation and using that the real-part map is a contraction on $L^2((0,T);H^\beta(\Omega))$, we set
$$
    \widetilde u_n(t,x)
    :=
    \operatorname*{Re}\left(
    \frac1n\sum_{i=1}^n Z_{\tau_i,\xi_i,b_i}(t,x)
    \right)
    =
    \sum_{i=1}^n
    \widetilde{c}_i^n\bar{\sigma}(\widetilde{W}_{t,i}^nt+\widetilde{W}_{x,i}^n\cdot x+\widetilde{b}_i^n),
$$
where
$$
    \widetilde c_i^n
    :=
    \frac1n
    J_\beta(\tau_i,\xi_i,b_i)
    \cos\Theta(\tau_i,\xi_i,b_i),
    \qquad
    \widetilde W_{t,i}^n:=\frac{\tau_i}{a},
    \qquad
    \widetilde W_{x,i}^n:=\frac{\xi_i}{a},
    \qquad
    \widetilde b_i^n:=b_i.
$$
By the reduction at the beginning of the proof, $\widetilde u_n$ can be rewritten as a two-layer neural network with activation function $\sigma$ and $\ell n$ hidden units. Moreover,
$$
    \|u-\widetilde u_n\|_{L^2((0,T);H^\beta(\Omega))}
    \leq
    Cn^{-1/2}
    \|u\|_{\mathcal{B}^{\alpha+1,\,\beta+\max\{1,\gamma\}}(T)}.
$$
This proves the second assertion.
\end{proof}

\medskip

\begin{proof}[Proof of Theorem~\ref{thm:barron-mixed-sobolev-approx}]
By the definition of the restriction norm, we choose an extension of $u$
to $\mathbb R^{1+d}$, still denoted by $u$, such that
$$
    \|u\|_{\mathcal B^{          \alpha+\max\{1,1/\gamma\},\beta+\max\{1,\gamma\}}(\mathbb R^{1+d})}
    \leq
    2\|u\|_{\mathcal B^{\alpha+\max\{1,1/\gamma\},
            \beta+\max\{1,\gamma\}}(T)}.
$$
Let $a$, $C_{\bar\sigma}$, $A$, and $h$ be as in the proof of Lemma~\ref{lem:barron-mixed-sobolev-approx}, and define $\Theta$ by \eqref{260726338} using the present
extension $u$.
Set
$$
\begin{aligned}
    I
    &:=
    \int_{\mathbb R}
    \int_{\mathbb R^d}
    \int_{\mathbb R}
    \left(
        2+|\tau|^\alpha+|\xi|^\beta
    \right)
    h(\tau,\xi,b)
    |\mathcal F_{d+1}[u](\tau,\xi)|
    \dd b\dd \xi\dd \tau.
\end{aligned}
$$
If $I=0$, then $u=0$ a.e., and the claim is trivial.
Hence, assume that $I>0$.

The estimates leading to \eqref{eq:I-alpha-bound} and
\eqref{eq:I-beta-bound} give
\begin{equation}
\label{eq:I-mixed-bound}
    I
    \leq
    C
    \|u\|_{
        \mathcal B^{
            \alpha+\max\{1,1/\gamma\},
            \,
            \beta+\max\{1,\gamma\}
        }(T)
    }.
\end{equation}
Define a probability measure $\lambda$ on
$\mathbb R\times\mathbb R^d\times\mathbb R$ by
$$
    \lambda(\mathrm d\tau,\mathrm d\xi,\mathrm db)
    :=
    \frac1I
    \left(
        2+|\tau|^\alpha+|\xi|^\beta
    \right)
    h(\tau,\xi,b)
    |\mathcal F_{d+1}[u](\tau,\xi)|
    \dd b\dd \xi\dd \tau.
$$
Set
$$
    Z_{\tau,\xi,b}(t,x)
    :=
    J(\tau,\xi,b)
    \mathrm e^{\mathrm i\Theta(\tau,\xi,b)}
    \bar{\sigma}
    \left(
        \frac{t\tau+x\cdot\xi}{a}+b
    \right),
$$
where $J(\tau,\xi,b)
    :=
    |C_{\bar{\sigma}}|I
    \left(
        2+|\tau|^\alpha+|\xi|^\beta
    \right)^{-1}
    h(\tau,\xi,b)^{-1}$, and $\Theta(\tau,\xi,b)$ is defined by \eqref{260726338} with the present
extension $u$. 
The ridge representation then yields $
    u(t,x)
    =
    \mathbb E_\lambda
    \left[
        Z_{\tau,\xi,b}(t,x)
    \right]$.

Consider the Hilbert space
 \begin{align}\label{260726508}
\begin{gathered}
    \mathcal H
    :=
    H^\alpha((0,T);L^2(\Omega))
    \cap
    L^2((0,T);H^\beta(\Omega)),\\
    \|v\|_{\mathcal H}^2
    :=
    \|v\|_{H^\alpha((0,T);L^2(\Omega))}^2
    +
    \|v\|_{L^2((0,T);H^\beta(\Omega))}^2.
\end{gathered}
 \end{align}
Since $2+|\tau|^\alpha+|\xi|^\beta
    \ge
    1+|\tau|^\alpha$ and $2+|\tau|^\alpha+|\xi|^\beta
    \ge
    1+|\xi|^\beta$, 
    the derivative and interpolation estimates used in the proof of
Lemma~\ref{lem:barron-mixed-sobolev-approx} (see \eqref{eq:Z-derivative-bound}, \eqref{260726428}, and \eqref{eq:Z-derivative-bound_xx}) yield 
\begin{equation}
\label{eq:Z-mixed-uniform-bound}
    \operatorname*{ess\,sup}_{\tau,\xi,b}
    \|Z_{\tau,\xi,b}\|_{\mathcal H}
    \leq
    CI,
\end{equation}
where the essential supremum is taken with respect to $\lambda$.

Therefore, by Lemma~\ref{lem:hilbert-sampling}, there exist $(\tau_i,\xi_i,b_i)_{i=1}^n
    \subset
    \mathbb R\times\mathbb R^d\times\mathbb R$ such that
$$
    \left\|
        u-\frac1n\sum_{i=1}^n Z_{\tau_i,\xi_i,b_i}
    \right\|_{\mathcal H}
    \leq
    C n^{-1/2} I .
$$

Since $u$ is real-valued, we take the real part of the sampled approximation.
The real-part map is a contraction in both Sobolev norms, and hence the same
estimate holds for the real-valued two-layer neural network
$$
    u_n(t,x)
    :=\sum_{i=1}^n\operatorname{Re}\left[\frac 1 n Z_{\tau_i,\xi_i,b_i}\right]=
    \sum_{i=1}^n
    c_i^n\bar \sigma(W_{t,i}^nt+W_{x,i}^n\cdot x+b_i^n),
$$
where
$$
     c_i^n
    :=
    \frac1n
    J(\tau_i,\xi_i,b_i)
    \cos\Theta(\tau_i,\xi_i,b_i),
    \qquad
     W_{t,i}^n:=\frac{\tau_i}{a},
    \qquad
     W_{x,i}^n:=\frac{\xi_i}{a},
    \qquad
     b_i^n:=b_i.
$$
By the definition of $\bar{\sigma}$, the same function $u_n$ can be
rewritten as a two-layer neural network with activation function $\sigma$
and $\ell n$ hidden units. Moreover, by the definition of $\|\cdot\|_{\cH}$ and \eqref{eq:I-mixed-bound},
\begin{align*}
    &\|u- u_n\|_{H^\alpha((0,T);L^2(\Omega))}+\|u- u_n\|_{L^2((0,T);H^\beta(\Omega))}
    \\
    \leq\,&Cn^{-1/2}
    \|u\|_{\mathcal{B}^{\alpha+\max\{1,1/\gamma\},\,\beta+\max\{1,\gamma\}}(T)}.
\end{align*}
This proves the theorem.
\end{proof}

\subsection{Periodic activation}
\label{26.07.22.11.27}
We now prove the periodic-activation case. 
Following the Fourier-series approach of~\cite{siegel2020approximation}, periodic activations allow us to obtain the mixed Sobolev approximation estimate without the polynomial-decay assumption on the activation and without the additional Barron regularity required in Theorem~\ref{thm:barron-mixed-sobolev-approx}.

\begin{thm}\label{thm:periodic-activation-barron-approx}
    Let $T>0$, $\alpha,\beta\geq0$ and $\Omega\subset\mathbb{R}^d$ be a  bounded Lipschitz domain.
    Suppose that $\sigma\in W^{\lceil \max\{\alpha,\beta\}\rceil,\infty}(\mathbb{R})$ is a non-constant periodic function.
    Then for any real-valued $u\in \mathcal{B}^{\alpha,\beta}(T)$, there exists a sequence of two-layer neural networks of the form
    $$
        u_n(t,x)
        =
        \sum_{i=1}^n
        c_i^n
        \sigma
        \left(
            W_{t,i}^nt+W_{x,i}^n\cdot x+b_i^n
        \right),
        \qquad (t,x)\in[0,T]\times\Omega,
    $$
    where $c_i^n,W_{t,i}^n,b_i^n\in\bR$, $W_{x,i}^n\in\bR^d$ such that
    \begin{equation}
    \label{eq:periodic-activation-barron-approx}
        \|u-u_n\|_{H^\alpha((0,T);L^2(\Omega))}
        +
        \|u-u_n\|_{L^2((0,T);H^\beta(\Omega))}
        \leq
        C  n^{-1/2}
        \|u\|_{\mathcal{B}^{\alpha,\beta}(T)} .
    \end{equation}
    Here, the constant $C$ is independent of $n$ and $u$, but may depend on $T,\Omega,\alpha,\beta$, and $\sigma$.
\end{thm}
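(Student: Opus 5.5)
Since $\sigma$ is periodic and non-constant, I will work with its Fourier series. Let $P>0$ be its period. Because $\sigma$ is non-constant, some coefficient with $k\neq0$ is nonzero:
$$\hat\sigma_k:=\frac1P\int_0^P\sigma(z)\mathrm e^{-2\pi\mathrm i kz/P}\,\mathrm dz\neq0 .$$
Set $\omega:=2\pi k/P\neq0$. Averaging over a period in the shift $b$ gives
$$\frac1P\int_0^P\sigma(y+b)\mathrm e^{-\mathrm i\omega b}\,\mathrm db=\hat\sigma_k\,\mathrm e^{\mathrm i\omega y}.$$
With $y=(t\tau+x\cdot\xi)/\omega$ this yields the plane-wave representation
$$\mathrm e^{\mathrm i(t\tau+x\cdot\xi)}=\frac{1}{P\hat\sigma_k}\int_0^P\sigma\Big(\frac{t\tau+x\cdot\xi}{\omega}+b\Big)\mathrm e^{-\mathrm i\omega b}\,\mathrm db .$$
This is the periodic analogue of \eqref{eq:plane-wave-ridge-rep}. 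The key gain is that the $b$-integral runs over the compact set $[0,P]$ with bounded weight. Hence no decay function $h$ appears, and no extra Barron regularity is lost.

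Next I fix an extension $u\in\cB^{\alpha,\beta}(\bR^{d+1})$ with norm at most $2\|u\|_{\cB^{\alpha,\beta}(T)}$. Fourier inversion then writes $u(t,x)$ as a triple integral over $(\tau,\xi,b)\in\bR\times\bR^d\times[0,P]$. I define the probability measure
$$\lambda(\mathrm d\tau,\mathrm d\xi,\mathrm db):=I^{-1}(2+|\tau|^\alpha+|\xi|^\beta)|\cF_{d+1}[u](\tau,\xi)|\,\mathrm d\tau\,\mathrm d\xi\,\frac{\mathrm db}{P},$$
with normalizing constant $I\le 2\|u\|_{\cB^{\alpha,\beta}(\bR^{d+1})}$. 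The random atoms are
$$Z_{\tau,\xi,b}(t,x)=J(\tau,\xi)\,\mathrm e^{\mathrm i\Theta(\tau,\xi,b)}\,\sigma\Big(\frac{t\tau+x\cdot\xi}{\omega}+b\Big),\qquad J=\frac{I}{(2\pi)^{d+1}|\hat\sigma_k|(2+|\tau|^\alpha+|\xi|^\beta)}.$$
Here $\Theta$ collects the phases of $\cF_{d+1}[u]$, of $\hat\sigma_k^{-1}$, and $-\omega b$. Then $u=\mathbb E_\lambda[Z]$ holds pointwise on $[0,T]\times\Omega$, and also as a Bochner integral in the Hilbert space $\mathcal H$ of \eqref{260726508}.

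The main step is the uniform bound $\|Z_{\tau,\xi,b}\|_{\mathcal H}\le CI$. For integers $k\le\lceil\alpha\rceil$, the chain rule and $\sigma\in W^{\lceil\max\{\alpha,\beta\}\rceil,\infty}$ give
$$|\partial_t^kZ|\le J\,\|\sigma^{(k)}\|_\infty\,|\tau/\omega|^k .$$
Integrating over the bounded set $(0,T)\times\Omega$ bounds the $H^{k}((0,T);L^2(\Omega))$ norm by $C\,I(1+|\tau|^k)/(1+|\tau|^\alpha)$. Interpolating between $\lfloor\alpha\rfloor$ and $\lceil\alpha\rceil$ via Corollary~\ref{cor:lipschitz-domain-sobolev-interpolation} then gives a bound $CI$, exactly as in \eqref{260726428}. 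The spatial norm is handled the same way: spatial derivatives of order $\nu$ give factors $|\xi|^{|\nu|}$, and I interpolate in $H^\beta(\Omega)$ for a.e.\ $t$ before integrating in time.

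The anticipated obstacle is purely bookkeeping, since both weights are dominated by $2+|\tau|^\alpha+|\xi|^\beta$. This domination holds with no mixed-term Young inequality. That is precisely why the hypothesis $\beta=\alpha\gamma$ from Theorem~\ref{thm:barron-mixed-sobolev-approx} is unnecessary here.

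Finally, Lemma~\ref{lem:hilbert-sampling} provides samples $(\tau_i,\xi_i,b_i)_{i=1}^n$ with $\|u-\frac1n\sum_iZ_{\tau_i,\xi_i,b_i}\|_{\mathcal H}\le CIn^{-1/2}$. Since $u$ is real-valued and taking the real part is a contraction on $\mathcal H$, I set
$$u_n:=\operatorname{Re}\frac1n\sum_iZ_{\tau_i,\xi_i,b_i}.$$
This is a network with $n$ units and parameters
$$c_i^n=\frac1n J\cos\Theta,\qquad W_{t,i}^n=\frac{\tau_i}{\omega},\qquad W_{x,i}^n=\frac{\xi_i}{\omega},\qquad b_i^n=b_i.$$
Combining the sampling estimate with $I\lesssim\|u\|_{\cB^{\alpha,\beta}(T)}$ yields \eqref{eq:periodic-activation-barron-approx}.
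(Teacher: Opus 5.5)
Your proposal is correct and follows the paper's proof essentially step for step. The shared ingredients are the Fourier-coefficient ridge representation $\mathrm e^{\mathrm i\omega y}=(P\hat\sigma_k)^{-1}\int_0^P\sigma(y+b)\mathrm e^{-\mathrm i\omega b}\,\mathrm db$ (the paper's $k_0$ is your $\omega$), the probability measure on $\mathbb R\times\mathbb R^d\times[0,P]$ weighted by the Barron weight times $|\mathcal F_{d+1}[u]|$, the chain-rule and interpolation bounds on the atoms in $\mathcal H$, and Lemma~\ref{lem:hilbert-sampling} followed by taking real parts. The only differences are cosmetic: your weight $2+|\tau|^\alpha+|\xi|^\beta$ replaces the paper's $1+|\tau|^\alpha+|\xi|^\beta$, which costs at most a factor of $2$, and you should state the trivial case $I=0$ (there the measure is undefined, but $u=0$).
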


\begin{proof}
By the definition of the restricted Barron norm, choose an extension of $u$ to $\mathbb R^{1+d}$, still denoted by $u$, such that $
    \|u\|_{\mathcal{B}^{\alpha,\beta}(\mathbb R^{1+d})}
    \leq
    2\|u\|_{\mathcal{B}^{\alpha,\beta}(T)}$.
Let $P>0$ be a period of $\sigma$.
Since $\sigma$ is non-constant, there exists
$k_0\in\big(\frac{2\pi}{P}\mathbb Z\big)\setminus\{0\}$ such that
$$
    a_{k_0}
    :=
    \frac1P
    \int_0^P
    \sigma(b)\mathrm e^{-\mathrm ik_0b}
    \dd b
    \neq0.
$$
For every $y\in\mathbb R$, periodicity gives
$$
\mathrm e^{\mathrm ik_0 y}
    =
    \frac1{Pa_{k_0}}
    \int_0^P
    \sigma
    \left(y+b
    \right)
    \mathrm e^{-\mathrm i{k_0}b}
    \dd b.
$$
Taking $y=(t\tau+x\cdot\xi)/k_0$, Fourier inversion gives
\begin{align}
\label{eq:periodic-ridge-rep-u}
        u(t,x)
    &=
    C_\sigma
    \int_{\mathbb R}
    \int_{\mathbb R^d}
    \int_0^P
    \mathcal F_{d+1}[u](\tau,\xi)
    \sigma
    \left(
        \frac{t\tau+x\cdot\xi}{{k_0}}+b
    \right)
    \mathrm e^{-\mathrm i{k_0}b}
    \dd b\dd\xi\dd\tau,
\end{align}
where $
C_\sigma
    :=
    (2\pi)^{-(d+1)}(Pa_{k_0})^{-1}$.

Set $w(\tau,\xi):=1+|\tau|^\alpha+|\xi|^\beta$ and $I:=
    \int_{\mathbb R}\int_{\mathbb R^d}
    w(\tau,\xi)
    |\mathcal F_{d+1}[u](\tau,\xi)|
    \,\mathrm d\xi\,\mathrm d\tau$.
Then
\begin{equation}
\label{eq:periodic-I-bound}
    I
    \leq
    C
    \|u\|_{\mathcal{B}^{\alpha,\beta}(T)} .
\end{equation}
If $I=0$, then $u=0$ a.e., and the assertion is trivial. 
Hence, assume that $I>0$.

Define a probability measure $\lambda$ on
$\mathbb R\times\mathbb R^d\times[0,P]$ by
$$
    \lambda(\mathrm d\tau,\mathrm d\xi,\mathrm db)
    :=
    \frac1{P I}
    w(\tau,\xi)
    |\mathcal F_{d+1}[u](\tau,\xi)|
    \,\mathrm db\,\mathrm d\xi\,\mathrm d\tau .
$$
Define
\begin{align*}
    \begin{gathered}
    J(\tau,\xi):=P|C_\sigma| w(\tau,\xi)^{-1}I,\qquad \Theta(\tau,\xi,b)
    :=
    \operatorname{Arg}\mathcal F_{d+1}[u](\tau,\xi)
    +
    \operatorname{Arg}C_\sigma
    -
    {k_0}b,\\
     Z_{\tau,\xi,b}(t,x)
    :=
    J(\tau,\xi)
    \mathrm e^{\mathrm i\Theta(\tau,\xi,b)}
    \sigma
    \left(
        \frac{t\tau+x\cdot\xi}{{k_0}}+b
    \right).
    \end{gathered}
\end{align*}
Then by \eqref{eq:periodic-ridge-rep-u}, $u=\mathbb E_\lambda[Z_{\tau,\xi,b}]$.

Consider the Hilbert space $\mathcal H$ in \eqref{260726508}.
Since
$\sigma\in W^{\lceil\max\{\alpha,\beta\}\rceil,\infty}(\mathbb R)$, the Sobolev chain rule gives, for $\lambda$-a.e.
$(\tau,\xi,b)$ and for all $0\le j\le\lceil\alpha\rceil$ and $|\nu|\le\lceil\beta\rceil$,
$$
|\partial_t^j Z_{\tau,\xi,b}(t,x)|
\leq
CI\frac{|\tau|^j}{w(\tau,\xi)},
\qquad
|D_x^\nu Z_{\tau,\xi,b}(t,x)|
\leq
CI\frac{|\xi|^{|\nu|}}{w(\tau,\xi)}
$$
for a.e. $(t,x)\in(0,T)\times\Omega$,
where $C$ may depend on $\alpha$, $\beta$, and $\sigma$.

Since $w(\tau,\xi)\ge 1+|\tau|^\alpha$ and $w(\tau,\xi)\ge 1+|\xi|^\beta$, the derivative and interpolation arguments used in the proof of
Lemma~\ref{lem:barron-mixed-sobolev-approx} (see \eqref{eq:Z-derivative-bound}, \eqref{260726428}, and \eqref{eq:Z-derivative-bound_xx}) yield
\begin{equation}
\label{eq:periodic-atom-bound}
    \operatorname*{ess\,sup}_{\tau,\xi,b}
    \|Z_{\tau,\xi,b}\|_{\mathcal H}
    \leq
    CI,
\end{equation}
where the essential supremum is taken with respect to $\lambda$.

Applying Lemma~\ref{lem:hilbert-sampling} in $\mathcal H$, we obtain
points $(\tau_i,\xi_i,b_i)_{i=1}^n
    \subset
    \mathbb R\times\mathbb R^d\times[0,P]$
such that
$$
    \left\|
        u-\frac1n\sum_{i=1}^n
        Z_{\tau_i,\xi_i,b_i}
    \right\|_{\mathcal H}
    \leq
    Cn^{-1/2}I.
$$
Since $u$ is real-valued, we take the real part of the sampled approximation.
The real-part map is a contraction in both Sobolev norms, and hence the same
estimate holds for the real-valued two-layer neural network
$$
    u_n(t,x)
    :=\sum_{i=1}^n\operatorname{Re}\left[\frac 1 n Z_{\tau_i,\xi_i,b_i}\right]=
    \sum_{i=1}^n
    c_i^n \sigma(W_{t,i}^nt+W_{x,i}^n\cdot x+b_i^n),
$$
where
$$
     c_i^n
    :=
    \frac1n
    J(\tau_i,\xi_i)
    \cos\Theta(\tau_i,\xi_i,b_i),
    \qquad
     W_{t,i}^n:=\frac{\tau_i}{k_0},
    \qquad
     W_{x,i}^n:=\frac{\xi_i}{k_0},
    \qquad
     b_i^n:=b_i.
$$
By the definition of $\|\cdot\|_{\cH}$ and \eqref{eq:periodic-I-bound},
\begin{align*}
    &\|u- u_n\|_{H^\alpha((0,T);L^2(\Omega))}+\|u- u_n\|_{L^2((0,T);H^\beta(\Omega))}
    \leq Cn^{-1/2}
    \|u\|_{\mathcal{B}^{\alpha,\,\beta}(T)}.
\end{align*}
This proves the theorem.
\end{proof}

\section*{Acknowledgements}
\pdfbookmark{Acknowledgements}{Acknowledgements}
J.-H. Choi has been supported by a KIAS Individual Grant (MG102701) at Korea Institute for Advanced Study.
H. Lim has been supported by KIAS Individual Grant (AP109701) via the Center for Artificial Intelligence and Natural Sciences at Korea Institute for Advanced Study (KIAS). 
J. Seo has been supported by a KIAS Individual Grant (MG095802) at Korea Institute for Advanced Study and by the National Research Foundation of Korea (NRF) grant funded by the Korea government (Ministry of Science and ICT) (No. RS-2026-25477288).
Y.-J. Sim has been supported by a KIAS Individual Grant (MG108801) at Korea Institute for Advanced Study.
C. Song has been supported by Basic Science Research Programs through the National Research Foundation of Korea (NRF) funded by the Ministry of Education (RS-2024-00462755).
\vspace{1em}

\section*{Declaration of interest}
The authors declare no competing interests.

\end{document}